\documentclass[10pt]{article}

\title{Optimal Routing across Constant Function Market Makers with Gas Fees
}
\author{C. Escudero\thanks{Departamento de Matem\'aticas Fundamentales,
Universidad Nacional de Educaci\'on a Distancia, Madrid, Spain.
E-mail: cescudero@mat.uned.es, ORCID-ID: 0000-0001-9144-9097.}
\and
F. Lara\thanks{Instituto de Alta Investigaci\'on (IAI), Universidad
de Tarapac\'a, Arica, Chile. E-mail: felipelaraobreque@gmail.com;
flarao@academicos.uta.cl. Web: www.felipelara.cl, ORCID-ID: 0000-0002-9965-0921.}
\and
M. Sama\thanks{Departamento de Matem\'atica Aplicada I,
Universidad Nacional de Educaci\'on a Distancia, Madrid, Spain.
E-mail: msama@ind.uned.es.}
}

\date{\small \emph{\today}}
\usepackage{graphicx}
\usepackage{amsmath}
\usepackage{subcaption}
\usepackage{amsthm}
\usepackage{amssymb}
\usepackage{multicol}
\usepackage{array}
\usepackage[dvipsnames,svgnames]{xcolor}

\numberwithin{equation}{section}

\newcommand{\BE}{\begin{equation}}
\newcommand{\EE}{\end{equation}}
\newcommand{\BEQ}{\begin{eqnarray}}
\newcommand{\EEQ}{\end{eqnarray}}
\newcommand{\BD}{\begin{displaystyle}}
\newcommand{\ED}{\end{displaystyle}}
\newcommand{\BC}{\begin{center}}
\newcommand{\EC}{\end{center}}
\newtheorem{thr}{Theorem}[section]
\newtheorem{defn}{Definition}[section]
\newtheorem{pr}{Proposition}[section]

\newtheorem{ex}{Example}[section]

\def\cone{\operatornamewithlimits{cone}}

\def\cone{\operatornamewithlimits{cone}}

\newtheorem{remark}{Remark}[section]

\begin{document}

\maketitle

\begin{abstract}

\noindent {\bf Abstract.}
We study the optimal routing problem in decentralized exchanges built on Constant Function Market Makers when trades can be split across multiple heterogeneous pools and execution incurs fixed on-chain costs (gas fees). While prior routing formulations typically abstract from fixed activation costs, real on-chain execution presents non-negligible gas fees. They also become convex under concavity/convexity assumptions on the invariant functions. We propose a general optimization framework that allows differentiable invariant functions beyond global convexity and incorporates fixed gas fees through a mixed-integer model that induces activation thresholds.  Subsequently, we introduce a relaxed formulation of this model, whereby we deduce necessary optimality conditions, obtaining an explicit Karush-Kuhn-Tucker system that links prices, fees, and activation. We further establish sufficient optimality conditions using tools from generalized convexity (pseudoconcavity/pseudoconvexity and quasilinearity), yielding a verifiable optimality characterization without requiring convex trade functions. Finally, we relate the relaxed solution to the original mixed-integer model by providing explicit approximation bounds that quantify the utility gap induced by relaxation. Our results extend the mathematical theory for routing by offering no-trade conditions in fragmented on-chain markets in the presence of gas fees.

\medskip

\noindent{\small \emph{Keywords}: Decentralized Finance; Constant Function Market Makers; Optimal routing;
Gas fees; Karush-Kuhn-Tucker conditions; Generalized convexity.}

\end{abstract}

\bigskip
\bigskip

\section{Introduction}

Decentralized Finance (DeFi) has emerged as a new paradigm for the provision of
financial services through permissionless and transparent blockchain-based
protocols \cite{gobet2023defi,TTK}. A core building block of DeFi markets are Automated Market Makers
(AMMs), which replace the traditional order book mechanism by on-chain
liquidity pools governed by deterministic pricing rules. Among the different
classes of AMMs, Constant Function Market Makers (CFMMs) have attracted
particular attention due to their mathematical tractability and economic
relevance. In a CFMM, the state of a liquidity pool is characterized by a vector
of reserves whose evolution is constrained by an invariant function, and asset
prices arise endogenously from the gradient of this invariant.

In mathematical terms, a CFMM maintains a set of reserves $(R^1, \ldots, R^n)$ of $n$ different assets in such a way that it keeps invariant a given function
\begin{equation}\nonumber
    \varphi(R^1, \ldots, R^n) = k,
\end{equation}
henceforth denoted as \emph{trade function}, which is usually assumed to be continuously differentiable and to possess a well-defined convexity. The constant $k>0$ is given by the initialization of the CFMM. Examples include the constant-product formula $\varphi(x,y) = xy$ used in Uniswap~v2 \cite{AZR} (for newer versions of Uniswap see \cite{ASZ,AZS,AZT}) and the constant-sum form $\varphi(x,y) = x + y$, which arises
as a theoretical benchmark for stablecoin exchanges \cite{egorov2019}. This structure fixes how exchanges between a user and the CFMM are performed. In particular, it permits an autonomous relative valuation of assets, giving an implicit pricing rule that is derived from the gradient $\nabla \varphi(R^1, \ldots, R^n)$, which determines marginal exchange rates. Hence the assumption of differentiability. Other examples of CFMMs include Balancer \cite{MM} and Curve \cite{egorov2019}.

The mathematical foundations of CFMMs have been rigorously developed in recent
years. In particular, Angeris, Chitra, and coworkers \cite{AT,angeris2022when,angeris2020analysis}
formalized CFMM trading as a convex optimization problem under suitable
regularity and convexity assumptions on the invariant function, providing
fundamental insights into pricing, arbitrage, and optimal trade execution.
Subsequent works extended this framework to multi-asset pools, routing across
multiple markets, and algorithmic execution strategies
\cite{OptRou-2022,CKK,diamandis2024algo}.

In realistic DeFi ecosystems, as these last three references outline, trading does not occur within a single
liquidity pool. Instead, traders face a fragmented market composed of
heterogeneous CFMMs, each characterized by its own invariant function, reserve
composition, and fee structure. Optimal execution may therefore require splitting a trade across several pools, giving rise to the \emph{optimal routing problem}. These previous formulations are based on convex optimization
techniques by imposing global convexity or concavity assumptions on
the invariant functions \cite{angeris2022constant,angeris2020analysis}. These assumptions restrict the admissible design
space of CFMMs and are not always economically or technologically justified. However, most existing theoretical results rely on convexity hypotheses because they guarantee global optimality \cite{HU-L}. This limits the applicability of the
analysis to a narrower class of invariant functions, but is something that can be circumvented by means of nonconvex optimization methods \cite{ELS-1}.

A second structural feature of decentralized markets that challenges standard models is the presence of fixed execution costs. On-chain trades incur
\emph{gas fees}, which must be paid whenever a transaction interacts with a
liquidity pool, independently of the traded volume. These costs fundamentally alter optimal routing decisions. From a financial perspective, gas fees expand no-trade
regions, suppress arbitrage opportunities, and interact with liquidity
fragmentation in non-trivial ways. Related phenomena have been discussed in the literature, including miner extractable value (MEV) and other inefficiencies in DeFi markets
\cite{capponi2021adoption,capponi2024maximal,capponi2023paradox}. However, despite their importance, fixed execution costs are often either neglected or
handled heuristically \cite{DA}, and their interaction with the
underlying mathematical structure of CFMMs remains insufficiently understood to the best of our knowledge.

The aim of this paper is to address these limitations by developing a general
optimization-theoretic framework for optimal routing across multiple
heterogeneous CFMMs that (i) allows differentiable invariant functions beyond
global convexity assumptions, and (ii) explicitly incorporates gas fees through
market activation variables and capacity constraints.

\subsection*{Our contribution}

Our contributions in this work can be summarized as follows.

\begin{itemize}
\item We formulate the optimal routing problem across multiple CFMMs in the presence of fixed gas fees and without convexity assumptions. This is done at the price of introducing market activation variables and
bounded trade constraints. The resulting model captures some essential features of on-chain execution while remaining amenable to rigorous mathematical analysis through the introduction of a relaxation approximation.

\item For the relaxed routing problem, we derive \emph{necessary optimality
conditions} in the form of a Karush-Kuhn-Tucker (KKT) system under a
Kurcyusz-Robinson-Zowe-type constraint qualification adapted to the structure
of CFMM trading constraints. This characterization explicitly links marginal
utilities, pool prices, fees, and activations.

\item Using tools from generalized convexity, we establish \emph{sufficient
optimality conditions} without assuming global convexity of the invariant functions. In particular, under pseudoconcavity of the utilities and
quasilinearity of the trade functions, the KKT conditions are shown to be both
necessary and sufficient for global optimality.

\item We analyze the relationship between the relaxed routing problem and the
original mixed-integer formulation with fixed activation costs. Explicit
approximation bounds are derived, quantifying the utility loss induced by the
relaxation and providing a rigorous interpretation of relaxed routing solutions.

\end{itemize}

Overall, our results extend the mathematical theory of CFMM routing beyond convex models and provide a mathematical framework for understanding the impact of gas fees on optimal trading and no-trade conditions in fragmented decentralized markets.

\section{Preliminaries}
 
We denote 
$\mathbb{R}_{+} := [0, + \infty[$ and $\mathbb{R}_{++} := \, ]0, + \infty[$, thus $\mathbb{R}^{n}_{+}:= (\mathbb{R}_{+})^{n}$ and $\mathbb{R}^{n}_{++}:= (\mathbb{R}_{++})^{n}$, respectively. We use the usual notations $\geq$ componentwise (see \cite{ehrgott2005multicriteria}), that is,
\begin{align*}
 & ~~ x \geq y ~ \Longleftrightarrow ~ x_{i} \geq y_{i}, \\
 & ~~ x \gneq y ~ \Longleftrightarrow ~ x_{i} \geq y_{i}, \, x \neq y, \\
 & ~~ x > y ~ \Longleftrightarrow ~ x_{i} > y_{i},
\end{align*}
for every $i=1,\ldots,n$.


Given any extended-valued function $h: \mathbb{R}^{n} \rightarrow
\overline{\mathbb{R}} := \mathbb{R} \cup \{\pm \infty\}$, 
it is indicated by ${\rm epi}\,h := \{(x,t) \in \mathbb{R}^{n} \times \mathbb{R}: h(x) \leq t\}$ the epigraph of $h$, by $S_{\lambda} (h) :=
\{x \in \mathbb{R}^{n}: h(x) \leq \lambda\}$ 
the sublevel 
set of $h$ at the height $\lambda \in \mathbb{R}$, by $W_{\lambda} (h) := \{x \in \mathbb{R}^{n}: h(x) \geq \lambda\}$ 
the upper level 
set of $h$ at the height $\lambda \in \mathbb{R}$, and by ${\rm argmin}_{\mathbb{R}^{n}} h$ the set 
of all minimal points of $h$ on $\mathbb{R}^{n}$.

A function $h$ with convex domain is said to be
\begin{itemize}
 \item[$(a)$] convex if, given any $x, y \in \mathrm{dom}\,h$, then
 \begin{equation}\label{def:convex}
  h(\lambda x + (1-\lambda)y) \leq \lambda h(x) + (1 - \lambda) h(y), ~ \forall ~ \lambda \in [0, 1], \notag
 \end{equation}

 \item[$(b)$] quasiconvex if, given any $x, y \in \mathrm{dom}\,h$, then
 \begin{equation}\label{def:qcx}
  h(\lambda x + (1-\lambda) y) \leq \max \{h(x), h(y)\}, ~ \forall ~ \lambda \in [0, 1], \notag
 \end{equation}
\end{itemize}
 Every convex function is quasiconvex, but the reverse statement does not hold as the function $h(x) = x^{3}$ shows. Recall that
 \begin{align*}
  h ~ \mathrm{is ~ convex} & \Longleftrightarrow \, \mathrm{epi}\,h ~\mathrm{is ~ a ~ convex ~ set;}\\ 
  h ~ \mathrm{is ~ quasiconvex} & \Longleftrightarrow \, S_{\lambda} (h) ~ \mathrm{is ~ a ~ convex ~ set ~ for ~ all ~ } \lambda \in \mathbb{R}.
 \end{align*}

Quasiconvex functions appear in many applications from different fields as, for instance, in Economics and Financial Theory, especially in consumer pre\-fe\-ren\-ce theory (see \cite{D-1959,MWG}), since quasiconcavity is the mathematical formulation of the natural assumption of a {\it tendency to diversification} on the consumers. 

It is said that $h$ is quasilinear if $h$ is quasiconvex and $-h$ is quasiconvex.
As a consequence,  its sublevel set $S_{\lambda} (h)$ and its   upper level
sets $W_{\lambda} (h)$  are convex for all $\lambda \in \mathbb{R}$ (see
\cite[Theorem 3.3.1]{CMA}).

Let $K \subseteq \mathbb{R}^{n}$ be a convex set and $h: K \rightarrow \mathbb{R}$ be a differentiable function. Then, the following assertions hold:
\begin{itemize}
 \item[$(i)$] $h$ is quasiconvex if and only if for every $x, y \in K$, we have (see \cite{AE} and \cite[Theorem 3.11]{ADSZ}) that
\begin{equation}\label{char:AE}
 h(x) \leq h(y) ~ \Longrightarrow ~ \langle \nabla h(y), x - y \rangle \leq 0;
\end{equation}

 \item[$(ii)$] $h$ is quasilinear if and only if for every $x, y \in K$, we have (see \cite[Theorem 3.3.6]{CMA})
 \begin{equation}\label{char:quasilinear}
  h(x) = h(y) ~ \Longrightarrow ~ \langle \nabla h(y), x - y \rangle = 0.
 \end{equation}
 \end{itemize}

 Let $h: \mathbb{R}^{n} \rightarrow \mathbb{R}$ be a differentiable function. Then $h$ is said to be pseudoconvex (see \cite{Manga}) if
\begin{equation}\label{pseudoconvex}
 \langle \nabla h(x), y - x \rangle \geq 0 ~ \Longrightarrow ~ h(y) \geq h(x).
\end{equation}
A function $h$ is pseudoconcave if $-h$ is pseudoconvex. Furthermore, if $h$ is pseudoconvex, then every local minimum is global minimum \cite[Theorem 3.2.5]{CMA}, a property that quasiconvex functions do not have.

For a further study on generalized convexity, we refer to \cite{AE,ADSZ,CMA,HKS,Manga,SZ} among others.

\section{Optimal Routing with Gas Fee}\label{sec:03}

Our basic framework is that of \cite{OptRou-2022}, which we summarize in the following. We consider a system of $m\in \mathbb{N}$, $m\geq 1$, number of CFMMs, which trades multiple tokens (or crypto-assets) from an universe of $n \in \mathbb{N}$, $n \geq 2$, tokens. Let $n$ be the specific number of tokens that we could trade. Each market $i$ has $n_{i} \in \mathbb{N}$ tokens with $n_{i} \in \{2, \ldots, n\}$ for each $i \in \{1, \ldots, m\}$, and each market $i$ is associated with a connectivity matrix $A^{i} \in \mathbb{R}^{n \times n_{i}}$, with $i \in \{1,\ldots ,m\}$, which relates the global index of a token to its local index. Specifically, $A_{jk}^{i} = 1$ when the global token $j$ corresponds to the local index $k \in \{1, \ldots, n_{i}\}$ in each CFMM $i$.

Each CFMM $i$ has a quantity of reserves $R^{i} \in \mathbb{R}_{+}^{n_{i}}$ and requires a relative tax payment of $\gamma_{i} \in \, ]0,1[$ for each trade. We recall that the CFMM $i$ accepts a proposed trade $(x^{i},y^{i})\in \mathbb{R}_{+}^{n_{i}} \times \mathbb{R}_{+}^{n_{i}}$ when its trade function $\varphi_{i}: \mathbb{R}_{+}^{n_{i}} \rightarrow \mathbb{R}$ satisfies that 
\begin{equation}\label{trade:function}
\varphi_{i} (R^{i}+\gamma _{i}y^{i}-x^{i}) = \varphi_{i}(R^{i}).
\end{equation}

Based on \cite[Section 5]{OptRou-2022}, we are interested in finding a set of valid trades that maximizes the utility of the trader $u: \mathbb{R}^{n} \rightarrow \mathbb{R}$ in the presence of gas fees $q \in \mathbb{R}_{+}^{m}$, where each $q_{i}\in \mathbb{R}_{+}$ corresponds to CFMM $i$. The gas fee is the cost of recording a trade in the blockchain network, which will be assumed to be fixed for simplicity. We also introduce $b^{i} \in \mathbb{R}_{++}^{n_{i}}$, where $b_{k}^{i}$ is the maximum amount of asset $k$ that the trader can buy from CFMM $i$. Hence, the optimal routing problem with gas fee is given by (see \cite[Section 5]{OptRou-2022})
\begin{equation}
\begin{aligned}
&\text{maximize} && u(\Psi ) - \langle q, \eta \rangle \\ 
&\text{subject to} && \Psi = \sum_{i=1}^{m} A^{i} (x^{i} - y^{i}), \\ 
&&& \varphi_{i}(R^{i} + \gamma_{i} y^{i} - x^{i}) = \varphi_{i}(R^{i}), \\ 
&&& 0 \leq x^{i} \leq R^{i},  \\ 
&&& 0 \leq y^{i} \leq \eta_{i} b^{i}, \\ 
&&& \eta_{i} \in \{0,1\}, ~ \forall ~ i \in \{1, \ldots, m\}.
\end{aligned}
\tag{$\mathcal{P}(\mathbf{b},q)$}
\label{GF}
\end{equation}

In this model, $\eta_{i} \in \{0,1\}$ serves as a binary activation indicator for CFMM $i$, taking the value $\eta_{i} = 1$ when a trade is executed in pool $i$, and $\eta_{i} = 0$ other\-wi\-se. The inequality constraint $y^{i} \leq \eta_{i}b^{i}$ smoothly enforces market activation by bounding trade amounts $y^{i}$ by $b^{i}$ when active ($\eta_i = 1$) while re\-qui\-ring $y^{i} = 0$ when inactive. Our notation emphasizes the crucial parameters $\mathbf{b} := (b^{1},\ldots,b^{m})$ (activation bounds) and $q$ (vector of gas fees), which are fundamental to the analysis.

Furthermore, we assume that $b^{i} \in \mathbb{R}_{++}^{n_{i}}$ for all $i \in \{1,\ldots,m\}$, i.e., 
\begin{equation}\label{assump:b}
 b_{k}^{i} \neq 0, ~~ \forall ~ i \in \{1, \ldots, m\}, ~ \forall ~ k \in \{1, \ldots, n_{i}\}.
\end{equation}
The assumption \eqref{assump:b} means that each CFMM $i \in \{1, \ldots, m\}$ allows for a positive maximal injection $b^{i}_{k}$ of the cryptoasset $k \in \{1, \ldots, n_{i}\}$.

Following \cite{OptRou-2022}, we relax the parameters $0 \leq \eta _{i}\leq 1$, in order to obtain a more computable form of the problem, that is, 
\begin{equation}
\begin{array}{ll}
\text{maximize} & u(\Psi )-\langle q,\eta \rangle  \\ 
\text{subject to} & \Psi =\displaystyle{\sum_{i=1}^{m}A^{i}}{(x^{i}-y^{i})},
\\ 
& \varphi_{i} (R^{i}+\gamma _{i}{y^{i}}-{x^{i}})=\varphi _{i}(R^{i}), \\ 
& 0\leq x^{i} \leq R^{i}, \\ 
& 0 \leq y^{i}\leq \eta _{i}b^{i}, \\ 
& 0\leq \eta_{i} \leq 1, ~ \forall ~ i \in \{1, \ldots ,m\}.\ \tag{$\mathcal{P}^{r}(\mathbf{b},q)$}%
\end{array}
\label{rGF}
\end{equation}%
We use the notation 
\begin{equation*}
 (\mathbf{x}, \mathbf{y}, \mathbf{\eta}) = (x^{1}, \ldots,x^{m},y^{1}, \ldots, y^{m},\eta_{1}, \ldots,\eta_{m}) \in \prod\limits_{i=1}^{m} \mathbb{R}_{+}^{n_{i}} \times \prod\limits_{i=1}^{m} \mathbb{R}_{+}^{n_{i}} \times [0,1]^{m}, 
\end{equation*}
where $[0,1]^{m} := \prod\limits_{i=1}^{m} [0,1]$ while $\mathbf{x} := (x^{1}, \ldots, x^{m})$, $\mathbf{y} := (y^{1}, \ldots, y^{m})$ and $\eta := (\eta_{1}, \ldots, \eta_{m})$ denote, respectively, a potential trade and the vector of market activation relaxed pa\-ra\-me\-ters. We stress that the relaxed activation variables do not represent executable
strategies \emph{per se}, but serve as analytical proxies to characterize optimality
and approximation properties.

In the same way, by $A := (A^{1},\ldots ,A^{m}): \prod\limits_{i=1}^{m} \mathbb{R}^{n_{i}} \rightarrow \mathbb{R}^{n}$ we denote the linear operator defined by 
\begin{equation*}
 A(\mathbf{x}) = {\sum_{i=1}^{m} A^{i}x^{i}},
\end{equation*}
Then, problem \eqref{rGF} can be equivalently written as: 
\begin{equation*}
\begin{array}{ll}
\text{maximize} & (u\circ A) (\mathbf{x}-\mathbf{y}) - \langle q, \eta \rangle 
\\ 
\text{subject to} & \varphi_{i} (R^{i}+ \gamma_{i}{y^{i}} - {x^{i}}) - \varphi_{i} (R^{i})=0, \\ 
& 0\leq x^{i}\leq R^{i}, \\
& y^{i}-\eta _{i}b^{i}\leq 0,\  \\ 
& 0 \leq y^{i}, \\
& 0 \leq \eta_{i} \leq 1, ~ \forall ~i\in \{1,\ldots,m\}, 
\end{array}
\end{equation*}
which we can model as an equivalent minimization problem with equa\-li\-ty and inequality constraints: 
\begin{equation}
\begin{array}{ll}
\text{minimize} & f(\mathbf{x},\mathbf{y},\eta) := (-u \circ A)(\mathbf{x} - \mathbf{y}) + \langle q, \eta \rangle \\ 
\text{subject to} & G_i(\mathbf{x},\mathbf{y},\eta) := y^i - \eta_i b^i \leq 0, \tag{$\mathcal{P}^{r}(\mathbf{b},q)$} \\ 
& H_i(\mathbf{x},\mathbf{y},\eta) := \varphi_i(R^i + \gamma_i y^i - x^i) - \varphi_i(R^i) = 0, \\ 
& \forall ~ i = 1,\dots,m, \\ 
& (\mathbf{x},\mathbf{y},\eta) \in \widehat{\mathbf{S}},
\end{array}
\label{rP}
\end{equation}
where $f: \prod\limits_{i=1}^{m} \mathbb{R}_{+}^{n_{i}} \times \prod\limits_{i=1}^{m} \mathbb{R}_{+}^{n_{i}} \times [0,1]^{m} \rightarrow \mathbb{R}$, $\mathbf{\ } G_{i}: \prod\limits_{i=1}^{m}
\mathbb{R}_{+}^{n_{i}} \times \prod\limits_{i=1}^{m} \mathbb{R}_{+}^{n_{i}} \times [0,1]^{m} \rightarrow \mathcal{Z}$, $H_{i}: \prod\limits_{i=1}^{m} \mathbb{R}_{+}^{n_{i}} \times
\prod\limits_{i=1}^{m} \mathbb{R}_{+}^{n_{i}} \times [0,1]^{m} \rightarrow \mathbb{R}^{m}$ and the feasible set is
\begin{equation*}
 \mathbf{\hat{S}} := \left\{ \mathbf{(x,y,}\eta \mathbf{)}: \, 0 \leq x^{i}\leq R^{i}, \text{ }0 \leq {y^{i}},\text{ }0 \leq \eta _{i}\leq 1, ~ \forall ~ i \in \{1,\ldots ,m\}\right\} .
\end{equation*}
 

\subsection{Necessary Optimality Conditions}

Under the stated conditions, all problems can be solved by applying standard results (see, for example, \cite[Theorem 2.3]{jahn2020introduction}). 

Throughout this paper, we assume that the utility function $u$ and the market functions $\varphi _{i}$ are differentiable for every $i = 1, \ldots, m$, that $({\bf \bar{x}}, {\bf \bar{y}}, \bar{\eta}) \mathbf{\in \hat{S}}$ is a solution to problem \eqref{rP} and that $\mathbf{(x,y,} \eta \mathbf{)\in } \prod\limits_{i=1}^{m} \mathbb{R}^{n_{i}} \times \prod\limits_{i=1}^{m} \mathbb{R}^{n_{i}} \times [0,1]^{m}$ is an arbitrary direction, respectively. We also consider the following assumptions:
\begin{itemize}
 \item Positiveness of reserve and maximum of tendered baskets, 
\begin{equation}\label{p:Rb}
 \mathbf{R,b\in }  \prod\limits_{i=1}^{m} \mathbb{R}_{++}^{n_{i}}.
\end{equation}

 \item Non-overlapping support (see \cite[Section 3.2]{angeris2022constant}), that is, the following complementary condition is verified for every trade
\begin{equation}\label{def:comple}
 0 \leq \bar{x}^{i} \bot \bar{y}^{i} \geq 0 ~ \Longleftrightarrow ~ \bar{x}^{i} \geq 0, \, \bar{y}^{i} \geq 0\text{, } \bar{x}_{j}^{i} \, \bar{y}_{j}^{i} = 0, ~ \forall ~ i,j,
\end{equation}
where $i \in \{1, \ldots, m\}$ and $j \in \{1, \ldots, n_{i}\}$.

\item Increasing behavior of trading and utility functions (see \cite{ELS-1}):
\begin{align} 
 & ~ \nabla u(x) \geq 0 ~ \text{ for every }  x\in \mathbb{R}^{n}, \label{p:utility} \\
 & \nabla \varphi_{i}(x) \gneq 0\text{ }\ \text{for every }x>0\text{ and every }i \in \{1,\ldots ,m\}, \label{p:market}
\end{align}
where the last one implies that $\varphi_{i}$ is strictly increasing with respect to at least one component and, in particular, 
 
\begin{equation*} 
x\geq y>0~\Longrightarrow ~\varphi _{i}(x)\geq \varphi _{i}(y)\ \text{for
every }i=1,\ldots ,m.
\end{equation*}  
\end{itemize}

We define the price vector (see \cite{angeris2022constant}) and the updated price vector by
\begin{align*}
 & P^{i} := \nabla \varphi _{i}(R^{i}),\ \text{ for every } i = 1, \ldots,m, \\
 & \bar{P}^{i} := \nabla \varphi _{i}(R^{i} + \gamma_{i} {\bar{y}^{i}}-{\bar{x}^{i}}), \text{ for every } i = 1,\ldots,m.
\end{align*}

Under condition \eqref{p:market}, $P^{i}, \bar{P}^{i} \neq 0$ for every $i=1,\ldots,m$. 

\begin{remark} \label{rem:mu}
Under the stated conditions, problem \eqref{rP} is solvable and has at least one solution $\left( \mathbf{\bar{x}}, \mathbf{\bar{y}}, \bar{\eta}\right)$ by applying standard results (see, e.g., \cite[Theorem 2.3 ]{jahn2020introduction}). 
Furthermore, if $\bar{\eta}_{i}=0$, then $\bar{x}^{i} = \bar{y}^{i}=0$.
 
Conversely, if $\bar{x}^{i}=\bar{y}^{i}=0$ for every $i\in \{1,\dots,m\}$, i.e., $\mathbf{\bar{x}} = \mathbf{\bar{y}} = \mathbf{0}$, then $\mathbf{\bar{\eta}}=0$. Otherwise, if some $0 < \bar{\eta}_{r} \leq 1$, we have 
\begin{equation*}
f(\mathbf{0},\mathbf{0},0) = -u \left( A(\mathbf{0}) \right) < -u \left( A(\mathbf{0}) \right) + q_{r} \bar{\eta}_{r} \leq f(\mathbf{0}, \mathbf{0}, \bar{\eta}),
\end{equation*}
which violates optimality.
\end{remark}

A necessary condition is given by a standard multiplier rule \cite[Theo\-rem 5.3]{jahn2020introduction}: Assume that $(\mathbf{\bar{x}},\mathbf{\mathbf{\bar{y}},} \bar{\eta})$ solves problem \eqref{rP} and a constraint qualification holds, in particular,  if the Kurcyusz-Robinson-Zowe (KRZ) constraint qua\-li\-fi\-ca\-tion is verified:
\begin{eqnarray} \nonumber
\left( 
\begin{array}{c}
 \nabla G({\bf \bar{x}}, {\bf \bar{y}}, \bar{\eta}) \\ 
 \nabla H({\bf \bar{x}}, {\bf \bar{y}}, \bar{\eta})
\end{array}
\right) \cone(\mathbf{\hat{S}}-({\bf \bar{x}}, {\bf \bar{y}}, \bar{\eta})) + \cone\left( 
\begin{array}{c}
\prod\limits_{i=1}^{m}\mathbb{R}_{+}^{n_{i}} + \left\{ G({\bf \bar{x}}, {\bf \bar{y}},\bar{
\eta}) \right\}  \\ 
0
\end{array}%
\right)
\\ \label{KRZ}
= 
\left( 
\begin{array}{c}
\prod\limits_{i=1}^{m} \mathbb{R}^{n_{i}} \\ 
\mathbb{R}^{m} 
\end{array} 
\right), 
\end{eqnarray}
then the following multiplier rule is verified: Find $\left( \left( \mathbf{\bar{x}}, \mathbf{\mathbf{\bar{y}},} \bar{\eta} \right), \left( \mu^{1},\ldots,\mu^{m} \right) ,\lambda \right) \in \mathbf{\hat{S}} \times \left( \prod\limits_{i=1}^{m} \mathbb{R}_{+}^{n} \right) \times \mathbb{R}^{m}$ such that 
\begin{flalign}
 & \!\! \left( \! \nabla f(\mathbf{\bar{x}, \bar{y},} \bar{\eta} \mathbf{)} \! + \!\! \sum\limits_{i=1}^{m} (\mu^{i})^{\top} \nabla G_{i} (\mathbf{\bar{x}, \bar{y},} \bar{\eta}) \! + \!\! \sum\limits_{i=1}^{m} \lambda _{i} \nabla H_{i}(\mathbf{\bar{x}, \bar{y},} \bar{\eta}) \! \right) \! (\mathbf{x-\bar{x} \mathbf{,y-\bar{y}\mathbf{,}}}\eta \mathbf{-} \bar{\eta}) \geq 0, \notag  \\ 
 & (\mu^{i})^{\top} G_{i} (\mathbf{\bar{x}, \bar{y},} \bar{\eta}) = 0,\ \text{ for every }
i=1,\ldots,m, \label{KKT-1} \\ 
 & H_{i} (\mathbf{\bar{x}}, \mathbf{\bar{y}}, \bar{\eta}) =0 \ \text{for every } i=1,\ldots,m, \text{ and }(\mathbf{x}, \mathbf{\mathbf{y},} \eta ) \in \mathbf{\hat{S}} \text{ arbitrary.} \notag
\end{flalign}

In the following, we note that, by a direct calculation, we have
\begin{align*}
 & \nabla f(\mathbf{\bar{x}, \bar{y},} \bar{\eta}) ({\bf x}, {\bf y},\eta) = \sum_{i=1}^{m} \nabla u \left( A(\mathbf{\bar{x}} - \mathbf{\bar{y}}) \right) A^{i} y^{i} -{\sum_{i=1}^{m}} \nabla u \left( A(\mathbf{\bar{x}} - \mathbf{\bar{y}}) \right) A^{i}x^{i} + q \cdot \eta, \\
 & \nabla G_{i} (\mathbf{\bar{x},\bar{y},} \bar{\eta}) (\mathbf{x,y,} \eta) = y^{i}-\eta _{i} b^{i}, \\
 & \nabla H_{i} (\mathbf{\bar{x}, \bar{y},} \bar{\eta}) (\mathbf{x,y,}\eta) = - \nabla \varphi_{i} (\mathbf{R} + \gamma  \mathbf{\bar{y}} - \mathbf{\bar{x})} x^{i} \mathbf{+} \gamma_{i} \nabla \varphi_{i}(\mathbf{R} + \gamma \mathbf{\bar{y}} - \mathbf{\bar{x})} y^{i},
\end{align*}
for every $i \in \{1,\ldots,m\}$. 

In the next result, we establish conditions for which the constraint qua\-li\-fi\-ca\-tion \eqref{KRZ} is verified.

\begin{pr}\label{lm:KRZ} 
 Assume that properties \eqref{p:Rb} and \eqref{p:market} hold. Let $(\mathbf{\bar{x}}, \mathbf{\bar{y}}, \bar{\eta})$ be a solution to \eqref{rP} satisfying \eqref{def:comple}.   Then, constraint qualification \eqref{KRZ} holds at $(\mathbf{\bar{x}}, \mathbf{\bar{y}}, \bar{\eta}) = (\mathbf{0}, \mathbf{0}, 0)$. Moreover, when $(\mathbf{\bar{x}}, \mathbf{\bar{y}}, \bar{\eta}) \neq (\mathbf{0}, \mathbf{0}, 0)$, condition \eqref{KRZ} is also satisfied if the following additional property holds:
\begin{equation}\label{p:y}
 0 < b_{j}^{i} - \bar{y}_{j}^{i} \leq \frac{2}{\gamma_{i}} R_{j}^{i} ~ \text{with} ~ \bar{y}_{j}^{i} > 0, ~ \forall ~ j \in \{1, \ldots,n_{i}\}, ~ \forall ~ i \in \{1, \ldots, m\}.
\end{equation}
\end{pr}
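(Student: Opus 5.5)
The plan is to verify \eqref{KRZ} directly. I fix an arbitrary target $(z,w)\in\prod_{i}\mathbb{R}^{n_i}\times\mathbb{R}^m$ and produce a direction $d=t\,((\mathbf{x},\mathbf{y},\eta)-(\bar{\mathbf{x}},\bar{\mathbf{y}},\bar\eta))$ with $t\ge 0$ and $(\mathbf{x},\mathbf{y},\eta)\in\hat{\mathbf S}$, together with an element $s\,(p+G(\bar{\mathbf x},\bar{\mathbf y},\bar\eta))$ with $p\ge 0$ and $s\ge0$. These must satisfy
\[
\nabla G(\bar{\mathbf x},\bar{\mathbf y},\bar\eta)d+s\bigl(p+G(\bar{\mathbf x},\bar{\mathbf y},\bar\eta)\bigr)=z,\qquad \nabla H(\bar{\mathbf x},\bar{\mathbf y},\bar\eta)d=w .
\]
Both the constraints and $\hat{\mathbf S}$ split across the markets, so it suffices to work one market $i$ at a time. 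That is, I need to realize an arbitrary $z^i\in\mathbb{R}^{n_i}$ and an arbitrary scalar $w_i$.

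First I handle the $H$-row, which is the surjectivity of the scalar map
\[
d\mapsto \bar P^i\cdot(\gamma_i d_y^i-d_x^i)
\]
onto $\mathbb{R}$, restricted to feasible directions. By \eqref{p:market} there is some $k$ with $\bar P^i_k>0$. By \eqref{p:Rb} we have $R^i>0$, so at $\bar x^i=0$ we may move $x^i_k$ into $[0,R^i_k]$, which gives negative values of $-\bar P^i_k d_{x,k}$. Since $y^i_k$ may increase (there is no upper bound on $y$ in $\hat{\mathbf S}$), we obtain positive values. Hence a cone containing both signs in this coordinate, i.e.\ all of $\mathbb{R}$, is reachable.

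Next I handle the $G$-row. The cone generated by $\mathbb{R}^{n_i}_+ + G_i$ is all of $\mathbb{R}^{n_i}$ exactly when $G_i<0$ componentwise. Otherwise I must also use $d_y^i-d_\eta b^i$.

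\textbf{Case $(\bar{\mathbf x},\bar{\mathbf y},\bar\eta)=0$.} Here $G_i=0$, so the second cone only supplies $\mathbb{R}^{n_i}_+$. I use $d_\eta>0$: the direction $-d_\eta b^i$ with $b^i>0$ yields arbitrarily negative vectors, and $d_y\ge0$ together with $\mathbb{R}_+^{n_i}$ covers the rest. I then have to check that the $d_y$ needed for $H$ can be compensated on the $G$-row by an extra $d_\eta$ and positive slack. This works because $b^i>0$ dominates any vector after scaling. So $\mathbb{R}^{n_i}\times\mathbb{R}$ is covered, by first choosing the $H$-part and then correcting the $G$-part with $d_\eta$ and $p$.

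\textbf{Case of a nonzero point.} By Remark~\ref{rem:mu}, if $\bar\eta_i=0$ the block is zero and the argument above applies. Otherwise \eqref{p:y} gives $G_i=\bar y^i-\bar\eta_i b^i\le \bar y^i-b^i<0$ componentwise. Then $\cone(\mathbb{R}^{n_i}_++G_i)=\mathbb{R}^{n_i}$ by itself, and only the $H$-row remains. For that row I perturb $y^i$ in both directions, which is possible since $\bar y^i_j>0$, or $x^i$ using $\bar x^i\perp\bar y^i$ from \eqref{def:comple}. The upper bound $b^i_j-\bar y^i_j\le \frac{2}{\gamma_i}R^i_j$ keeps the perturbed point $R^i+\gamma_i y^i-x^i$ within a range where $\bar P^i$ is well defined and positive in some coordinate.

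The main obstacle I expect is the degenerate coordinates: when $\bar P^i_k>0$ only for an index $k$ where both $\bar x^i_k$ and $\bar y^i_k$ sit at a boundary, one cannot move in both directions. I would try to settle this first by combining the one-sided moves in $x_k$ and $y_k$ as above, and to invoke \eqref{p:y} exactly at this point.
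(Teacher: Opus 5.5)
Your treatment of the point $(\mathbf 0,\mathbf 0,0)$ is fine and is essentially the paper's argument. There $G_i=0$, so you fix the $H$-row with a coordinate $k$ satisfying $\bar P^i_k>0$, take $\eta_i=1$, and let a large multiple of $b^i>0$ plus the slack $p\ge 0$ absorb $z^i$. The paper does the same with $\alpha=\beta$ and $y^i=\tfrac12\eta_i b^i$.

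The nonzero case, however, rests on a false step. You write $G_i=\bar y^i-\bar\eta_i b^i\le \bar y^i-b^i<0$. Since $0<\bar\eta_i\le 1$ and $b^i>0$, the inequality actually goes the other way: $\bar y^i-\bar\eta_i b^i\ge \bar y^i-b^i$. Condition \eqref{p:y} gives $\bar y^i<b^i$, not $\bar y^i<\bar\eta_i b^i$.

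Worse, at a solution with $q_i>0$ and $\bar\eta_i>0$, $G_i<0$ componentwise is impossible. If it held, you could lower $\bar\eta_i$ by a small $\delta$, stay feasible, and decrease $f$ by $q_i\delta$. So some component of $G_i(\bar{\mathbf x},\bar{\mathbf y},\bar\eta)$ vanishes, and $\cone(\mathbb{R}^{n_i}_+ + G_i)$ lies in a half-space. The $G$-row is therefore not free, and the two rows cannot be decoupled: one and the same direction must serve both. Your $H$-row sketch (perturbing $y^i$ ``in both directions'') ignores this; raising $y^i_j$ where $G_{i,j}=0$ forces a change in $\eta_i$ that you never account for. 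The real obstacle is this coupling, not the degenerate coordinates you flag.

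The missing idea is a Slater-type point combined with the affinity of $G_i$:
\begin{itemize}
\item Take $\beta=\alpha$, so the $G$-row becomes $\alpha\bigl(y^i-\eta_i b^i+c^i\bigr)$.
\item Choose $\eta_i=1$ and $y^i=\tfrac12(\bar y^i+b^i)$. This is where $b^i-\bar y^i>0$ is really used: it gives $y^i-\eta_i b^i=-\tfrac12(b^i-\bar y^i)<0$, so $c^i=\tfrac12(b^i-\bar y^i)+z^i/\alpha\ge 0$ for $\alpha$ large.
\item Because this direction raises $y^i$, compensate in the $H$-row by raising $x^i$: set $x^i=\bar x^i+\tfrac{\gamma_i}{2}(b^i-\bar y^i)-\tfrac{t_i}{\alpha\bar P^i_k}e^k$. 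This is nonnegative for large $\alpha$.
\item The upper bound $b^i_j-\bar y^i_j\le \frac{2}{\gamma_i}R^i_j$, together with \eqref{def:comple}, is what keeps $x^i\le R^i$. That is, it keeps the direction inside $\cone(\hat{\mathbf S}-(\bar{\mathbf x},\bar{\mathbf y},\bar\eta))$.
\end{itemize}

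Your reading of that upper bound, as keeping $R^i+\gamma_i y^i-x^i$ in a region where $\bar P^i$ is well defined, is mistaken. $\bar P^i$ is the gradient at the fixed point $R^i+\gamma_i\bar y^i-\bar x^i$, and the linearized condition \eqref{KRZ} never evaluates $\nabla\varphi_i$ at perturbed points.
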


\begin{proof}
 Condition \eqref{KRZ} is equivalent to: For every $((z^{1},\ldots,z^{n}), t) \in \! \prod\limits_{i=1}^{m} \mathbb{R}^{n_{i}} \times \mathbb{R}$ we can take $\alpha ,\beta \in \mathbb{R}_{+}$, $({\bf x}, {\bf y}, \eta) \in \mathbf{\hat{S}}$ and $c,d \in \prod\limits_{i=1}^{m} \mathbb{R}_{+}^{n_{i}}$ such that  
\begin{equation*}
\begin{array}{l} 
 z^{i} = \nabla G_{i} ({\bf \bar{x}}, {\bf \bar{y}}, \bar{\eta}) (\alpha (x^{i} - \bar{x}^{i}, y^{i} -\bar{y}^{i}, \eta_{i} - \bar{\eta}_{i})) + \beta (c^{i} + \left\{ G_{i} ({\bf \bar{x}}, {\bf \bar{y}}, \bar{\eta}) \right\}), \\ 
 t_{i} = \nabla H_{i} ({\bf \bar{x}}, {\bf  \bar{y}}, \bar{\eta})(\alpha (x^{i} - \bar{x}^{i}, y^{i} -\bar{y}^{i}, \eta_{i} - \bar{\eta}_{i})). 
\end{array}
\end{equation*}  
Indeed, by a direct calculation, the latter is equivalent to: There exist $0 \leq x^{i} \leq R^{i}$, $0 \leq y^{i}$ and $0 \leq \eta_{i} \leq 1$ such that 
\begin{subequations}
\begin{align}
 & z_{j}^{i} = \alpha \left( (y^{i}_{j} - \bar{y}^{i}_{j}) - (\eta_{i} - \bar{\eta}_{i}) b^{i}_{j} \right) + \beta (c^{i}_{j} + \bar{y}^{i}_{j} - \bar{\eta}_{i} b^{i}_{j}),  \label{180224a} \\
 & \, t_{i} = \alpha (\bar{P}^{i})^{\top} \left( - (x^{i} -\bar{x}^{i}) + \gamma_{i} (y^{i} - \bar{y}^{i}) \right), \label{180224b}
\end{align}
for every $i \in \{1, \ldots, m\}$ and every $j \in \{1, \ldots, n_{i}\}$.

Without loss of generality, in the sequel we consider a fixed $i \in \{1, \ldots ,m\}$. By hypothesis \eqref{p:market}, we can assume 
\end{subequations}
\begin{equation*}
 \bar{P}_{k}^{i} \neq 0,
\end{equation*}
for some  $k \in \{1, \ldots, n_{i}\}$. Then, we consider the following cases:

If $\bar{\eta}_{i} = 0$, then $(\bar{x}^{i}, \bar{y}^{i}) = (0,0)$ by Remark \ref{rem:mu}. In this case, equations \eqref{180224a} and \eqref{180224b} reduce to
\begin{subequations}
\begin{align}\label{270325a}
 z^{i} & =\alpha \left( y^{i} - \eta_{i} b^{i} \right) + \beta c^{i}, \\
 t_{i} & = \alpha (\bar{P}^{i})^{\top} \left( -x^{i} + \gamma_{i} y^{i}\right). \label{270325b}
\end{align}
Setting $\alpha = \beta$, equation \eqref{270325a} simplifies to 
\end{subequations}
\begin{equation}\label{270325c}
 z^{i} = \alpha \left( y^{i} + c^{i} - \eta_{i} b^{i} \right).
\end{equation}

We may assume $\eta_{i} b^{i} - y^{i} \in \mathrm{int} (\mathbb{R}%
_{+}^{n_{i}})$, for instance choosing $y^{i} = \frac{1}{2} \eta_{i} b^{i}
\in \mathrm{int} (\mathbb{R}_{+}^{n_{i}})$ with $\eta_{i} > 0$. On the other
hand, we can define: 
\begin{equation}
c^{i} = \eta_{i} b^{i}- y^{i} + \frac{1}{\alpha} z^{i} = \frac{1}{2}
\eta_{i} b^{i} + \frac{1}{\alpha} z^{i} \geq 0,  \label{280325}
\end{equation}
which holds for sufficiently large $\alpha$, thus there exists $\alpha_{1} > 0$ such
that \eqref{280325} for every $\alpha \geq \alpha_{1}$.

 Let $x^{i}=\gamma _{i}y^{i}-\dfrac{1}{\alpha }\frac{t_{i}}{\bar{P}_{k}^{i}%
}e^{k}=\frac{1}{2}\gamma _{i}\eta _{i}b^{i}-\dfrac{1}{\alpha }\frac{t_{i}}{%
\bar{P}_{k}^{i}}e^{k}$, where $e^{k}$ (with $e_{j}^{k}=\delta _{jk}$) is the 
$k$-th standard basis vector. Without loss of generality we consider $y^{i} =  \frac{1}{2} \eta _{i}b^{i}\leq R^{i}$. Thus, taking $\eta _{i}$ small enough, we have 
\begin{equation}
x^{i}=\gamma _{i}y^{i}-\frac{1}{\alpha }\frac{t_{i}}{\bar{P}_{k}^{i}}%
e^{k}\leq \frac{1}{2}\gamma _{i}\eta _{i}b^{i}\leq R^{i}.  \notag
\end{equation}
At the same time there exists $\alpha _{2}>0$ large enough such that 
\begin{equation}
x^{i}=\gamma _{i}y^{i}-\frac{1}{\alpha }\frac{t_{i}}{\bar{P}_{k}^{i}}%
e^{k}\geq 0,\mathrm{~for~all~}\alpha \geq \alpha _{2}.  \label{2803252}
\end{equation}
Taking $\alpha := \max\{\alpha_{1}, \alpha_{2}\} > 0$ and substituting \eqref{280325} and \eqref{2803252} into \eqref{270325c} and  \eqref{270325b}, we obtain: 
\begin{align}
 \alpha (y^{i} + (\eta_{i} b^{i} + \frac{z^{i}}{\alpha} - y^{i}) - \eta_{i} b^{i}) & = z^{i}, \label{eq:z} \\
 \alpha (\bar{P}^{i})^{\top} (-( \gamma_{i} y^{i} - \frac{1}{\alpha} \frac{t_{i}}{\bar{P}_{k}^{i}} e^{k}) + \gamma_{i} y^{i}) & = t_{i}. \label{eq:t}
\end{align}%
Thus, equations \eqref{270325a} and \eqref{270325b} are satisfied, that is, the constraint qualification condition holds at $(\bar{\mathbf{x}}, \bar{\mathbf{y}}, \bar{\eta}) = (0,0,0)$.

Let us analyze the case when $\bar{\eta}_{i} \neq 0$. If $\bar{\eta}_{i} \neq 0$, then by a similar reasoning as before we consider $\alpha = \beta$, thus replacing in \eqref{180224a} and \eqref{180224b}, we have
\begin{subequations}
\begin{align}
 & z^{i} = \alpha ( y^{i} - \eta_{i} b^{i} + c^{i}), \\
 & t_{i} = \alpha (\bar{P}^{i})^{\top} (-(x^{i} - \bar{x}^{i}) + \gamma_{i} (y^{i} - \bar{y}^{i})).
\end{align}
\end{subequations}
In this case, if $y^{i} = \frac{1}{2} (\bar{y}^{i} + b^{i})$ and $\eta _{i}=1$, then $y^{i} - \bar{y}^{i} = \dfrac{1}{2} \left( b^{i}-\bar{y}^{i} \right) \in {\rm int} (\mathbb{R}_{+}^{n_{i}})$ by \eqref{p:y}, and we can assure that $c^{i} := \dfrac{1}{2} \left( b^{i} - \bar{y}^{i} \right) + \frac{z^{i}}{\alpha} \in \mathbb{R}_{+}^{n_{i}}$ and
\begin{equation*}
 x^{i} := \bar{x}^{i} + \gamma_{i} (y^{i} - \bar{y}^{i}) - \frac{1}{\alpha} \frac{t_{i}}{\bar{P}_{k}^{i}} e^{k} = \bar{x}^{i} + \frac{\gamma_{i}}{2} (b^{i} - \bar{y}^{i}) - \frac{1}{\alpha} \frac{t_{i}}{\bar{P}_{k}^{i}} e^{k} \in \mathbb{R}_{+}^{n_{i}},
\end{equation*}
for $\alpha>0$   big  enough. Furthermore, we can assure that $x^{i} \leq R^{i}$ if 
\begin{equation}\label{by:compcond}
 \bar{x}^{i} + \dfrac{\gamma_{i}}{2} (b^{i} - \bar{y}^{i}) \leq R^{i} ~ \Longleftrightarrow ~ 2 \bar{x}^{i} + \gamma_{i} (b^{i} - \bar{y}^{i}) \leq 2R^{i}.
\end{equation}
By the complementary condition \eqref{def:comple}, relation \eqref{by:compcond} is equivalent to $2 \bar{x}_{j}^{i} \leq 2 R_{j}^{i}$ for every $j$ such that $\bar{x}_{j}^{i} > 0$, which is evident, and $\gamma_{i} (b_{j}^{i} - \bar{y}_{j}^{i}) \leq 2 R_{j}^{i}$ for $\bar{y}_{j}^{i}>0$, which holds in virtue of assumption \eqref{p:y}.
\end{proof}

In the following result, we give an explicit form for the KKT system \eqref{KKT}. Furthermore, in order to avoid confusion with variables, we emphasize that we denote partial derivatives by 
$$D_{k} u\equiv \partial u/\partial x_{k}.$$

\begin{thr} \label{nec:KKT}
 Assume that properties \eqref{p:Rb}, \eqref{p:utility}, and \eqref{p:market} hold. Also, let $(\mathbf{\bar{x}}, \mathbf{\bar{y}}, \bar{\eta}) \in \mathbf{\hat{S}}$ be a solution to \eqref{rP} satisfying \eqref{def:comple} and \eqref{p:y} whenever $(\mathbf{\bar{x}}, \mathbf{\bar{y}}, \bar{\eta}) \neq (\mathbf{0}, \mathbf{0}, 0)$. Then, there exist $\mu = (\mu^1, \ldots, \mu^m) \in \prod_{i=1}^m \mathbb{R}_+^{n_i}$ and $\alpha \in \mathbb{R}_+^m$ such that for each $i \in \{1, \ldots, m\}$, we have:
\begin{itemize}
\item[$(a)$] If $\bar{\eta}_{i}=0$, then $(\bar{x}^{i}, \bar{y}^{i}) = (0,0)$
and the associated KKT condition is given by 
\begin{equation}  \label{KKT_etai_0}
 \begin{array}{l}
 \alpha_{i} (\bar{P}^{i})^{\top} \geq \nabla u(\mathbf{0}) A^{i} \geq \gamma_{i} \alpha_{i} (\bar{P}^{i})^{\top} - (\mu^{i})^{\top}, \\ 
  \hspace{1.25cm} q_{i} - (\mu^{i})^{\top} b^{i} \geq 0.
 \end{array}
\end{equation}

\item[$(b)$] If $\bar{\eta}_{i} \neq 0$, then the associated KKT condition is given by 
\begin{equation}
\!\!\!\!\!\!\!\!\!\!\!\!
\begin{array}{ll}
 \alpha_{i} \bar{P}_{j}^{i} \geq \sum_{k=1}^{n} D_{k} u(A(\mathbf{\bar{x}} - \mathbf{\bar{y}})) A_{kj}^{i} \geq \gamma_{i} \alpha _{i} \bar{P}_{j}^{i} + \mu_{j}^{i}, & \!\!\!\! \text{when } \bar{x}_{j}^{i}=\bar{y}_{j}^{i}=0,\medskip  \\ 
\sum_{k=1}^{n}D_{k}u(A(\mathbf{\bar{x}}-\mathbf{\bar{y}}))A_{kj}^{i}=\gamma
_{i}\alpha _{i}\bar{P}_{j}^{i}+\mu _{j}^{i}, & \!\!\!\! \text{when } \bar{x}%
_{j}^{i}=0,\bar{y}_{j}^{i}>0,\medskip  \\ 
\sum_{k=1}^{n}D_{k}u(A(\mathbf{\bar{x}}-\mathbf{\bar{y}}))A_{kj}^{i}=\alpha
_{i}\bar{P}_{j}^{i}, & \!\!\!\! \text{when } 0<\bar{x}_{j}^{i}<R_{j}^{i},\text{ }%
\bar{y}_{j}^{i}=0, \medskip  \\ 
\sum_{k=1}^{n}D_{k}u(A(\mathbf{\bar{x}}-\mathbf{\bar{y}}))A_{kj}^{i}\geq
\alpha _{i}\bar{P}_{j}^{i}, & \!\!\!\! \text{when } \bar{x}_{j}^{i}=R_{j}^{i}\text{, 
}\bar{y}_{j}^{i}=0,\medskip  \\ 
(\mu ^{i})^{\top} \left( \bar{y}^{i}-\bar{\eta}_{i}b^{i}\right) =0, & 
\medskip \label{KKT_etai_1} \\ 
q_{i}-(\mu ^{i})^{\top} b^{i}=0, & \medskip  \\ 
\varphi _{i}(R^{i}+\gamma _{i}{\bar{y}^{i}}-{\bar{x}^{i}})=\varphi_{i}(R^{i}). & 
\end{array}
\end{equation}
\end{itemize}
\end{thr}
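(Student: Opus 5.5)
By Proposition \ref{lm:KRZ}, the constraint qualification \eqref{KRZ} holds at $(\mathbf{\bar{x}}, \mathbf{\bar{y}}, \bar{\eta})$. This covers both the trivial point and the nontrivial case under \eqref{p:y}. The multiplier rule \eqref{KKT-1} therefore applies. I rename the equality multipliers so that $\lambda_i$ corresponds to $-\alpha_i$ (or $\alpha_i$, whichever makes the signs come out as stated) and take $\mu^i\in\mathbb{R}^{n_i}_+$ for the activation constraints $G_i$.

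Substituting the explicit derivatives of $f$, $G_i$, $H_i$ computed just before Proposition \ref{lm:KRZ}, the variational inequality in \eqref{KKT-1} becomes
\begin{equation*}
\sum_{i=1}^m\Big[\big(-\nabla u(\bar\Psi)A^i+\lambda_i \bar P^{i\top}\cdot(-1)\big)(x^i-\bar x^i)+\big(\nabla u(\bar\Psi)A^i+(\mu^i)^\top+\gamma_i\lambda_i\bar P^{i\top}\big)(y^i-\bar y^i)+\big(q_i-(\mu^i)^\top b^i\big)(\eta_i-\bar\eta_i)\Big]\geq 0
\end{equation*}
for all $(\mathbf{x},\mathbf{y},\eta)\in\mathbf{\hat S}$, where $\bar\Psi=A(\mathbf{\bar x}-\mathbf{\bar y})$. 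Since $\mathbf{\hat S}$ is a product of intervals, this inequality decouples coordinatewise. For each $i$ and $j$, the coefficient of $x^i_j-\bar x^i_j$ must be $\geq0$ if $\bar x^i_j=0$, $=0$ if $0<\bar x^i_j<R^i_j$, and $\leq0$ if $\bar x^i_j=R^i_j$. The same holds for $y^i_j$ on $[0,\infty)$: the coefficient is $\geq 0$ at $0$ and $=0$ when $\bar y^i_j>0$. For $\eta_i$ on $[0,1]$, the coefficient $q_i-(\mu^i)^\top b^i$ is $\geq0$ at $\bar\eta_i=0$, $=0$ in the interior, and $\le 0$ at $\bar\eta_i=1$. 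Choosing the sign convention $\alpha_i=-\lambda_i$ makes the $x$-coefficient equal to $\alpha_i\bar P^i_j-\sum_k D_ku(\bar\Psi)A^i_{kj}$. Then $x$-stationarity gives the upper inequalities and equalities in the list. The $y$-coefficient, $\sum_kD_ku\,A^i_{kj}-\gamma_i\alpha_i\bar P^i_j+\mu^i_j$, gives the lower inequalities and equalities. Exactly which sign $\mu^i_j$ carries in the final display is a bookkeeping matter, and I will match it to the statement (it differs between (a) and (b) as written). Complementary slackness $(\mu^i)^\top(\bar y^i-\bar\eta_ib^i)=0$ and feasibility $H_i=0$ come directly from \eqref{KKT-1}.

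For (a), Remark \ref{rem:mu} gives $\bar x^i=\bar y^i=0$ when $\bar\eta_i=0$. Hence all $x^i_j,y^i_j$ sit at their lower bounds and we only obtain inequalities, and the $\eta_i$-coefficient gives $q_i-(\mu^i)^\top b^i\ge0$. Here $\bar\Psi$ reduces to $\mathbf{0}$ only in the fully inactive case. Otherwise the statement's $\nabla u(\mathbf 0)$ should be read as $\nabla u$ at $\bar\Psi$, which is what I would prove.

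For (b), I also need $\alpha_i\ge0$ and the equality $q_i=(\mu^i)^\top b^i$ even when $\bar\eta_i=1$. This is the main obstacle. For the sign of $\alpha_i$, I would use \eqref{p:utility} together with an index $j$ where $\bar P^i_j>0$ and the upper inequality $\alpha_i\bar P^i_j\ge\sum_k D_kuA^i_{kj}\ge0$. For the $\eta$-equality, I would argue that at $\bar\eta_i=1$ one could lower $\eta_i$ slightly, keeping $\bar y^i\le\eta_ib^i$ (by \eqref{p:y}, $\bar y^i<b^i$ strictly), which strictly decreases $f$ when $q_i>0$. So optimality forces $\bar\eta_i<1$, or else $q_i=0$, and then complementary slackness with $\bar y^i<\bar\eta_i b^i$ gives $\mu^i=0$. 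Either way the $\eta$-condition becomes an equality.
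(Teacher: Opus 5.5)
Your route is the paper's: KRZ from Proposition~\ref{lm:KRZ}, the multiplier rule, coordinatewise decoupling over the box $\mathbf{\hat{S}}$, and a case split on $\bar\eta_i$ with $\alpha_i=-\lambda_i$. You are more careful than the paper in two places. First, its proof asserts that $\eta_i-\bar\eta_i$ can take both signs whenever $\bar\eta_i>0$, which is false at $\bar\eta_i=1$. Your perturbation argument settles that case: either lower $\eta_i$ using $\bar y^i<b^i$ from \eqref{p:y}, or else $q_i=0$ and complementary slackness gives $\mu^i=0$. Second, you are right that in (a) $\nabla u$ must be evaluated at $A(\mathbf{\bar x}-\mathbf{\bar y})$, which equals $\mathbf{0}$ only if every market is inactive.

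Two loose ends need fixing.

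First, the sign of $\mu^i_j$ is not bookkeeping. Here $\mu^i\ge 0$ is the multiplier of $G_i\le 0$ and $q_i=(\mu^i)^\top b^i$. Your $y$-coefficient then forces $\sum_k D_k u\,A^i_{kj}\ge\gamma_i\alpha_i\bar P^i_j-\mu^i_j$, with equality when $\bar y^i_j>0$, in (b) exactly as in (a). Flipping the sign would violate either $\mu^i\ge 0$ or $q_i=(\mu^i)^\top b^i$. The $+\mu^i_j$ in the printed (b) and in \eqref{C1} is a typo; the paper's own computation just before \eqref{C2} gives $\gamma_i|\lambda_i|\bar P^i_j-\mu^i_j$. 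So prove (b) with $-\mu^i_j$ rather than trying to match the display.

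Second, your argument for $\alpha_i\ge 0$ in (b) needs an index with $\bar P^i_j>0$ \emph{and} $\bar x^i_j<R^i_j$. At a saturated coordinate the $x$-condition reverses to $\sum_k D_k u\,A^i_{kj}\ge\alpha_i\bar P^i_j$. Also, $R^i+\gamma_i\bar y^i-\bar x^i$ then has a zero entry, where \eqref{p:market} is not assumed.
\begin{itemize}
\item If no coordinate is saturated, such an index exists: \eqref{def:comple} gives $R^i+\gamma_i\bar y^i-\bar x^i>0$, so \eqref{p:market} applies.
\item If no such index exists, note that $\bar P^i\ge 0$ still holds on the boundary by monotonicity and continuity of $\varphi_i$. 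Hence $\bar P^i_j=0$ at every unsaturated coordinate. Every condition in (b) in which $\alpha_i$ actually appears then has the reversed form above. A negative $\alpha_i$ can therefore be replaced by $0$, using \eqref{p:utility}.
\end{itemize}
The paper's proof is no more complete at this step.
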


\begin{proof}
By Proposition \ref{lm:KRZ}, the constraint qualification (KRZ) holds and, applying \cite[Theorem 5.6]{jahn2020introduction}, it follows that the system \eqref{KKT-1} holds for $(\mathbf{\bar{x}}, \mathbf{\mathbf{\bar{y}},} \bar{\eta})$. Hence, there there exist $(\mu^{1}, \ldots, \mu^{m}) \in \prod_{i=1}^{m} \mathbb{R}_{+}^{n_{i}}$ and $\lambda \in \mathbb{R}^{m}$ such that 
\begin{subequations}
\begin{flalign}
 & \sum\limits_{i=1}^{m} \bigg[ \nabla u(A(\mathbf{\bar{x}} - \mathbf{\bar{y}}))A^{i} ( -( x^{i} - \bar{x}^{i}) + ( y^{i} - \bar{y}^{i}) ) + q_{i} (\eta_{i} - \bar{\eta}^{i}) \notag \\
 & \quad (\mu^{i})^{\top} \left[ ( y^{i} - \bar{y}^{i}) - (\eta_{i} - \bar{\eta}_{i}) b^{i} \right] + \lambda_{i} (\bar{P}^{i})^{\top} (-( x^{i} - \bar{x}^{i}) + \gamma_{i} (y^{i} - \bar{y}^{i} )) \bigg] \geq 0, \notag \\
 & (\mu^{i})^{\top} ( \bar{y}^{i} -\bar{\eta}_{i} b^{i}) = 0, \quad \text{for every } i=1,\ldots,m, \label{KKT_system} \\
 & \varphi_{i} (R^{i} + \gamma_{i} \bar{y}^{i} - \bar{x}^{i}) = \varphi_{i} (R^{i}) \quad \text{for every } i=1,\ldots,m, \notag \\
 & \text{and } (\mathbf{x}, \mathbf{y}, \eta) \in \mathbf{\hat{S}} \text{ arbitrary.} \notag 
\end{flalign}
\end{subequations}
This is equivalent to 
\begin{subequations}
\label{KKT}
\begin{flalign}
& \sum_{i=1}^{m} \bigg[ (-\nabla u(A(\mathbf{\bar{x}} - \mathbf{\bar{y}}))A^{i} - \lambda_{i} (\bar{P}^{i})^{\top}) \left(x^{i}-\bar{x}^{i} \right) \bigg] + \sum_{i=1}^{m} \bigg[ (q_{i} - (\mu^{i})^{\top} b^{i}) \left( \eta _{i}-\bar{\eta}_{i} \right) \bigg] \nonumber \\
& \quad + \sum_{i=1}^{m} \bigg[ (\nabla u(A(\mathbf{\bar{x}} - \mathbf{\bar{y}})) A^{i} + \gamma_{i} \lambda_{i} (\bar{P}^{i})^{\top} + (\mu^{i})^{\top}) \left( y^{i} - \bar{y}^{i} \right) \bigg] \geq 0, \medskip  \label{KKT_a} \\
& (\mu^{i})^{\top} \left( \bar{y}^{i} - \bar{\eta}_{i} b^{i} \right) = 0, \quad \text{for every } i=1,\ldots,m, \medskip  \label{KKT_b} \\
& \varphi_{i} (R^{i} + \gamma_{i} \bar{y}^{i} - \bar{x}^{i}) = \varphi_{i} (R^{i}) \quad \text{for every } i=1,\ldots,m, \nonumber \\
& \text{and } (\mathbf{x},\mathbf{y},\eta) \in \mathbf{\hat{S}} \text{ arbitrary.}  \label{KKT_c}
\end{flalign}
\end{subequations}

Since the first inequality holds for all $x^{i}, y^{i} \in \mathbb{R}_{+}^{n_{i}}$ and all $\eta \in [0,1]^{m}$, the previous conditions can be equivalently studied on each market. Therefore, for each mar\-ket $i=1,\ldots,m$, the following conditions must hold:
\begin{flalign}
& \left( - \nabla u(A(\mathbf{\bar{x}} - \mathbf{\bar{y}})) A^{i} - \lambda_{i} (\bar{P}^{i})^{\top} \right) \left( x^{i} - \bar{x}^{i} \right) \geq 0 \quad \text{for all } x^{i} \geq 0,  \label{eq:01} \\
& \left( \nabla u(A(\mathbf{\bar{x}} - \mathbf{\bar{y}})) A^{i} + \gamma_{i} \lambda_{i} (\bar{P}^{i})^{\top} + (\mu^{i})^{\top} \right) \left( y^{i} - \bar{y}^{i} \right) \geq 0\quad \text{for all }y^{i} \geq 0,  \label{eq:02} \\
& \left( q_{i} - (\mu^{i})^{\top} b^{i} \right) \left( \eta_{i} - \bar{\eta}_{i} \right)
\geq 0 \quad \text{for all } \eta_{i} \geq 0,  \label{eq:03} \\
& ~ (\mu^{i})^{\top} (\bar{y}^{i} - \bar{\eta}_{i} b^{i}) = 0 , \label{eq:04}
\end{flalign}
and equivalently on each asset for the first two equations,
\begin{flalign}
& \left( -\sum_{k=1}^{n} D_{k}u((A(\mathbf{\bar{x}} - 
\mathbf{\bar{y}})) A_{kj}^{i} - \lambda_{i} \bar{P}_{j}^{i} \right) \left(x_{j}^{i} - \bar{x}_{j}^{i} \right) \geq 0 \quad \text{for all } x_{j}^{i} \geq 0,
\label{KKT_eq_a} \\
&   \left( \sum_{k=1}^{n} D_{k}u (A(\mathbf{\bar{x}} - \mathbf{\bar{y}})) A_{kj}^{i} + \gamma_{i} \lambda_{i} \bar{P}_{j}^{i} + \mu_{j}^{i} \right) \left( y_{j}^{i} - \bar{y}_{j}^{i} \right) \geq 0 ~ \text{ for all } y_{j}^{i} \geq 0, \label{KKT_eq_b} \\
& \, \left( q_{i} - (\mu^{i})^{\top} b^{i} \right) \left( \eta _{i} - \bar{\eta}_{i} \right) \geq 0 ~ \text{for all }\eta_{i} \geq 0,  \label{KKT_eq_c} \\
& ~ (\mu^{i})^{\top} (\bar{y}^{i} - \bar{\eta}_{i} b^{i}) = 0,  \label{KKT_eq_d}
\end{flalign}
for every $j = 1,\ldots,n_{i}$. 

In general, if $\bar{\eta}_{i} > 0$, then $\eta _{i}-\bar{\eta}_{i}$ can be   indistinctly strictly positive or strictly negative so, from \eqref{KKT_eq_c} and \eqref{KKT_eq_d}, we get 
\begin{align}
 & q_{i} - (\mu^{i})^{\top} b^{i} = 0, \label{mu_KKT_eq} \\
 & \hspace{0.6cm} \, (\mu^{i})^{\top} \bar{y}^{i} = \bar{\eta}_{i}  (\mu^{i})^{\top} b^{i}. \label{mu_KKT_eq_2}
\end{align}

On the other hand, if $\bar{\eta}_{i}=0$, then from \eqref{KKT_eq_c} we get $(q_{i} - (\mu^{i})^{\top} b^{i}) \eta_{i} \geq 0$ for all $\eta_{i} \geq 0$, thus \eqref{KKT_eq_c} is equivalent to
\begin{equation}\label{mu_KKT_ineq}
 q_{i} - (\mu^{i})^{\top} b^{i} \geq 0.
\end{equation}
Therefore, we can distinguish two cases based on the activation of the market. \\

\noindent $(i)$: When $\bar{\eta}_{i} = 0$, i.e., the market $i$ is inactive, thus Remark \ref{rem:mu} implies that $\bar{x}^{i} = \bar{y}^{i} = 0$. As previously discussed, condition \eqref{KKT_eq_c} is equivalent to \eqref{mu_KKT_eq},  while condition \eqref{KKT_eq_d} is automatically satisfied.

On the other hand, conditions \eqref{KKT_eq_a} and \eqref{KKT_eq_b} are equivalent to
\begin{equation*}
\begin{array}{c} 
- \nabla u(\mathbf{0}) A^{i} - \lambda_{i} (\bar{P}^{i})^{\top} \geq 0, \\ 
\nabla u(\mathbf{0}) A^{i} + \gamma_{i} \lambda_{i} (\bar{P}^{i})^{\top} + (\mu^{i})^{\top} \geq 0.
\end{array}
\label{200325}
\end{equation*}
From the first inequality, we have $-\lambda_{i} (\bar{P}^{i})^{\top} \geq \nabla u(\mathbf{0}) A^{i}$, which implies:
\begin{equation*}
-\lambda_{i} \bar{P}_{j}^{i} \geq \sum_{k=1}^{n} D_{k}u(\mathbf{0}) A_{kj}^{i} \quad \text{for every } j \in \{1,\ldots,n_{i}\}.
\end{equation*}

Since $D_{k}u(0) \geq 0$, $\bar{P}_{j}^{i} \geq 0$ by hypotheses \eqref{p:utility} and \eqref{p:market}, and $A_{kj}^{i} \geq 0$ by de\-fi\-ni\-tion, it follows that $\lambda_{i}\leq 0$. Consequently, \eqref{200325} can be rewritten as
\begin{equation*}
|\lambda _{i}| (\bar{P}^{i})^{\top} \geq \nabla u(\mathbf{0}) A^{i} \geq \gamma_{i} |\lambda_{i}| (\bar{P}^{i})^{\top} - (\mu^{i})^{\top}.
\end{equation*}
This proves that the KKT system \eqref{KKT_system} is equivalent to  \eqref{KKT_etai_0}. \\

\noindent $(ii)$: When $\bar{\eta}_{i} \not = 0$, we distinguish traded assets indices. For each $j \in \{1,\ldots,n_{i}\}$  and based on the complementary condition \eqref{def:comple}, we have the fo\-llo\-wing cases:
\begin{itemize}
\item If $\bar{x}_{j}^{i} = \bar{y}_{j}^{i} = 0$, then
\begin{subequations}
\label{KKT_eq}
\begin{eqnarray*}
-\sum_{k=1}^{n} D_{k}u(A(\mathbf{\bar{x}} - \mathbf{\bar{y}})) A_{kj}^{i} - \lambda_{i} \bar{P}_{j}^{i} &\geq& 0 \text{, } \\ \sum_{k=1}^{n}   D_{k}u(A(\mathbf{\bar{x}} - \mathbf{\bar{y}})) A_{kj}^{i} + \gamma_{i} \lambda_{i} \bar{P}_{j}^{i} + \mu_{j}^{i} &\geq& 0 \text{, }
\end{eqnarray*}
and, following the same reasoning as before, that is equivalent to  
\end{subequations}
\begin{equation}\label{C1}
\lvert \lambda_{i} \rvert \bar{P}_{j}^{i} \geq \sum_{k=1}^{n} D_{k}u((A(\mathbf{\bar{x}} - \mathbf{\bar{y}})) A_{kj}^{i}\geq \gamma _{i} \lvert \lambda_{i} \rvert \bar{P}_{j}^{i} + \mu_{j}^{i}.  
\end{equation}

\item If $\bar{x}_{j}^{i} = 0$ and $\bar{y}_{j}^{i} > 0$, then \eqref{KKT_eq_a} and \eqref{KKT_eq_b} are equivalent to 
\begin{eqnarray}\label{1903254}
 - \sum_{k=1}^{n} D_{k}u(A(\mathbf{\bar{x}} - \mathbf{\bar{y}})) A_{kj}^{i} - \lambda_{i} \bar{P}_{j}^{i} &\geq& 0\text{, } \\ \nonumber \sum_{k=1}^{n} D_{k}u(A(\mathbf{\bar{x}} - \mathbf{\bar{y}})) A_{kj}^{i} + \gamma_{i} \lambda_{i} \bar{P}_{j}^{i} + \mu_{j}^{i} &=& 0.
\end{eqnarray}
As before, from the first equation we deduce that $\lambda_{i} \leq 0$. Furthermore, the inequality follows from the equality since
\begin{equation*}
 \sum_{k=1}^{n} D_{k}u(A(\mathbf{\bar{x}} - \mathbf{\bar{y}})) A_{kj}^{i} = \gamma_{i} \lvert \lambda_{i} \rvert \bar{P}_{j}^{i} - \mu_{j}^{i} \leq \gamma_{i} \lvert \lambda_{i} \rvert \bar{P}_{j}^{i} \leq \lvert \lambda_{i} \rvert \bar{P}_{j}^{i} = - \lambda_{i} \bar{P}_{j}^{i},
\end{equation*}
thus \eqref{1903254} is equivalent to 
\begin{equation}\label{C2}
 \sum_{k=1}^{n}D_{k}u(A(\mathbf{\bar{x}} - \mathbf{\bar{y}})) A_{kj}^{i} + \gamma_{i} \lambda_{i} \bar{P}_{j}^{i} + \mu_{j}^{i}=0.
\end{equation}

\item If $R_{j}^{i} > \bar{x}_{j}^{i}>0$ and $\bar{y}_{j}^{i}=0$, then \eqref{KKT_eq_a} and \eqref{KKT_eq_b} are equivalent to 
\begin{eqnarray}\label{1903256}
 - \sum_{k=1}^{n} D_{k}u(A(\mathbf{\bar{x}} - \mathbf{\bar{y}})) A_{kj}^{i} - \lambda_{i} \bar{P}_{j}^{i} &=& 0\text{, } \\ \nonumber \sum_{k=1}^{n} D_{k}u(A(\mathbf{\bar{x}} - \mathbf{\bar{y}})) A_{kj}^{i} + \gamma_{i} \lambda_{i} \bar{P}_{j}^{i} + \mu_{j}^{i} &\geq& 0. \notag
\end{eqnarray}

Using \eqref{p:utility} in the first equation, we deduce that $\lambda_{i}\leq 0$, and 
\begin{equation}
 \sum_{k=1}^{n} D_{k}u(A(\mathbf{\bar{x}} - \mathbf{\bar{y}})) A_{kj}^{i} = \lvert \lambda_{i} \rvert \bar{P}_{j}^{i} \geq \gamma_{i} \lvert \lambda_{i} \rvert \bar{P}_{j}^{i} \geq \gamma_{i} \lvert \lambda_{i} \rvert \bar{P}_{j}^{i} - \mu_{j}^{i}. 
\label{C3}
\end{equation}

\item If $\bar{x}_{j}^{i} = R_{j}^{i}$ and $\bar{y}_{j}^{i} = 0$, then this situation cannot hold for every asset, that is, there exists $k \neq j$ such that $x_{k}^{i} = R_{k}^{i}$. Indeed, if otherwise $\bar{x}^{i} = R^{i}$, then $\varphi_{i} (R^{i}) = \varphi_{i} (0)$, which contradicts \eqref{p:market}. This implies $\lambda_{i} \leq 0$. 

Since in this case, \eqref{KKT_eq_a} and \eqref{KKT_eq_b} are equivalent to 
\begin{eqnarray}\nonumber
- \sum_{k=1}^{n} D_{k}u(A(\mathbf{\bar{x}}-\mathbf{\bar{y}})) A_{kj}^{i} - \lambda_{i} \bar{P}_{j}^{i} &\leq& 0, \\ \nonumber \sum_{k=1}^{n} D_{k}u(A(
\mathbf{\bar{x}} - \mathbf{\bar{y}})) A_{kj}^{i} + \gamma_{i} \lambda_{i} \bar{P}_{j}^{i} + \mu_{j}^{i} &\geq& 0,
\end{eqnarray}
then, the condition reduces to 
\begin{equation}\label{C4}
 \sum_{k=1}^{n} D_{k}u(A(\mathbf{\bar{x}} -\mathbf{\bar{y}})) A_{kj}^{i} \geq |\lambda _{i}| \bar{P}_{j}^{i}.
\end{equation}
\end{itemize}

Finally, relation \eqref{KKT_etai_1} follows from \eqref{C1}, \eqref{C2}, \eqref{C3}, \eqref{C4}, and condition \eqref{KKT_eq_a}.
\end{proof}

We analyze the KKT conditions of Theorem \ref{nec:KKT} below.

\begin{remark} \label{rem:yi}
Some useful insights can be derived from the KKT conditions. Consider, for instance, part {\it (a)} of Theorem \ref{nec:KKT}. Under the same assumptions as in this theorem, the optimality system yields the following bound for the multipliers:
\begin{equation} \label{eq:multiplier_bound}
 \mu_{j}^{i} \leq \frac{q_{i}}{b_{j}^{i}}, ~ \forall ~ j \in \{1,\dots,n_{i}\}, ~ \forall ~ i \in \{1,\dots,m\}.
\end{equation} 
The core result of part {\it (a)} of Theorem \ref{nec:KKT} is equation \eqref{KKT_etai_0}, which characterizes the optimality of refraining from trading entirely. From this condition, it becomes clear that the no-trade region expands with larger positive values of the multipliers. Moreover, inequality \eqref{eq:multiplier_bound} shows that each multiplier is bounded above by a constant proportional to the gas fee and inversely proportional to the activation bound. Therefore, the gas fees enlarge the no-trading region, which is a reasonable effect. Further discussion on the conditions that favor no trading will be continued in Section \ref{sec:04}.

 Furthermore, part {\it (b)} of Theorem \ref{nec:KKT} corresponds to the case $\bar{\eta}_{i} > 0$. There, the KKT condition $(\mu^{i})^{\top} (\bar{y}^{i} - \bar{\eta}_{i} b^{i}) = 0$ is equi\-va\-lent to
\begin{equation*}
 \mu_{j}^{i} \bar{y}_{j}^{i} = \mu_{j}^{i} \bar{\eta}_{i} b_{j}^{i},
\end{equation*}
Consequently, the optimal trades associated with strictly positive
multipliers are determined, 
\begin{equation}
 \mu _{{j}}^{i}>0 ~ \Longrightarrow ~ \bar{y}_{{j}}^{i} = \bar{\eta}_{{i}} b_{{j}}^{i}.
\label{eq:posmu}
\end{equation} 
\end{remark}

\begin{remark} \label{rem:KRZ}
 The technical assumption \eqref{p:y} has important financial consequences:
 \begin{itemize}
\item[$(i)$] {\rm Integer Solution Penalty:} The condition $\mathbf{b}-\mathbf{\bar{y}} > 0$ in \eqref{p:y} penalizes integer solutions when $\bar{\eta}_{i},$ $q_{i}>0$, since it necessarily implies $\bar{\eta}_{i} \neq 1$ for all $i \in \{1,\ldots ,m\}$. In this sense, under assumption \eqref{p:y}, every solution $(\mathbf{\bar{x}},\mathbf{\bar{y}}, \bar{\eta}) \in \mathbf{\hat{S}}$ of problem \eqref{rGF} is a KKT point by Theorem \ref{nec:KKT}. Therefore, if $\bar{\eta}_{i} > 0$, it follows from \eqref{eq:posmu} that $\bar{\eta}_{{i}} \neq 1$ (unless $\mu ^{i} = 0$). In this case $q_{i}=0$ by condition \eqref{KKT_etai_1}, which corresponds to a market with no gas fee. Hence, for a market with a gas fee and trade, necessarily $\bar{\eta}_{{i}} \neq 1$ and at least one trade satisfies $\bar{y}_{{j}}^{i} = \bar{\eta}_{i} b_{j}^{i}$. However, let us emphasize that this does not necessarily imply that all positive trades are saturated, i.e., $\bar{y}_{{j}}^{i} = \bar{\eta}_{{i}}b_{{j}}^{i}$; in fact, numerical examples show situations with trades $\bar{y}_{{j}}^{i} \in (0, \bar{\eta}_{i} b_{{j}}^{i})$. 

\item[$(ii)$] {\rm Tender Size Limit:} The condition $b_j^i - \bar{y}_j^i \leq \frac{2}{\gamma_i} R_j^i$ imposes an upper bound on the maximum tender size. This holds particularly when $b^i \leq \frac{2}{\gamma_i} R^i$.
\end{itemize}
\end{remark}

\subsection{Sufficient Optimality Conditions}\label{subsec:3-2}

We provide sufficient conditions without convexity a\-ssump\-tions neither on the utility function $u$ nor the trade functions $\varphi _{i}$. To that end, let $K \subseteq \mathbb{R}^{n}$ and $C \subseteq \mathbb{R}^{m}$ be two nonempty sets and $g: \mathbb{R}^{n} \rightarrow \mathbb{R}^{m}$ be a differentiable mapping. 

It is said that $g$ is $C$-quasiconvex at $\overline{x} \in K$ with respect to $K$ if for all $x\in K$ (see \cite[Definition 5.12]{jahn2020introduction}), the following implication holds:
\begin{equation}\label{C:quasiconvex}
 g(x) - g(\bar{x}) \in C ~ \Longrightarrow ~ \nabla g(\bar{x}) (x - \bar{x}) \in C. 
\end{equation}

A sufficient optimality condition is given in the next result.

\begin{thr} \label{theo:suff:KKT}
 Assume that $f$ is pseudoconcave (thus $-f$ is pseudoconvex) and $\varphi_{i}$ is quasilinear for every $i=1,\ldots,m$. If $(\mathbf{\bar{x}}, \mathbf{\bar{y}}, \bar{\eta}) \in \mathbf{\hat{S}}$ verifies optimality system \eqref{KKT_etai_0} and \eqref{KKT_etai_1}, then it solves problem \eqref{rP}.
\end{thr}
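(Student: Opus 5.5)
The proof will be the standard generalized-convexity sufficiency argument, in the spirit of \cite[Theorem 5.14]{jahn2020introduction}: a KKT point is a global minimizer when the objective is pseudoconvex and the constraints are cone-quasiconvex. The statement says ``$f$ pseudoconcave''. Since \eqref{rP} minimizes $f=(-u\circ A)(\mathbf{x}-\mathbf{y})+\langle q,\eta\rangle$, I read this as pseudoconcavity of the maximization objective $u(\Psi)-\langle q,\eta\rangle$, i.e.\ pseudoconvexity of $f$, and I will use it in that form.

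\textbf{Step 1: recover the variational inequality.} From the componentwise conditions \eqref{KKT_etai_0} and \eqref{KKT_etai_1}, reassemble the multiplier inequality \eqref{KKT_a}. Set $\lambda_i:=-\alpha_i$ and keep the same $\mu^i$. Then reverse the case analysis of Theorem \ref{nec:KKT}, going through the four asset cases (both zero, $\bar y_j^i>0$, interior $\bar x_j^i$, $\bar x_j^i=R_j^i$) and the two activation cases. This shows that each scalar inequality \eqref{KKT_eq_a}--\eqref{KKT_eq_c} holds for every admissible $x_j^i,y_j^i,\eta_i$. Summing gives
\[
\nabla f(\bar z)(z-\bar z)+\sum_i(\mu^i)^\top\nabla G_i(\bar z)(z-\bar z)+\sum_i\lambda_i\nabla H_i(\bar z)(z-\bar z)\ge 0
\]
for all $z=(\mathbf x,\mathbf y,\eta)\in\mathbf{\hat S}$, with $\bar z=(\mathbf{\bar x},\mathbf{\bar y},\bar\eta)$. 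Complementarity \eqref{KKT_b} is part of the system.

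\textbf{Step 2: kill the constraint terms at a feasible $z$.} Take any feasible $z$ of \eqref{rP}.
\begin{itemize}
\item For $G_i$: this map is affine, so $\nabla G_i(\bar z)(z-\bar z)=G_i(z)-G_i(\bar z)$. Multiplying by $\mu^i\ge0$, using $G_i(z)\le0$ and $(\mu^i)^\top G_i(\bar z)=0$, gives $(\mu^i)^\top\nabla G_i(\bar z)(z-\bar z)\le 0$.
\item For $H_i$: both points are feasible, so $\varphi_i(R^i+\gamma_i y^i-x^i)=\varphi_i(R^i)=\varphi_i(R^i+\gamma_i\bar y^i-\bar x^i)$. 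The quasilinearity characterization \eqref{char:quasilinear}, applied to $\varphi_i$ at the two points $w=R^i+\gamma_iy^i-x^i$ and $\bar w=R^i+\gamma_i\bar y^i-\bar x^i$, gives $(\bar P^i)^\top(w-\bar w)=0$. Hence $\nabla H_i(\bar z)(z-\bar z)=0$. This is exactly the statement that $H_i$ is $\{0\}$-quasiconvex in the sense of \eqref{C:quasiconvex}.
\end{itemize}
To use \eqref{char:quasilinear} we need $w,\bar w$ to lie in a convex set on which $\varphi_i$ is quasilinear. I take that to be $\mathbb{R}^{n_i}_+$, the domain where $\varphi_i$ is defined. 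Feasibility of the points gives $w,\bar w\ge 0$ because $x^i\le R^i$.

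\textbf{Step 3: conclude.} Steps 1 and 2 together give $\nabla f(\bar z)(z-\bar z)\ge0$. Pseudoconvexity of $f$, via \eqref{pseudoconvex}, then yields $f(z)\ge f(\bar z)$. So $\bar z$ solves \eqref{rP}.

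\textbf{Main obstacle.} I expect Step 1 to be the hardest. The theorem's system is stated only as the reduced inequalities, and they have to be shown equivalent to the full variational inequality, not just implied by it. For example, in case (a) one needs $\lambda_i\le0$ and the inequality for all $\eta_i\in[0,1]$. In the saturated case $\bar x_j^i=R_j^i$ one needs the sign of $(x_j^i-\bar x_j^i)\le0$. In the case $\bar y_j^i>0$, where the equality holds, no sign condition is needed. I will check each case explicitly, mirroring the necessity proof line by line.
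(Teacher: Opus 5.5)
Your Steps~2--3 are the paper's proof with Jahn's Corollary~5.15 unpacked. The paper verifies $\widehat C$-quasiconvexity componentwise: item $(i)$ is the pseudoconvexity step, $(ii)$ uses that $G_i$ is affine, and $(iii)$ is your quasilinearity argument verbatim. The corollary then performs your cancellation of the constraint terms for feasible $z$. Your reading of ``pseudoconcave'' is the intended one: the remark closing Section~\ref{sec:03} says explicitly that $f=-u\circ A(\mathbf x-\mathbf y)+\langle q,\eta\rangle$ is assumed pseudoconvex.

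The gap is Step~1. The paper never carries it out (it applies the corollary as though \eqref{KKT_etai_0}--\eqref{KKT_etai_1} were the multiplier rule \eqref{KKT-1}), and it fails against the system as printed.

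\textbf{Sign error.} The second line of \eqref{KKT_etai_1} carries $+\mu^i_j$, whereas \eqref{C2} with $\lambda_i=-\alpha_i$ gives $\sum_k D_ku\,A^i_{kj}=\gamma_i\alpha_i\bar P^i_j-\mu^i_j$. With the printed sign, the $y^i_j$-coefficient in \eqref{KKT_a} is $2\mu^i_j$ rather than $0$. So if $\bar y^i_j>0$ and $\mu^i_j>0$, the choice $y^i_j=0$ violates \eqref{KKT_a}. The same slip in the first line is harmless, since it only strengthens that inequality.

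This is not cosmetic. Take one market with $n=n_1=2$, $A^1=I$, $u(z)=z_1+z_2$, $\varphi(w)=w_1+w_2$, $R=b=(1,1)$, $\gamma=0.9$, $q=0.1$. All data are linear, so the generalized-convexity hypotheses hold under either reading. The feasible point $\bar x=(0,0.45)$, $\bar y=(0.5,0)$, $\bar\eta=0.5$ satisfies the printed \eqref{KKT_etai_1} with $\alpha=1$, $\mu=(0.1,0)$, and the data also satisfy \eqref{p:market} and \eqref{def:comple}. Yet its value $u-q\bar\eta=-0.1$ is below the no-trade value $0$.

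\textbf{Missing sign hypothesis.} Even with the sign corrected, the reduced system records only one side of each complementarity pair. For $\bar y^i_j>0$ you still need the $x^i_j$-coefficient condition $\alpha_i\bar P^i_j\ge\sum_kD_ku\,A^i_{kj}$, because $\bar x^i_j=0$. For $\bar x^i_j>0$ you need the $y^i_j$-coefficient condition $\sum_kD_ku\,A^i_{kj}\ge\gamma_i\alpha_i\bar P^i_j-\mu^i_j$. Both follow from $(1-\gamma_i)\alpha_i\bar P^i_j+\mu^i_j\ge 0$. That inequality is guaranteed only through $\bar P^i\ge0$, i.e.\ \eqref{p:market}, extended to the boundary by continuity. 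So your remark that no sign condition is needed when $\bar y^i_j>0$ is wrong.

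\textbf{Further issues.} The case split presupposes \eqref{def:comple}. In case $(a)$, the gradient must be taken at $A(\mathbf{\bar x}-\mathbf{\bar y})$, not at $\mathbf 0$, unless that vector vanishes. Neither \eqref{p:market} nor \eqref{def:comple} is a hypothesis of this theorem; both appear only in Theorem~\ref{charact:KKT}.

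\textbf{Repair.} The clean fix is to assume \eqref{KKT-1} itself; Step~1 then disappears and your Steps~2--3 are a complete proof. Otherwise, add \eqref{p:market} and \eqref{def:comple}, correct the sign of $\mu^i_j$, and evaluate $\nabla u$ at $A(\mathbf{\bar x}-\mathbf{\bar y})$; your case-by-case reconstruction then goes through.
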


\begin{proof}
 We apply \cite[Corollary 5.15]{jahn2020introduction}. In first place, given the set
\begin{equation}
 \widehat{C} := (\mathbb{R}_{-} \backslash \{0\}) \times \! \left( \! - \! \left( \prod\limits_{i=1}^{m} \mathbb{R}_{+}^{n_{i}} \! \right) \! + \cone(\{G(\mathbf{\bar{x}}, \mathbf{\bar{y}}, \bar{\eta})\}) - \cone(\{G(\mathbf{\bar{x}}, \mathbf{\bar{y}}, \bar{\eta})\}) \! \right) \times \{0\}, \notag
\end{equation}
let us prove that $(-f,G,H)$ is $\widehat{C}$-quasiconvex at $(\mathbf{\bar{x}}, \mathbf{\bar{y}}, \bar{\eta})$ with respect to $\mathbf{\hat{S}}$. That is, we will prove that for every $\left( \mathbf{x}, \mathbf{y}, \eta \right) \in \mathbf{\hat{S}}$, we have
\begin{align}
 & \hspace{0.9cm} (- f, G, H) (\mathbf{x}, \mathbf{y}, \eta) - (- f, G, H) (\mathbf{\bar{x}}, \mathbf{\bar{y}}, \bar{\eta}) \in \widehat{C} \notag \\ 
 & \Longrightarrow ~ \nabla (-f, G, H)(\mathbf{\bar{x}}, \mathbf{\bar{y}}, \bar{\eta}) \left( \mathbf{x - \bar{x}}, \mathbf{y-\bar{y}}, \eta - \bar{\eta} \right) \in \widehat{C}. \label{for:suff}
\end{align}

Relation \eqref{for:suff} can be expressed by components as follows:
\begin{itemize}
 \item[$(i)$] If $-f(\mathbf{x}, \mathbf{y}, \eta) + f(\mathbf{\bar{x}}, \mathbf{\bar{y}}, \bar{\eta}) \in \mathbb{R}_{-} \backslash \{0\}$, then
 $$-\nabla f(\mathbf{\bar{x}}, \mathbf{\bar{y}}, \bar{\eta}) \left( \mathbf{x - \bar{x}}, \mathbf{y - \bar{y}}, \eta - \bar{\eta} \right) \in \mathbb{R}_{-} \backslash \{0\}.$$

 \item[$(ii)$] If
 \begin{eqnarray}\nonumber
 G_{i} (\mathbf{x}, \mathbf{y},\eta) - G_{i} (\mathbf{\bar{x}}, \mathbf{\bar{y}}, \bar{\eta}) \in \\ \nonumber
 -  \left( \prod\limits_{i=1}^{m} \mathbb{R}_{+}^{n_{i}} \right)  + \cone(\{G_{i}(\mathbf{\bar{x}}, \mathbf{\bar{y}}, \bar{\eta})\}) - \cone(\{G_{i} (\mathbf{\bar{x}}, \mathbf{\bar{y}}, \bar{\eta})\}),
 \end{eqnarray}
 then
 \begin{eqnarray}\nonumber
 \nabla G_{i} (\mathbf{\bar{x}}, \mathbf{\bar{y}}, \bar{\eta}) (\mathbf{x-\bar{x}}, \mathbf{y-\bar{y}}, \eta - \bar{\eta}) \in \\ \nonumber
 - \left( \prod\limits_{i=1}^{m} \mathbb{R}_{+}^{n_{i}} \right) + \cone(\{G_{i}(\mathbf{\bar{x}}, \mathbf{\bar{y}}, \bar{\eta})\}) - \cone(\{G_{i}(\mathbf{\bar{x}}, \mathbf{\bar{y}}, \bar{\eta})\})
 \end{eqnarray}
 for every $i=1,\ldots,m$.

 \item[$(iii)$] If $H_{i} (\mathbf{x}, \mathbf{y}, \eta) = H_{i} (\mathbf{\bar{x}}, \mathbf{\bar{y}}, \bar{\eta})$, then $\nabla H_{i} (\mathbf{\bar{x}},\mathbf{\bar{y}}, \bar{\eta}) (\mathbf{x} - {\bf \bar{x}}, \mathbf{y} - {\bf \bar{y}}, \eta -\bar{\eta}) = 0$ for every $i=1,\ldots,m$.
\end{itemize}

Then, let us prove the previous conditions one by one: 
\begin{itemize}
 \item[$(i)$:] Let $(\mathbf{x}, \mathbf{y}, \eta) \in \mathbf{\hat{S}}$ ($\mathbf{x} \neq \mathbf{y}$ always). Then,  since $f$ is pseudoconcave  
 \begin{align*}
-f(\mathbf{x}, \mathbf{y}, \eta ) +  f(\mathbf{\bar{x}}, \mathbf{\bar{y}}, \bar{\eta}) & \, \in -\mathbb{R}_{-} \backslash \{0\} 
 \Longleftrightarrow - f(\mathbf{\bar{x}}, \mathbf{\bar{y}}, \bar{\eta}) > - f(\mathbf{x}, \mathbf{y}, \eta) \\
 & \overset{\eqref{pseudoconvex}}{\Longrightarrow} - \nabla f(\mathbf{\bar{x}}, \mathbf{\bar{y}}, \bar{\eta}) (\mathbf{x-\bar{x}}, \mathbf{y-\bar{y}}, \eta - \bar{\eta}) < 0 \\
 & \Longleftrightarrow - \nabla f(\mathbf{\bar{x}}, \mathbf{\bar{y}}, \bar{\eta}) (\mathbf{x-\bar{x}}, \mathbf{y-\bar{y}}, \eta -\bar{\eta}) \in -\mathbb{R}_{-} \backslash \{0\},
\end{align*}
thus $(i)$ holds.

 \item[$(ii)$:] Let $(\mathbf{x}, \mathbf{y}, \eta) \in \mathbf{\hat{S}}$. Since $G_{i}$ is convex (affi\-ne), we have
 \begin{align*}
 \nabla G_{i}(\mathbf{\bar{x}}, \mathbf{\bar{y}}, \bar{\eta})(\mathbf{x-\bar{x}}, \mathbf{y-\bar{y}}, \eta -\bar{\eta}) & = G_{i} (\mathbf{x}, \mathbf{y}, \eta) - G_{i} (\mathbf{\bar{x}}, \mathbf{\bar{y}}, \bar{\eta}) \\ 
 & = y^{i} - \bar{y}^{i} - \left( \eta _{i} - \bar{\eta}_{i} \right) b^{i},
\end{align*}
for every $(\mathbf{x}, \mathbf{y}, \eta) \in \mathbf{\hat{S}}$, thus the
implication is obvious.

 \item[$(iii)$:] Let $(\mathbf{x}, \mathbf{y}, \eta) \in \mathbf{\hat{S}}$. Then, applying the quasilinearity of $\varphi _{i}$, we have
 \begin{equation*}
 \begin{aligned}
 & H_{i} (\mathbf{x}, \mathbf{y}, \eta) = H_{i} (\mathbf{\bar{x}}, \mathbf{\bar{y}}, \bar{\eta}) \, \Longleftrightarrow
 \varphi_{i} (R^{i}+ \gamma_{i}y^{i}-x^{i}) = \varphi_{i} (R^{i}+ \gamma_{i} \bar{y}^{i} - \bar{x}^{i}) \\
 & \overset{\eqref{char:quasilinear}}{\Longrightarrow} \nabla \varphi_{i} (R^{i} + \gamma_{i} \bar{y}^{i} - \bar{x}^{i}) (\gamma_{i} (y^{i} - \bar{y}^{i}) - (x^{i} -\bar{x}^{i})) = 0 \\
 & \Longleftrightarrow
 \gamma_{i} \nabla \varphi_{i}(R^{i} + \gamma_{i} \bar{y}^{i} - \bar{x}^{i}) (y^{i} - \bar{y}^{i})
 - \nabla \varphi_{i}(R^{i} + \gamma_{i} \bar{y}^{i} - \bar{x}^{i}) (x^{i} - \bar{x}^{i})= 0 \\
 & \Longleftrightarrow \nabla H_{i}(\mathbf{\bar{x}}, \mathbf{\bar{y}}, \bar{\eta}) (\mathbf{x-\bar{x}}, \mathbf{y-\bar{y}}, \eta - \bar{\eta}) = 0,
 \end{aligned}
\end{equation*}
and $(iii)$ holds.
\end{itemize}

Therefore, $(-f,G,H)$ is $\widehat{C}$-quasiconvex at $(\mathbf{\bar{x}, \bar{y},} \bar{\eta})$ and, consequently, if KKT conditions \eqref{KKT_etai_0} and \eqref{KKT_etai_1} are verified for that point, then the point $(\mathbf{\bar{x}, \bar{y},} \bar{\eta})$ is op\-ti\-mal for problem \eqref{rP} by \cite[Corollary 5.15]{jahn2020introduction}. 
\end{proof}

As a consequence of Theorems \ref{nec:KKT} and \ref{theo:suff:KKT} we can establish a general cha\-rac\-terization of optimal points for problem \eqref{rGF}.

\begin{thr} \label{charact:KKT}
 Assume that properties \eqref{p:Rb}, \eqref{p:utility}, and \eqref{p:market} are verified, $f$ is pseudoconcave, and $\varphi_{i}$ is quasilinear for every $i=1,\ldots,m$. Suppose that $(\mathbf{\bar{x}}, \mathbf{\bar{y}}, \bar{\eta}) \in \mathbf{\hat{S}}$ satisfies \eqref{def:comple} and \eqref{p:y}. Then, the following assertions are equi\-va\-lent:
\begin{itemize}
 \item[$(a)$] $(\mathbf{\bar{x}}, \mathbf{\bar{y}}, \bar{\eta})$ solves problem \eqref{rP}.

 \item[$(b)$] $(\mathbf{\bar{x}}, \mathbf{\bar{y}}, \bar{\eta})$ verifies the optimality system \eqref{KKT_etai_0} and \eqref{KKT_etai_1}.
\end{itemize}
\end{thr}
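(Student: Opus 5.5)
The equivalence follows by combining Theorem \ref{nec:KKT} and Theorem \ref{theo:suff:KKT}; the only real work is to check that the hypotheses of each are met under the standing assumptions of the present statement.

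For $(a)\Rightarrow(b)$, we assume that $(\mathbf{\bar{x}}, \mathbf{\bar{y}}, \bar{\eta})$ solves \eqref{rP} and that it satisfies \eqref{def:comple}. Properties \eqref{p:Rb}, \eqref{p:utility}, and \eqref{p:market} are assumed. Whenever the point is not the origin, \eqref{p:y} is also assumed. These are exactly the hypotheses of Theorem \ref{nec:KKT}.

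\begin{itemize}
\item Through Proposition \ref{lm:KRZ}, these hypotheses give the KRZ constraint qualification \eqref{KRZ} at the point.
\item Theorem \ref{nec:KKT} then provides multipliers $\mu\in\prod_i\mathbb{R}^{n_i}_+$ and $\alpha\in\mathbb{R}^m_+$, with $\alpha_i=|\lambda_i|$.
\item For each market $i$, these multipliers satisfy \eqref{KKT_etai_0} if $\bar\eta_i=0$ and \eqref{KKT_etai_1} if $\bar\eta_i\neq0$.
\end{itemize}

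The generalized convexity assumptions play no role in this direction.

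For $(b)\Rightarrow(a)$, we invoke Theorem \ref{theo:suff:KKT}. It requires only pseudoconcavity of $f$ in the sense used there, quasilinearity of each $\varphi_i$, and feasibility of the point in $\mathbf{\hat{S}}$; all three are assumed here.

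The step needing care is to check that the explicit system \eqref{KKT_etai_0}--\eqref{KKT_etai_1} really encodes the abstract multiplier rule \eqref{KKT-1}. That rule is the one used in \cite[Corollary 5.15]{jahn2020introduction}. The check consists of reversing the computations in the proof of Theorem \ref{nec:KKT}.

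\begin{itemize}
\item Set $\lambda_i:=-\alpha_i\le 0$.
\item Going asset by asset, the inequalities and equalities in each case of \eqref{KKT_etai_1} give \eqref{KKT_eq_a} and \eqref{KKT_eq_b} for all admissible $x^i_j$ and $y^i_j$. The four cases are: both $\bar x^i_j$ and $\bar y^i_j$ zero; $\bar y^i_j>0$; $0<\bar x^i_j<R^i_j$; and $\bar x^i_j=R^i_j$. The complementarity \eqref{def:comple} guarantees that these cases exhaust all indices.
\item The condition $q_i-(\mu^i)^\top b^i=0$, or $\ge0$ when $\bar\eta_i=0$, gives \eqref{KKT_eq_c}.
\item Summing over $i$ and $j$ recovers the variational inequality \eqref{KKT_a}. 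Together with complementary slackness \eqref{KKT_b} and the feasibility $H_i=0$, this is \eqref{KKT-1}.
\end{itemize}

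The main obstacle is this last reconstruction, because the equivalences in the proof of Theorem \ref{nec:KKT} were partly one-directional. In particular, case (a) absorbs the condition \eqref{KKT_eq_d}, and in the boundary case $\bar x^i_j=R^i_j$ the variation $x^i_j-\bar x^i_j$ is nonpositive. I would handle this by checking each sign explicitly using $0\le x^i\le R^i$ and $y^i\ge0$. In case (a) I would use $\bar x^i=\bar y^i=0$ from Remark \ref{rem:mu}, so that \eqref{KKT_eq_d} holds trivially.

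Once \eqref{KKT-1} is established, Theorem \ref{theo:suff:KKT} yields optimality, and the two implications together prove the equivalence.
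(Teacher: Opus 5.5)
Your proposal is correct and follows the paper's route: the paper gives no separate argument, and states the theorem as a direct consequence of Theorem \ref{nec:KKT} for (a)$\Rightarrow$(b) and Theorem \ref{theo:suff:KKT} for (b)$\Rightarrow$(a). The reconstruction of \eqref{KKT-1} that you single out is not needed here, because Theorem \ref{theo:suff:KKT} already takes \eqref{KKT_etai_0}--\eqref{KKT_etai_1} as its hypothesis; if you do carry it out, the case $\bar{x}^{i}_{j}=0<\bar{y}^{i}_{j}$ only works with the sign of \eqref{C2}, i.e.\ $\sum_{k} D_{k}u\,A^{i}_{kj}=\gamma_{i}\alpha_{i}\bar{P}^{i}_{j}-\mu^{i}_{j}$, since with the $+\mu^{i}_{j}$ printed in \eqref{KKT_etai_1} you cannot recover \eqref{KKT_eq_b} when $\mu^{i}_{j}>0$.
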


  In the next remark, we analyze the scope of the approximate problem $(\mathcal{P}^{r} (\mathbf{b}, q))$ in relation to problem $(\mathcal{P}(\mathbf{b},q))$. 
\begin{remark} \label{rem:comparison}

 The approximate solution $(\mathbf{\bar{x}},\mathbf{\bar{y}},\bar{\eta})\in 
\mathbf{S}$ to the relaxed problem $(\mathcal{P}^{r}(\mathbf{b},q))$ differs
from a solution $(\mathbf{\tilde{x}},\mathbf{\tilde{y}},\tilde{\eta})$ of
the original problem $(\mathcal{P}(\mathbf{b},q))$ in different aspects. For
instance, this approximation could not incorporate the fixed gas fee penalty because $\bar{\eta}_{i}$ does not necessarily belong to $\{0,1\}$. In fact, if equation \eqref{p:y} is satisfied, then we saw in Remark \ref{rem:KRZ} that $\bar{\eta}_{i}<1$ for all $i\in\{1,\ldots,m\}$. In this case, the associated fee $\sum_{i=1}^{m}q_{i}\bar{\eta}_{i}$ does not match the true fee $\sum_{\tilde{\eta}_{i}>0}q_{i}$, which can lead to utility trade-offs.   

To properly compare these solutions, let us consider any feasible point 
\begin{equation*}
(\mathbf{x},\mathbf{y},\eta )\in \prod_{i=1}^{m}[0,R^{i}]\times
\prod_{i=1}^{m}[0,b^{i}]\times \left\{ 0,1\right\} ^{m}
\end{equation*}%
of $(\mathcal{P}(\mathbf{b},q))$, which by construction is also feasible for 
$(\mathcal{P}^{r}(\mathbf{b},q))$. For such points, the following inequality
holds: 
\begin{equation*}
-u(A(\mathbf{\bar{x}-\bar{y}}))+\sum_{i=1}^{m}q_{i}\bar{\eta}_{i}\leq -u(A(%
\mathbf{x-y}))+\sum_{i=1}^{m}q_{i}\eta _{i}.
\end{equation*}%
That is, 
\begin{align*}
-u(A(\mathbf{\bar{x}-\bar{y}}))& \,-(-u(A(\mathbf{x-y})))\leq
\sum_{i=1}^{m}q_{i}\left( \eta _{i}-\bar{\eta}_{i}\right) =\sum_{\eta
_{i}>0}q_{i}\left( 1-\bar{\eta}_{i}\right) -\sum_{\eta _{i}=0}q_{i}\bar{\eta}%
_{i} \\
\ & \leq q_{\max }\sum_{\eta _{i}>0}1-q_{\min }\sum_{i=1}^{m}\bar{\eta}%
_{i}\,=\,q_{\max }\Vert \eta \Vert _{0}-q_{\min }\Vert \bar{\eta}\Vert _{1}
\\
& =q_{\max }(\Vert \eta \Vert _{0}-\Vert \bar{\eta}\Vert _{1})+(q_{\max
}-q_{\min })\Vert \bar{\eta}\Vert _{1},
\end{align*}%
where $q_{\max }=\underset{i=1,\ldots ,m}{\max }q_{i}$, $q_{\min }=\underset{%
i=1,\ldots ,m}{\min }q_{i}$, $\Vert \eta \Vert _{0}=\#\{i:\eta _{i}>0\}$ is
the $\ell _{0}$ pseudo-norm, and $\Vert \eta \Vert _{1}=\sum_{i=1}^{m}|\eta
_{i}|$ is the $\ell _{1}$-norm. In terms of maximizing the utility, this
corresponds to 
\begin{equation}
u(A(\mathbf{\bar{x}-\bar{y}}))\geq u(A(\mathbf{x-y}))-(q_{\max }(\Vert \eta
\Vert _{0}-\Vert \bar{\eta}\Vert _{1})+(q_{\max }-q_{\min })\Vert \bar{\eta}%
\Vert _{1}).  \label{eq:general_bound}
\end{equation}%
This holds for every $(\mathbf{x}, \mathbf{y}, \eta )$. In particular, we can take $(\mathbf{x}, \mathbf{y})$ to be the solution to the following utility maximization problem, which considers the active markets given by $\eta $, that is,
\begin{equation}
\begin{aligned} &\text{maximize} && u(\Psi) \\ &\text{subject to} && \Psi =
\sum_{i=1}^{m} A^{i} (x^{i} - y^{i}), \\ &&& y^{i} \leq \mathbf{1}_{\{\eta >
0\}} b^{i}, \\ &&& \varphi_{i}(R^{i} + \gamma_{i} y^{i} - x^{i}) =
\varphi_{i}(R^{i}), \\ &&& 0 \leq x^{i} \leq R^{i}, \quad 0 \leq y^{i}, \
\forall ~ i \in \{1, \ldots, m\}, \\ \end{aligned} 
\tag{${\mathcal{P}}^{\eta
}({\mathbf{b}},q)$}  \label{P_eta}
\end{equation}%
where 
\begin{equation*}
1_{\{\eta >0\}}:=\left\{ 
\begin{array}{cc}
1 & \eta _{i}>0, \\ 
0 & \text{otherwise.}%
\end{array}%
\right. 
\end{equation*}%
If $\varepsilon =q_{\max }(\Vert \eta \Vert _{0}-\Vert \bar{\eta}\Vert_{1}) + (q_{\max} - q_{\min }) \Vert \bar{\eta}\Vert _{1}>0$, then $(\mathbf{\bar{x}}, \mathbf{\bar{y}})$ corresponds to an $\varepsilon $-approximate solution of the problem. If $\varepsilon \leq 0$, then the solution corresponds to the exact case. Let $(\mathbf{\tilde{x}}, \mathbf{\tilde{y}})$ denotes the solution to problem $(\mathcal{P}^{\bar{\eta}}(\mathbf{b}, q))$, that is, we allow transactions provided that $\bar{\eta}_{i}>0$. Then,  
by \eqref{eq:general_bound}, we have 
\begin{equation}
 u(A(\mathbf{\tilde{x}}-\mathbf{\tilde{y}}))\geq u(A(\mathbf{\bar{x}}-\mathbf{\bar{y}})) \geq u(A(\mathbf{\tilde{x}} - \mathbf{\tilde{y}}))-\varepsilon (q, \bar{\eta})\ ,  \label{eq:prescribed}
\end{equation}%
where  
\begin{equation}
 \varepsilon (q,\bar{\eta}) := q_{\max }(\Vert \bar{\eta}\Vert _{0} - \Vert \bar{\eta} \Vert_{1}) + (q_{\max }-q_{\min })\Vert \bar{\eta}\Vert _{1}.
\label{def:epseta}
\end{equation}
 
As $\Vert \bar{\eta}\Vert _{\infty }\leq 1$, $\Vert \bar{\eta}\Vert_{0} - \Vert \bar{\eta} \Vert _{1}\geq 0$ since each $|\bar{\eta}_{i}|\leq 1$  
implies $\Vert \bar{\eta}\Vert _{1}\leq \Vert \bar{\eta}\Vert _{0}$, therefore $\varepsilon (q,\bar{\eta})\geq 0$ and $(\mathbf{\bar{x%
}},\mathbf{\bar{y}})$ constitutes an $\varepsilon (q,\bar{\eta})$-approximate solution to $(\mathcal{P}^{\bar{\eta}} (\mathbf{b}, q))$. Depending on the magnitude of $\varepsilon (q, \bar{\eta})$, this approximate solution may be acceptable for problem $(\mathcal{P}^{\bar{\eta}}(\mathbf{b}, q))$. 

We formalize this observation in the following result.
\end{remark}
 
 \begin{thr} \label{theo:value} 
 Let $(\mathbf{\breve{x}}, \mathbf{\breve{y}}, \breve{\eta}) \in \prod_{i=1}^{m}[0,R^{i}] \times \prod_{i=1}^{m}[0,b^{i}] \times \{0,1\}^{m}$ and $(\mathbf{\bar{x}}, \mathbf{\bar{y}}, \bar{\eta}) \in \prod_{i=1}^{m}[0,R^{i}]\times \prod_{i=1}^{m}[0,b^{i}] \times [0,1]^{m}$ be solutions to the problems $(\mathcal{P}^{r}(\mathbf{b},q))$ and 
$(\mathcal{P} (\mathbf{b},q))$, respectively. Let $(\mathbf{\tilde{x}}, \mathbf{\tilde{y}}, 1_{\left\{ \bar{\eta} > 0 \right\}}) \in \prod_{i=1}^{m}[0,R^{i}] \times \prod_{i=1}^{m}[0,b^{i}] \times \{0, 1\}^{m}$ where $(\mathbf{\tilde{x}},\mathbf{\tilde{y}})$ corresponds to solution of problem $(\mathcal{P}^{\bar{\eta}}(\mathbf{b},q))$.  Then, 
\begin{equation}\label{eq:ep-sol}
 \left\vert h(\mathbf{ \tilde{x}}, \mathbf{ \tilde{y}}, \bar{\eta}) - h(\mathbf{\breve{x}}, \mathbf{\breve{y}}, \breve{\eta}) \right\vert \leq \varepsilon (q, \bar{\eta}),
\end{equation}
where $h(\mathbf{x},\mathbf{y}, \eta ) := u(A(\mathbf{x} - \mathbf{y})) - \sum_{\eta _{i} > 0} q_{i}$, corresponds to the objective functional of problem $(\mathcal{P} (\mathbf{b}, q))$, that is, $(\mathbf{\tilde{x}}, \mathbf{\tilde{y}}, 1_{\left\{ \bar{\eta} > 0\right\}})$ is an $\varepsilon (q, \bar{\eta})$-approximate solution to problem $(\mathcal{P}(\mathbf{b}, q))$, where $\varepsilon (q,\bar{\eta}) = q_{\max} \left( \Vert \bar{\eta}\Vert_{0}-\Vert  \bar{\eta} \Vert _{1}\right) +\left( q_{\max} -q_{\min} \right) \Vert \bar{\eta} \Vert_{1}$. 
\end{thr}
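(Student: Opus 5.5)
The plan is a sandwich argument. Write $V^{r}$, $V$ and $V^{\bar\eta}$ for the optimal values of $(\mathcal{P}^{r}(\mathbf{b},q))$, $(\mathcal{P}(\mathbf{b},q))$ and $(\mathcal{P}^{\bar{\eta}}(\mathbf{b},q))$. Note the labels in the statement: $(\mathbf{\breve{x}},\mathbf{\breve{y}},\breve{\eta})$ is the integer (mixed-integer) solution and $(\mathbf{\bar{x}},\mathbf{\bar{y}},\bar{\eta})$ is the relaxed one. Since $h(\mathbf{\breve{x}},\mathbf{\breve{y}},\breve{\eta})$ is the optimum of $(\mathcal{P}(\mathbf{b},q))$ and $(\mathbf{\tilde{x}},\mathbf{\tilde{y}},1_{\{\bar\eta>0\}})$ is feasible for it, one direction is immediate:
\[
h(\mathbf{\tilde{x}},\mathbf{\tilde{y}},\bar{\eta}) \leq h(\mathbf{\breve{x}},\mathbf{\breve{y}},\breve{\eta}).
\]
Here $h(\cdot,\cdot,\bar\eta)$ only depends on $\bar\eta$ through $\{\bar\eta_i>0\}$. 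Feasibility needs a check: $\tilde y^{i}\le b^{i}$ on active markets, $\tilde y^{i}=0$ on inactive ones, and the invariant constraints hold by definition of $(\mathcal{P}^{\bar\eta}(\mathbf{b},q))$. So it remains to bound $h(\mathbf{\breve{x}},\mathbf{\breve{y}},\breve{\eta}) - h(\mathbf{\tilde{x}},\mathbf{\tilde{y}},\bar{\eta})$ from above by $\varepsilon(q,\bar\eta)$.

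For the upper bound I chain through the relaxation. Every feasible point of $(\mathcal{P}(\mathbf{b},q))$ is feasible for $(\mathcal{P}^{r}(\mathbf{b},q))$, and for integer $\eta$ we have $\langle q,\eta\rangle=\sum_{\eta_i>0}q_i$. Hence
\[
h(\mathbf{\breve{x}},\mathbf{\breve{y}},\breve{\eta}) \le u(A(\mathbf{\bar{x}}-\mathbf{\bar{y}}))-\langle q,\bar\eta\rangle .
\]
Next, $(\mathbf{\bar{x}},\mathbf{\bar{y}})$ is feasible for $(\mathcal{P}^{\bar\eta}(\mathbf{b},q))$. Indeed, $\bar y^{i}\le\bar\eta_i b^{i}\le 1_{\{\bar\eta_i>0\}}b^{i}$, and if $\bar\eta_i=0$ then $\bar y^{i}=0$ (Remark \ref{rem:mu}). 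This gives $u(A(\mathbf{\bar{x}}-\mathbf{\bar{y}}))\le u(A(\mathbf{\tilde{x}}-\mathbf{\tilde{y}}))$, which is the left inequality of \eqref{eq:prescribed}. Combining the two,
\[
h(\mathbf{\breve{x}},\mathbf{\breve{y}},\breve{\eta}) - h(\mathbf{\tilde{x}},\mathbf{\tilde{y}},\bar{\eta}) \le \sum_{\bar\eta_i>0} q_i(1-\bar\eta_i).
\]

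The final step is to bound this sum by $\varepsilon(q,\bar\eta)$, using the same manipulation as in Remark \ref{rem:comparison}:
\[
\sum_{\bar\eta_i>0} q_i(1-\bar\eta_i)\le q_{\max}\big(\Vert\bar\eta\Vert_0-\Vert\bar\eta\Vert_1\big)\le \varepsilon(q,\bar\eta).
\]
The last inequality holds because $(q_{\max}-q_{\min})\Vert\bar\eta\Vert_1\ge0$. Together with the first paragraph this gives \eqref{eq:ep-sol}, and with it the $\varepsilon(q,\bar\eta)$-approximate optimality of $(\mathbf{\tilde{x}},\mathbf{\tilde{y}},1_{\{\bar\eta>0\}})$.

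The only delicate point is the bookkeeping of feasibility between the three problems, especially the inclusion of the relaxed solution in $(\mathcal{P}^{\bar\eta}(\mathbf{b},q))$. That relies on Remark \ref{rem:mu} for inactive markets and on existence of solutions to $(\mathcal{P}^{\bar\eta}(\mathbf{b},q))$, which I take as given by the same standard results. No optimality conditions from Theorems \ref{nec:KKT}--\ref{charact:KKT} are needed.
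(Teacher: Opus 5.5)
Your proof is correct and follows essentially the same route as the paper. Both use the sandwich $h^{r}(\mathbf{\bar{x}},\mathbf{\bar{y}},\bar{\eta})\ge h(\mathbf{\breve{x}},\mathbf{\breve{y}},\breve{\eta})\ge h(\mathbf{\tilde{x}},\mathbf{\tilde{y}},\bar{\eta})$, then $u(A(\mathbf{\bar{x}}-\mathbf{\bar{y}}))\le u(A(\mathbf{\tilde{x}}-\mathbf{\tilde{y}}))$ to reduce the gap to $\sum_{\bar{\eta}_i>0}(1-\bar{\eta}_i)q_i\le\varepsilon(q,\bar{\eta})$. You also make explicit the feasibility facts the paper uses without comment, and your direct bound $q_{\max}(\Vert\bar{\eta}\Vert_{0}-\Vert\bar{\eta}\Vert_{1})$ is slightly sharper than $\varepsilon(q,\bar{\eta})$. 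Note that $\bar{y}^{i}=0$ for $\bar{\eta}_i=0$ already follows from $0\le\bar{y}^{i}\le\bar{\eta}_i b^{i}$, so Remark~\ref{rem:mu} is not needed there.
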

\begin{proof}
Taking into account the optimality of $(\mathbf{\bar{x}}, \mathbf{\bar{y}}, \bar{\eta})$ and $(\mathbf{\breve{x}}, \mathbf{\breve{y}}, \breve{\eta})$, 
\begin{equation}
u(A(\mathbf{\bar{x}-\bar{y}}))-\sum\limits_{i=1}^{m}\bar{\eta}_{i}q_{i}\geq
u(A(\mathbf{\breve{x}}-\mathbf{\breve{y}}))-\sum\limits_{\breve{\eta}%
_{i}>0}q_{i}\geq u(A(\mathbf{\tilde{x}-\tilde{y}}))-\sum\limits_{\bar{\eta}%
_{i}>0}q_{i},  \label{eq:approx}
\end{equation}%
that we can rewrite 
\begin{equation*}
h^{r}(\mathbf{\bar{x}},\mathbf{\bar{y}},\bar{\eta})\geq h(\mathbf{\breve{x}},%
\mathbf{\breve{y}},\breve{\eta})\geq h(\mathbf{\tilde{x}},\mathbf{\tilde{y}},%
\bar{\eta}),
\end{equation*}%
where $h^{r}(\mathbf{\bar{x}},\mathbf{\bar{y}},\bar{\eta}):=u(A(\mathbf{\bar{%
x}-\bar{y}}))-\sum\limits_{i=1}^{m}\bar{\eta}_{i}q_{i}.$ From this, 
\begin{equation}
|h(\mathbf{\breve{x}},\mathbf{\breve{y}},\breve{\eta})-h(\mathbf{\tilde{x}},%
\mathbf{\tilde{y}},\bar{\eta})\rvert \leq \lvert h^{r}(\mathbf{\bar{x}},%
\mathbf{\bar{y}},\bar{\eta})-h(\mathbf{\tilde{x}},\mathbf{\tilde{y}},\bar{%
\eta})\rvert .  \label{280425}
\end{equation}%
Now, since $u(A(\mathbf{\bar{x}-\bar{y}}))\leq u(A(\mathbf{\tilde{x}-\tilde{y%
}}))$, we get  
\begin{eqnarray*}
\left\vert h^{r}(\mathbf{\bar{x}},\mathbf{\bar{y}},\bar{\eta})-h(\mathbf{%
\tilde{x}},\mathbf{\tilde{y}},\bar{\eta})\right\vert  &=&h^{r}(\mathbf{\bar{x%
}},\mathbf{\bar{y}},\bar{\eta})-h(\mathbf{\tilde{x}},\mathbf{\tilde{y}},\bar{%
\eta}) \\
&=&u(A(\mathbf{\bar{x}-\bar{y}}))-u(A(\mathbf{\tilde{x}-\tilde{y}}%
))+\sum\limits_{\bar{\eta}_{i}>0}q_{i}-\sum\limits_{\bar{\eta}_{i}>0}\bar{%
\eta}_{i}q_{i} \\
&\leq &\sum\limits_{\bar{\eta}_{i}>0}(1-\bar{\eta}_{i})q_{i}
\end{eqnarray*}%
From Remark \ref{rem:comparison}, $\sum\limits_{\bar{\eta}_{i}>0}(1-%
\bar{\eta}_{i})q_{i}\leq \varepsilon (q,\bar{\eta})$, using \eqref{280425}
we finally get%
\begin{equation*}
 \left\vert h^{r}(\mathbf{\bar{x}}, \mathbf{ \bar{y}}, \bar{\eta}) - h(\mathbf{\tilde{x}}, \mathbf{\tilde{y}},\bar{\eta}) \right\vert \leq \varepsilon (q, \bar{\eta}).
\end{equation*}
\end{proof}

We finish this section with the following remark regarding the technical assumption of pseudoconvexity for the composition function $f = - u \circ A$.

\begin{remark}
In Theorems \ref{theo:suff:KKT} and \ref{charact:KKT}, we are assuming that
the function $f(\mathbf{x},\mathbf{y},\eta )=-u\circ A(\mathbf{x}-\mathbf{y}%
)+\langle q,\eta \rangle $ is pseudoconvex. This assumption is not
restrictive as we analyze below. Indeed, simplifying notation, if $v:\mathbb{R%
}^{m}\rightarrow $ $\mathbb{R}$ is pseudoconvex ($v\equiv -u$) and $A:%
\mathbb{R}^{n}\rightarrow \mathbb{R}^{m}$ is a linear operator, then the
function $h:=v\circ A$ is pseudoconvex. Let $x_{1},x_{2}\in \mathbb{R}^{m}$
be such that $h(x_{1})<h(x_{2})$. Then, 
\begin{equation*}
\begin{array}{ll}
 h(x_{1})<h(x_{2}) & \Longleftrightarrow \, v(Ax_{1})<v(Ax_{2}) \\ 
 & \Longrightarrow \, \nabla v(Ax_{2})(Ax_{1}-Ax_{2})<0\text{ (}v\text{ pseudoconvex)} \\ 
 & \Longrightarrow \, \nabla v(Ax_{2})A(x_{1}-x_{2})<0 \\ 
 & \Longleftrightarrow \, \nabla h(x_{2})(x_{1}-x_{2}) < 0,
\end{array}
\end{equation*}
and $h = v \circ A$ is pseudoconvex.
\end{remark}

\section{No-trade Characterization with Gas Fees}\label{sec:04}

A no-trade condition prevents arbitrage by ensuring that for every solution $(\mathbf{\bar{x}}, \mathbf{\bar{y}}, \bar{\eta}) \in \mathbf{\hat{S}}$ to problem \eqref{GF}, we have $(\mathbf{\bar{x}}, \mathbf{\bar{y}}) = (\mathbf{0}, \mathbf{0})$, which implies that $\bar{\eta}=0$ by Remark \ref{rem:mu}. As a consequence, we have the following definition of no-trade for the optimal
routing problem.

\begin{defn}\label{def:notrade}
 It is said that a no-trade condition is verified in the optimal routing problem when $(\mathbf{\bar{x}}, \mathbf{\bar{y}, \bar{\eta}}) = (\mathbf{0}, \mathbf{0}, 0)$ is the unique solution of \eqref{GF}. 
\end{defn} 

Since $(\mathbf{0},\mathbf{0},0)$ is a feasible solution for problem %
\eqref{rP}, it is indeed a valid solution if the no-trade condition holds.
Therefore, we can apply our earlier analysis to problem \eqref{rP} in order
to derive properties under the assumption that the no-trade condition is
satisfied.

\begin{thr}\label{theo:notrade}
 Assume properties
 \eqref{p:Rb}, \eqref{p:utility}, and \eqref{p:market} are verified.  Consider the fo\-llo\-wing statements:
\begin{itemize}
 \item[$(a)$] The no-trade condition is satisfied. 

 \item[$(b)$] $\bar{P}^{i} \in K_{i}^{\gamma_{i}, q_{i}, b^{i}}$ for every $i \in \{1,\ldots,m\}$, where 
\begin{equation}
 K_{i}^{\gamma_{i},q_{i},b^{i}}:=\left\{ 
\begin{array}{l}
 z \in \mathbb{R}^{n}:\,\alpha _{i}z\geq (A^{i})^{\top }\nabla u(\mathbf{0})^{\top }\geq \gamma _{i}\alpha _{i}z-\mu^{i}, \\ 
 \text{for some }(\alpha _{i}, \mu^{i}) \in \mathbb{R}_{+} \times         \mathbb{R}_{+}^{n}\text{ and }q_{i}\geq (\mu^{i})^{\top} b^{i}  
\end{array}
\right\}. \label{new:set}
\end{equation}
\end{itemize}
Then, $(a) \Rightarrow (b)$. If, in addition, $f$ is pseudoconcave and $\varphi_{i}$ is quasilinear for every $i = 1,\ldots,m$, then $(b) \Rightarrow (a)$.
\end{thr}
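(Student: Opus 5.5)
The plan is to reduce both implications to the KKT theory of Section~\ref{sec:03}, applied at the candidate point $(\mathbf{0},\mathbf{0},0)$.

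At this point $\bar{P}^{i}=\nabla\varphi_i(R^i)=P^i$. Hypothesis \eqref{def:comple} holds trivially there, and Proposition~\ref{lm:KRZ} gives the KRZ constraint qualification at $(\mathbf{0},\mathbf{0},0)$ without needing \eqref{p:y}. The main observation is that the membership $\bar P^i\in K_i^{\gamma_i,q_i,b^i}$ is exactly a transposed restatement of \eqref{KKT_etai_0}. The row-vector inequality $\alpha_i(\bar P^i)^\top\ge\nabla u(\mathbf 0)A^i\ge\gamma_i\alpha_i(\bar P^i)^\top-(\mu^i)^\top$ is the same as $\alpha_i\bar P^i\ge (A^i)^\top\nabla u(\mathbf 0)^\top\ge\gamma_i\alpha_i\bar P^i-\mu^i$. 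The condition $q_i\ge(\mu^i)^\top b^i$ is the second line of \eqref{KKT_etai_0}.

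\emph{Step 1: $(a)\Rightarrow(b)$.} Assume the no-trade condition. As noted after Definition~\ref{def:notrade}, $(\mathbf{0},\mathbf{0},0)$ is then a solution of the relaxed problem \eqref{rP}. Strictly, Definition~\ref{def:notrade} concerns \eqref{GF}, so the transfer to \eqref{rP} needs justifying. I would take it as the paper's standing interpretation: the remark preceding the theorem asserts that $(\mathbf 0,\mathbf 0,0)$ is a valid solution of \eqref{rP} under no-trade.

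With optimality in hand, apply Theorem~\ref{nec:KKT}. Every $\bar\eta_i=0$, so only case $(a)$ occurs. We obtain $\mu^i\in\mathbb R^{n_i}_+$ and $\alpha_i\ge0$ satisfying \eqref{KKT_etai_0} for each $i$. By the transposition above, this says $\bar P^i\in K_i^{\gamma_i,q_i,b^i}$. One detail remains: $\mu^i$ lives in $\mathbb R^{n_i}_+$, whereas \eqref{new:set} writes $\mathbb R^n_+$. I would read the dimensions in \eqref{new:set} as the local ones $n_i$, consistent with $z=\bar P^i\in\mathbb R^{n_i}$.

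\emph{Step 2: $(b)\Rightarrow(a)$ under pseudoconcavity and quasilinearity.} Given $(\alpha_i,\mu^i)$ from $(b)$ for each $i$, the point $(\mathbf{0},\mathbf{0},0)$ satisfies \eqref{KKT_etai_0} for all $i$. Case \eqref{KKT_etai_1} is vacuous because no $\bar\eta_i$ is nonzero. Theorem~\ref{theo:suff:KKT} then shows that $(\mathbf{0},\mathbf{0},0)$ solves \eqref{rP}. Since $(\mathbf{0},\mathbf{0},0)$ is feasible for \eqref{GF} and the feasible set of \eqref{GF} is contained in that of \eqref{rP}, with the same objective on it, $(\mathbf{0},\mathbf{0},0)$ also solves \eqref{GF}.

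\emph{Main obstacle: uniqueness.} Definition~\ref{def:notrade} requires $(\mathbf{0},\mathbf{0},0)$ to be the \emph{unique} solution. Suppose some other solution $(\mathbf x,\mathbf y,\eta)$ of \eqref{GF} has the same value. If $\mathbf x=\mathbf y=\mathbf 0$ but $\eta\neq0$, the computation in Remark~\ref{rem:mu} excludes it, provided the relevant $q_r>0$. If $(\mathbf x,\mathbf y)\ne(\mathbf 0,\mathbf 0)$, then some $\eta_r=1$, so the value is $u(A(\mathbf x-\mathbf y))-\sum q_i\eta_i$. Equality with $u(\mathbf 0)$ would then require a utility gain exactly compensating the gas fees.

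I would try to rule this out with the strict part of pseudoconcavity, arguing that equal values force a degenerate direction. I expect this to be the delicate point, because it may need $q_i>0$, or an interpretation of no-trade as ``$(\mathbf 0,\mathbf 0,0)$ is optimal''. Failing a clean argument, I would state the conclusion as optimality of the no-trade point, with uniqueness under $q>0$ via Remark~\ref{rem:mu}.
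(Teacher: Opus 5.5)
Your reduction is the same as the paper's proof. For $(a)\Rightarrow(b)$ you use Theorem~\ref{nec:KKT} at the origin: only case $(a)$ occurs, and \eqref{KKT_etai_0} is the membership $\bar P^i\in K_i^{\gamma_i,q_i,b^i}$ after transposition, with $\mu^i\in\mathbb{R}^{n_i}_+$ as you say. For $(b)\Rightarrow(a)$ you use Theorem~\ref{theo:suff:KKT}. Your extra passage from \eqref{rP} to \eqref{GF} by inclusion of feasible sets is correct and is only implicit in the paper.

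However, the two points you flag are genuine gaps. The paper's proof has them too: it relies on the sentence preceding the theorem and never addresses uniqueness. Neither gap can be closed. Under the literal Definition~\ref{def:notrade}, both implications fail even when every hypothesis holds, including pseudoconcavity and quasilinearity. Take $m=1$, $n=n_1=2$, $A^1=I$, $\varphi_1(x)=x_1+x_2$, $\gamma_1=0.9$ and $R^1=(1,1)$, so that $\bar P^1=(1,1)$, with linear $u$.

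\emph{First example.} Let $u(z)=z_1$, $b^1=(2,2)$, $q_1=1.2$. Every point of \eqref{GF} with $\eta_1=1$ has value at most $x^1_1-1.2\le-0.2$. If $\eta_1=0$, then $y^1=0$ and the invariant forces $x^1=0$. So the origin is the unique solution of \eqref{GF}. Yet $(b)$ forces $\alpha_1\ge1$ (first coordinate) and $\mu^1_2\ge0.9\alpha_1$ (second coordinate), hence $(\mu^1)^\top b^1\ge1.8>q_1$, so $(b)$ fails. Indeed $((1.8\eta,0),(0,2\eta),\eta)$ has value $0.6\eta>0$ in \eqref{rP} for $0<\eta\le5/9$. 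So the transfer you accepted in Step~1 is false.

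\emph{Second example.} Let $u(z)=z_1+\tfrac{1}{2}z_2$, $b^1=(1,1)$, $q_1=0.4>0$. Then $(b)$ holds with $\alpha_1=1$ and $\mu^1=(0,0.4)$. But $((0.9,0),(0,1),1)$ also attains the value $0=u(\mathbf 0)$ in \eqref{GF}, and the whole segment $((0.9\eta,0),(0,\eta),\eta)$ is optimal in \eqref{rP}. So uniqueness fails even for $q>0$. Remark~\ref{rem:mu} only rules out $\mathbf{x}=\mathbf{y}=\mathbf{0}$ with $\eta\neq0$. There is also no strict pseudoconcavity to exploit, since linear $u$ is admissible.

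What your argument, and the paper's, actually establishes is that $(b)$ is equivalent to ``$(\mathbf 0,\mathbf 0,0)$ solves \eqref{rP}'', not necessarily uniquely. The forward direction needs only Theorem~\ref{nec:KKT}; the converse also needs the generalized convexity. In that case the origin also solves \eqref{GF}, by your inclusion argument. Rather than deferring to the paper's remark and leaving uniqueness open, state the result in that form; with this reading your proof is complete.
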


\begin{proof}
$(a) \Rightarrow (b)$: we apply Theorem \ref{nec:KKT} (the necessary KKT conditions). The point $(\mathbf{\bar{x}}, \mathbf{\bar{y}}, \bar{\eta}) = (\mathbf{0}, \mathbf{0},0)$ trivially satisfies \eqref{def:comple} and \eqref{p:y}, as $b^{i} >> 0$ by con\-di\-tion \eqref{p:Rb}. In this case, $\bar{\eta}_{i} = 0$ for every $i=1,\ldots,m$.

For a generic $i \in \{1,\ldots,m\}$, the following condition holds  
\begin{equation}
\begin{cases}
 \alpha_{i} (\bar{P}^{i})^{\top} \geq \nabla u(\mathbf{0}) A^{i} \geq \gamma_{i} \alpha_{i} (\bar{P}^{i})^{\top} - (\mu^{i})^{\top}, \\ 
 q_{i} - (\mu^{i})^{\top} b^{i} \geq 0. \label{no:trade}
\end{cases}
\end{equation}
and this condition is equivalent to \eqref{no:trade}.

\noindent $(b) \Rightarrow (a)$: Directly by applying Theorem \ref{theo:suff:KKT} (sufficient KKT conditions). 
\end{proof}

\begin{remark} \label{rem:gasfee}
A geometric interpretation of Theorem \ref{theo:notrade}$(b)$ is that the
set in \eqref{new:set} implies that the prices belong to: 
\begin{equation}
P^{i}\in K_{i}^{\gamma _{i}}+\prod_{j=1}^{n}\left[ -\frac{q_{i}}{b_{j}^{i}},0%
\right] ,~\forall ~i\in \{1,\ldots ,m\},  \label{reg:cone}
\end{equation}%
where $K_{i}^{\gamma _{i}}:=\{z\in \mathbb{R}^{n}:\,\gamma _{i}\nu _{i}z\leq
A_{i}\nabla u(0)\leq \nu _{i}z,\text{ for some }\nu _{i}\in \mathbb{R}\}$. Hence, each price $P^{i}$ belongs to the set $K_{i}^{\gamma _{i}}$ perturbed by an interval that depends on the fixed costs $q_{i}$ of the markets and the trade limits $b_{j}^{i}$ for all $j\in \{1,\ldots,n_{i}\}$.

This can be deduced from the second inequality in \eqref{new:set}. Indeed, from \eqref{new:set} we derive that 
\begin{equation*}
-\sum_{k=1}^{n}\mu _{k}^{i}b_{k}^{i}\geq -q_{i}\Longrightarrow -\mu
_{j}^{i}\geq -\frac{q_{i}}{b_{j}^{i}}+\sum_{k\neq j}\mu _{k}^{i}\frac{%
b_{k}^{i}}{b_{j}^{i}}\geq -\frac{q_{i}}{b_{j}^{i}},~\forall ~j=1,\ldots
,n_{i}.
\end{equation*}%
Thus,
\begin{align*}
&\alpha_{i} (\bar{P}^{i})^{\top }\geq \nabla u(0)A^{i} \geq \gamma_{i} \alpha_{i}(\bar{P}^{i})^{\top} - (\mu^{i})^{\top} \\ \overset{\eqref{no:trade}}{\implies} &\alpha_{i} \bar{P}_{j}^{i} \geq (\nabla u(0)A^{i})_{j} \geq \gamma_{i} \alpha_{i} \bar{P}_{j}^{i}-\frac{q_{i}}{b_{j}^{i}}.
\end{align*}
Then, \eqref{reg:cone} holds.
\end{remark}

\begin{remark} \label{rem:comp}
To the best of our knowledge, the no-trade characterization for the optimal routing model with gas fees is a novel contribution to the literature, even in the single-market case. Previous works have derived no-trade conditions under different assumptions:
\begin{itemize}
 \item[$(i)$] In \cite[Section~3]{OptRou-2022}, the no-trade condition is established for a convex relaxation of the optimal routing problem without gas fees, so the market conditions were simpler.

 \item[$(ii)$] For the single-market case, \cite{angeris2022constant} derives the no-trade condition for the relaxed convex problem, while \cite{ELS-1} extends this result to the original non-convex formulation.
\end{itemize}
In all cases, our model recovers the no-trade condition when the gas fee is zero, thereby extending the previous results. This also aligns with Remarks \ref{rem:yi} and \ref{rem:gasfee}, confirming that gas fees introduce an additional term in the no-trade region, consequently expanding it and preventing arbitrage opportunities.
\end{remark}

\section{Interpretations and Numerical Experiments}\label{sec:05}

In this section, we describe two numerical experiments we have performed in order to exemplify our theoretical results. Their outcomes facilitate the financial interpretation of the statements in the previous sections.

\begin{ex} 
For our first example, we study the influence of the gas fee in the no-trade
region. For that we consider a simple model with a single market $m=1$, six
tokens $n=6$, and a linear utility $u(z)=\pi ^{T}z$ based on the one proposed
in \cite[Section 5.3]{angeris2022constant}, and studied in \cite[Section 5.2]%
{ELS-1}. In the present context, we can write the problem as a particular
instance of problem \eqref{rGF}, but we omit superscripts for notational
simplification, so it reads: 
\begin{equation}
\begin{array}{ll}
\text{maximize} & \pi ^{T}(x-y)-q\eta \\ 
\text{subject to} & y-\eta b\leq 0,\  \\ 
& \varphi (R+\gamma {y}-{x})-\varphi (R)=0, \\ 
& 0\leq x\leq R,\text{ }0\leq {y^{i}},\text{ }0\leq \eta \leq 1.%
\end{array}
\label{210425}
\end{equation}%
With respect to the model in \cite{angeris2022constant}, three news elements
are considered herein: the activation variable $\eta \in \mathbb{R}$, the
gas fee $q\in \mathbb{R}$, and the tender basket bound $b\in \mathbb{R}^{6}$.
We consider the same data as in the references, a discount rate $\gamma =0.9$
and a vector of reserves 
\begin{equation*}
R=(1,3,2,5,7,6).
\end{equation*}%
Following Remark \ref{rem:KRZ}, we take $b=2 R/\gamma$ which is
consistent with the theory. The vector $\pi \in \mathbb{R}^{6}$ models the
trader private prices and, following the example given in \cite{ELS-1}, we assume
that these prices are parametrized by

\begin{equation*}
\pi =\pi (t,s)\equiv (tp_{1},sp_{2},...,p_{6})\text{ for some scalars }%
t,s\in \left[ 0,2\right] ,
\end{equation*}%
where the vector of scaled market prices $\ p\in \mathbb{R}^{6}$ \ is given by its components $p_{i}=P_{i}/P_{6}=\nabla \varphi (R)_{i}/\nabla \varphi (R)_{6}$. In this way, the market and trader prices coincide for $s=t=1$, so it is expected that the no-trade region is located around this value. We
compare two types of market functions. On one hand, the geometric mean 
\begin{equation*}
\varphi _{\mathrm{gm}}(x)=\prod\limits_{i=1}^{n}x_{i}^{1/n}
\end{equation*}%
and, on the other, the weighted quasi-arithmetic mean introduced in \cite
{ELS-1}, 
\begin{equation*}
\varphi _{\mathrm{qm}}(x)=\exp \left( W_{0}\left\{ \frac{2}{n}\left[
\sum_{i=1}^{n}(x_{i}+1)^{2}\ln (x_{i}+1)\right] \right\} \bigg/2\ \right) -1,
\end{equation*}%
where $W_{0}(\cdot)$ is the principal branch of the Lambert omega function. The motivation to introduce this market function is described in \cite{ELS-1}, and is connected to the fact that it is neither convex nor concave.

In both cases, all the hypotheses for the equivalence between {\it (a)} and {\it (b)} in the statement of Theorem \ref{theo:notrade} are verified, since $u$ is increasing and linear, thus pseudoconvex, and $\varphi _{\mathrm{%
gm}}$ and $\varphi _{\mathrm{qm}}$ are increasing and quasilinear, see \cite{ELS-1}. We numerically compute the optimal trades by solving the corresponding problem \eqref
{210425} for a equispaced grid of points partitioning $\left[ 0,2\right] \times \left[
0,2\right] $ and different values of the gas fee; then we identify when the
no-trade condition is verified. Before showing our results, we note that
some parameter values might be unrealistic in practice, as we have selected them for the sake of illustration. Numerical problems are
solved by means of the SciPy optimization library \cite{Virtanen}.

The numerical results in Figures \ref{fig:Ex1_gm_A} and \ref{fig:Ex1_qm_A} confirm that gas fees increase the size of the no-trade region in both cases. Comparing the market functions reveals that the weighted quasi-arithmetic mean generates a larger no-trade region for this particular example. In terms of the market activation function $\eta(t,s)$, the geometric mean produces a smoother profile, while the quasi-arithmetic mean exhibits a piecewise constant, nonsmooth pattern. This difference leads to more frequent market activations ($\eta(t,s) = 1$) in the neighborhood of the no-trade region boundary, as can be seen by comparing Figures \ref{fig:Ex1_gm_B} and \ref{fig:Ex1_qm_B}.
\end{ex}

\begin{ex}
In the second example, we consider an optimal routing problem with $m=5$
markets and $n=3$ tokens. In this case, we analyze the linear utility model studied in \cite[Section 4]{OptRou-2022}; in particular, we consider
the same network topology, see Figure \ref{fig:Ex2_topology}, where the markets (also the market functions and reserves) follow the Roman numbering system. The discount rates are taken to be $\gamma _{i}=0.99$, a small value in practice, but we have selected it for the sake of illustrating the no-trade condition. See Table \ref{tab:Ex2} for these as well as the rest of data. We consider
the specific linear utility problem:
\begin{equation}
\begin{array}{ll}
\text{maximize} & \pi^{T} \left( {\sum\limits_{i=1}^{5} A^{i}}{(x^{i}-y^{i})} \right) - \sum\limits_{i=1}^{5}q_{i} \eta _{i} \\ 
\text{subject to} & y^{i}\leq \eta _{i}b^{i}, \\ 
& \varphi _{i}(R^{i}+\gamma _{i}{y^{i}}-{x^{i}})=\varphi _{i}(R^{i}), \\ 
& 0\leq x^{i}\leq R^{i},\text{ }0\leq {y^{i}}, \\ 
& 0\leq \eta _{i}\leq 1,~\forall ~i\in \{1,\ldots ,5\}.\ 
\end{array}
\label{220426}
\end{equation}%
As in the previous example, we take $b^{i} = 2 R^{i}/\gamma_{i}$, which is consistent with the theory by Remark \ref{rem:KRZ}. To carry out a similar experiment as before, we consider the following vectors of local prices corresponding to each market:
\begin{align*}
 & P^{1}=(0.16881825,1.68818239,0.16881823), \\
 & P^{2}=(0.15811388,1.58113882), \\ 
 & P^{3}=(1.58113882,0.15811388), \\ 
 & P^{4}=(0.79056903,0.3162274), \\
 & P^{5}=(1,1),
\end{align*}
which have been numerically calculated. We model the trader private prices for the three different tokens as a uniparametric perturbation of the local price in Market 1, in particular
\begin{equation*}
\pi =(tP_{1}^{1},P_{2}^{1},P_{3}^{1})\text{ for some }t\in [0.2,9].
\end{equation*}
The gas fee is fixed at $q_{i} = 0.01 $. As previously, the assumptions of Theorem \ref{theo:notrade} are fulfilled, and therefore we can apply it to find out the conditions under which there is no trade. Specifically, setting $t = 1$ and using the geometric approach given by equation \eqref{reg:cone}, the no-trade condition implies that there exists a vector $(\alpha_{1}, \alpha_{2}, \alpha_{3}, \alpha_{4}, \alpha_{5}) \in \mathbb{R}_{+}^{5}$ such that the following inequalities hold for each market:

\textsc{Market 1}

\begin{equation*}
\begin{cases}
\alpha_{1} \gamma_{1} P_{1}^{1} - \frac{q_{1}}{b_{1}^{1}} \leq P_{1}^{1}
\leq \alpha_{1} P_{1}^{1}, \\ 
\alpha_{1} \gamma_{1} P_{2}^{1} - \frac{q_{2}}{b_{2}^{1}} \leq P_{2}^{1}
\leq \alpha_{1} P_{2}^{1}, \\ 
\alpha_{1} \gamma_{1} P_{3}^{1} - \frac{q_{3}}{b_{3}^{1}} \leq P_{3}^{1}
\leq \alpha_{1} P_{3}^{1}.%
\end{cases}%
\end{equation*}

\textsc{Market 2}

\begin{equation*}
\begin{cases}
\alpha_{2} \gamma_{2} P_{1}^{2} - \frac{q_{2}}{b_{1}^{2}} \leq P_{1}^{1}
\leq \alpha_{2} P_{1}^{2}, \\ 
\alpha_{2} \gamma_{2} P_{2}^{2} - \frac{q_{2}}{b_{2}^{2}} \leq P_{2}^{1}
\leq \alpha_{2} P_{2}^{2}.%
\end{cases}%
\end{equation*}

\textsc{Market 3}

\begin{equation*}
\begin{cases}
\alpha_{3} \gamma_{3} P_{1}^{3} - \frac{q_{3}}{b_{1}^{3}} \leq P_{2}^{1}
\leq \alpha_{3} P_{1}^{3}, \\ 
\alpha_{3} \gamma_{3} P_{2}^{3} - \frac{q_{3}}{b_{2}^{3}} \leq P_{3}^{1}
\leq \alpha_{3} P_{2}^{3}.%
\end{cases}%
\end{equation*}

\textsc{Market 4}

\begin{equation*}
\begin{cases}
\alpha_{4} \gamma_{4} P_{1}^{4} - \frac{q_{4}}{b_{1}^{4}} \leq P_{1}^{1}
\leq \alpha_{4} P_{1}^{4}, \\ 
\alpha_{4} \gamma_{4} P_{2}^{4} - \frac{q_{4}}{b_{2}^{4}} \leq P_{3}^{1}
\leq \alpha_{4} P_{2}^{4}.%
\end{cases}%
\end{equation*}

\textsc{Market 5}

\begin{equation*}
\begin{cases}
\alpha_{5} \gamma_{5} P_{1}^{5} - \frac{q_{5}}{b_{1}^{5}} \leq P_{1}^{1}
\leq \alpha_{5} P_{1}^{5}, \\ 
\alpha_{5} \gamma_{5} P_{2}^{5} - \frac{q_{5}}{b_{2}^{5}} \leq P_{3}^{1}
\leq \alpha_{5} P_{2}^{5}.%
\end{cases}%
\end{equation*}

Through direct computation, we verify that the parameter values $\alpha_{1} = 1$, $\alpha_{2} = \alpha_{3} = \frac{P_{1}^{2}}{P_{2}^{2}} = \frac{P_{1}^{2}}{P_{1}^{3}}\approx 1.0677$, $\alpha _{5}=P_{1}^{1}$ satisfy the no-trade
conditions for Markets 1, 2, 3, and 5, while Market 4 presents an unsolvable condition. The second inequality requires: 
\begin{equation}
\alpha _{4}\geq \max \left\{ \frac{P_{1}^{1}}{P_{1}^{4}},\frac{P_{3}^{1}}{%
P_{2}^{4}}\right\} \approx 0.53385,
\end{equation}%
which contradicts the first equation. Numerical verification confirms this
incompatibility: 
\begin{equation}\label{eq:bound} 
\alpha _{4}\gamma _{4}P_{1}^{4}-\frac{q_{4}}{b_{4}^{1}}\geq 0.53385\times
0.7115-2.25\times 10^{-4}>0.1688=P_{3}^{1}.  
\end{equation}

Consequently, in a neighborhood of $t=1$, we observe no trading activity in Markets 1, 2, 3, and 5, while arbitrage opportunities in Market 4. Numerical experiments with a 200-point grid confirms this behavior, see Figure~\ref{fig:Ex2_trades_q001_B}. Furthermore Market 5 remains inactive for $t\in \lbrack 0.2,0.9].$
This can also be checked from market activations, see Figure \ref%
{fig:Ex2_activation_markets_q001}, where $\bar{\eta}_{3}(t)\approx 0.25$ for 
$t\in \lbrack 0.2,0.9]$. We can check that no market is purely active, $\bar{%
\eta}_{i}(t)<1$, so $\bar{y}^{i}<b^{i}$ for every $i=1,\ldots,5$, and constraint qualification \eqref{KRZ} holds in the whole parameter range, while the optimality conditions are still valid. Consequently, the trade behavior follows by Remark \ref{rem:KRZ}.(i), as
numerically verified for Market 1 in Figure~\ref%
{fig:Ex2_Market1_OptimalTenderedBasket_q001}.  For Market 4, enforcing no-trade conditions requires either reducing the discount rate $\gamma _{4}$ or increasing the gas fee $q_{4}.$ In the second case, we can enforce the no-trade condition on Market 4 by taking 
\begin{equation}
 q_{4} \geq \max \left\{ \left( \frac{P_{3}^{1}}{P_{2}^{4}} \gamma_{4} P_{1}^{4} - P_{1}^{1} \right) b_{1}^{4},\left( \frac{P_{3}^{1}}{P_{2}^{4}} \gamma _{4}P_{2}^{4}-P_{3}^{1}\right) b_{2}^{4}\right\} =9.3788.
\end{equation}
Setting $q_{4}=9.4$ establishes a no-trade region, effectively deactivating Market 4, see Figure~\ref{fig:Ex2_trades_q4_9}. The numerical computation of the no-trade interval gives
\begin{equation}
\lbrack 0.907537688442211,1.0844221105527638],
\end{equation}
see Figure \ref{fig:Ex_2_Notrade_region_q4_94}. 

In a second experiment, we numerically validate Remark~\ref{rem:comparison}
and consequently Theorem~\ref{theo:value}. Given a solution $(\bar{\mathbf{x}%
}, \bar{\mathbf{y}}, \bar{\eta})$ to the relaxed problem $(\mathcal{P}^{r}(%
\mathbf{b},q))$, we compare $(\bar{\mathbf{x}}, \bar{\mathbf{y}})$ with the
solution $(\tilde{\mathbf{x}}, \tilde{\mathbf{y}})$ of the problem $(\mathcal{%
P}^{\bar{\eta}}(\mathbf{b},q))$, for which the activation of the market is determined by $%
\bar{\eta}$. We consider different constant gas fees $q_i \in \{0.01, 0.1,
0.5, 1\}$. Following Remark~\ref{rem:comparison}, the solution $(\bar{\mathbf{x}}, \bar{%
\mathbf{y}})$ is an $\varepsilon(q,\bar{\eta})$-approximate solution to $%
\mathcal{P}^{\bar{\eta}}(\mathbf{b},q)$, where 
\begin{equation}
\varepsilon(q,\bar{\eta}) := q\left(\|\bar{\eta}\|_0 - \|\bar{\eta}%
\|_1\right).
\end{equation}
This implies that the utility difference between both solutions should be
small and, by stability arguments, the trades should satisfy $(\bar{\mathbf{x%
}}, \bar{\mathbf{y}}) \approx (\tilde{\mathbf{x}}, \tilde{\mathbf{y}})$ when 
$q$ is sufficiently small for a moderate number of markets, as is the case in our
example. This result has practical significance because, by Theorem~\ref{theo:value}, 
$\left( \mathbf{\tilde{x}},\mathbf{\tilde{y}},1_{\left\{ \bar{\eta}>0\right\} }\right) $ is an $\varepsilon(q, 
\bar{\eta})$-approximate solution to the original problem $(\mathcal{P}(%
\mathbf{b},q))$, with $\varepsilon(q,\bar{\eta})$ being an a priori
computable bound. Our numerical computations, obtained under identical experimental conditions, confirm these theoretical observations. 
Figures \ref{fig:Ex2_utility_combined} and \ref{fig:Ex2_trades_comparison} demonstrate that for $q=0.01$, 
both utilities and trades are virtually identical. However, as the gas fees increase and $\varepsilon(q,\bar{\eta})$ 
becomes significantly larger, we observe noticeable divergences between these quantities. In this sense, in Figure \ref{fig:Ex2_utility_combined}, the staircase-like discontinuities in some of the plots are explained by the nature of the $\ell_0$ pseudonorm and the term $\sum\limits_{\bar{\eta}_i > 0} q_i$.
\end{ex}

\begin{figure}[htbp]
\centering
 \includegraphics[width=\linewidth,height=0.68\linewidth]{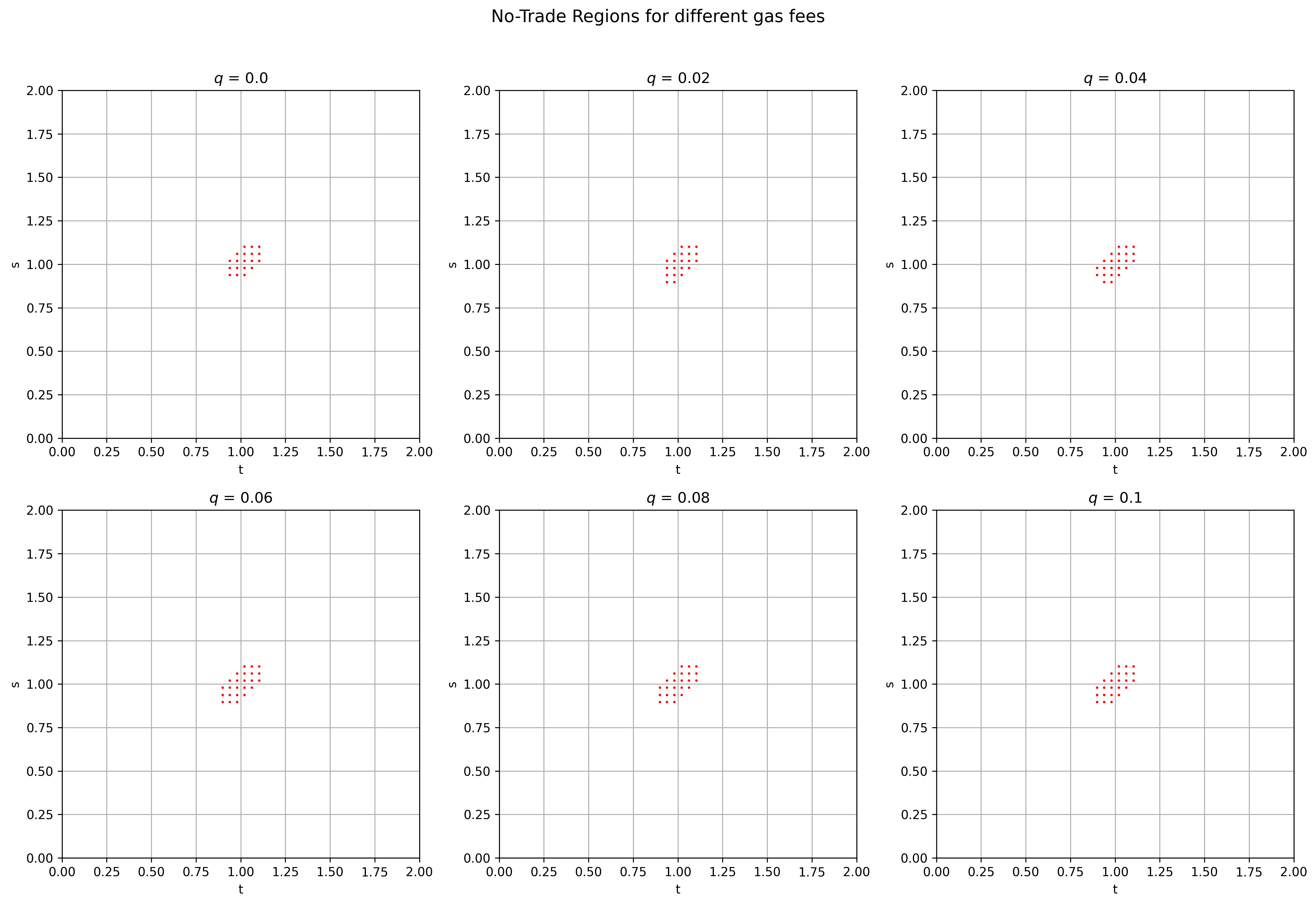}
\caption{Example 1: No-trade region for the geometric mean function $\varphi _{\mathrm{gm}}$.}
\label{fig:Ex1_gm_A}
\end{figure}

\begin{figure}[htbp]
\centering
 \includegraphics[width=\linewidth,height=0.68\linewidth]{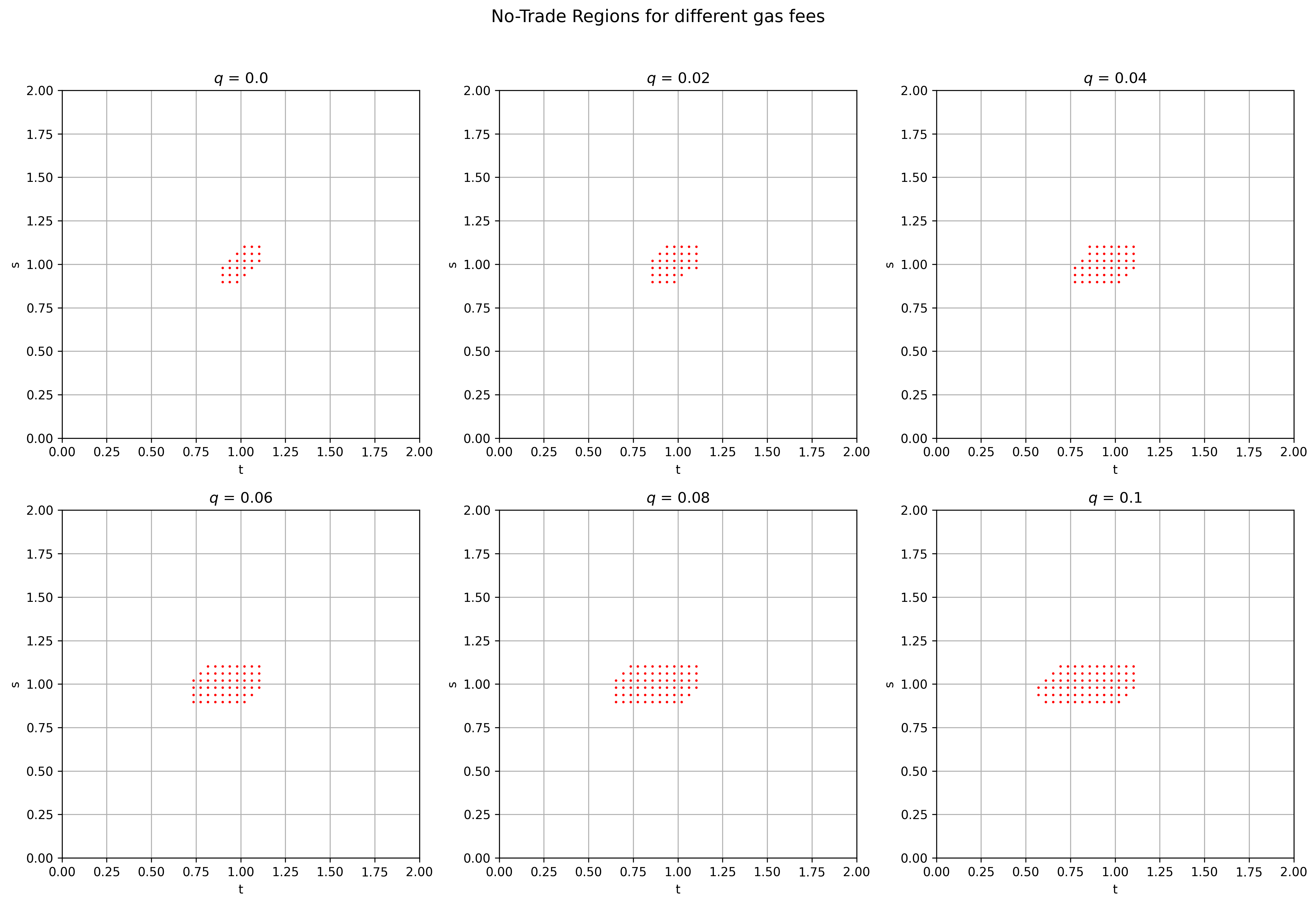}
\caption{Example 1: No-trade region for the weighted quasi-arithmetic mean function $\varphi _{\mathrm{qm}}$.}
\label{fig:Ex1_qm_A}
\end{figure}

\begin{figure}[htbp]
\centering
 \includegraphics[width=\linewidth,height=0.89\linewidth]{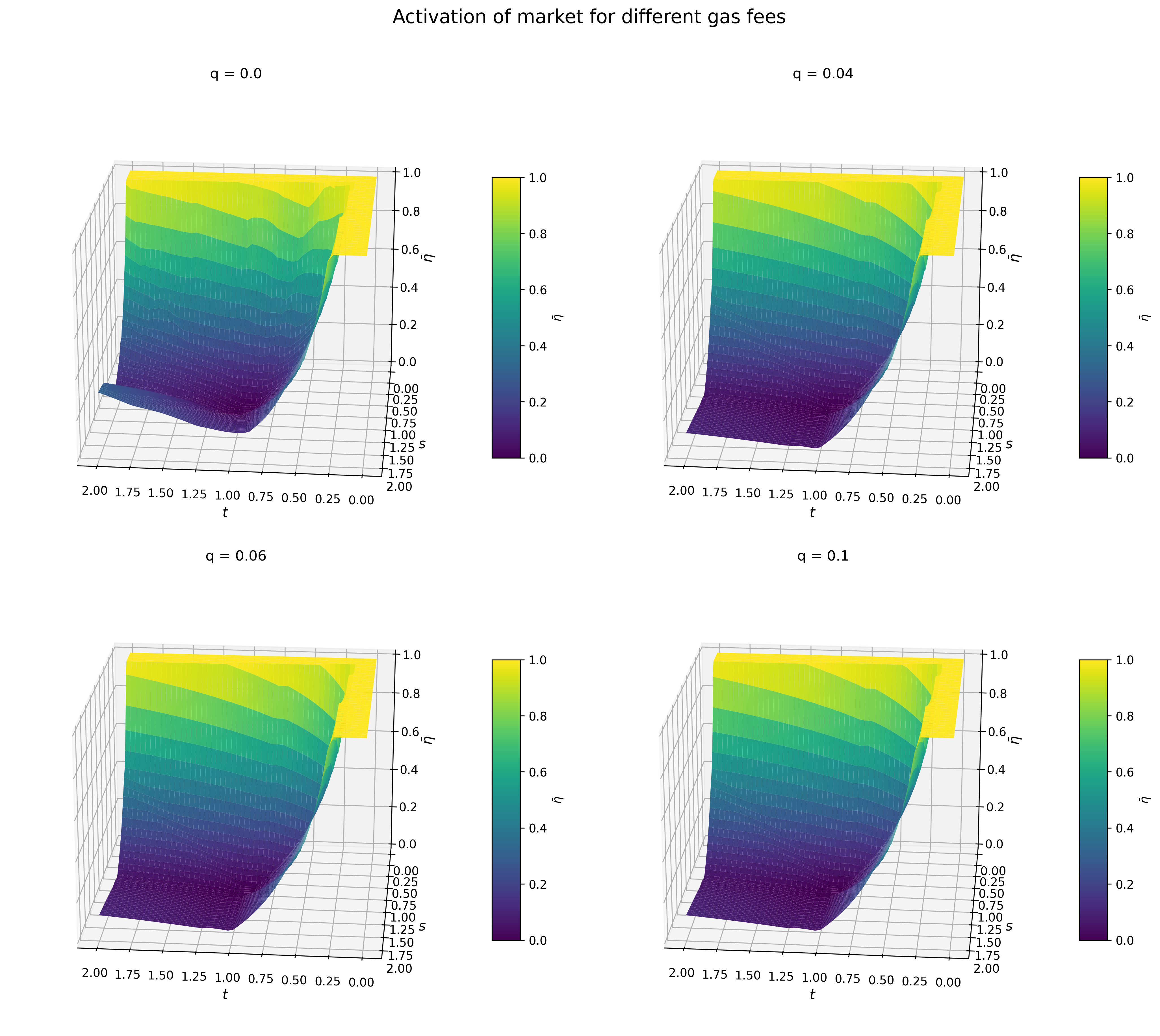}
\caption{Example 1: Activation of the market for $\varphi _{\mathrm{gm}}$.}
\label{fig:Ex1_gm_B}
\end{figure}

\begin{figure}[htbp]
\centering
 \includegraphics[width=\linewidth,height=0.89\linewidth]{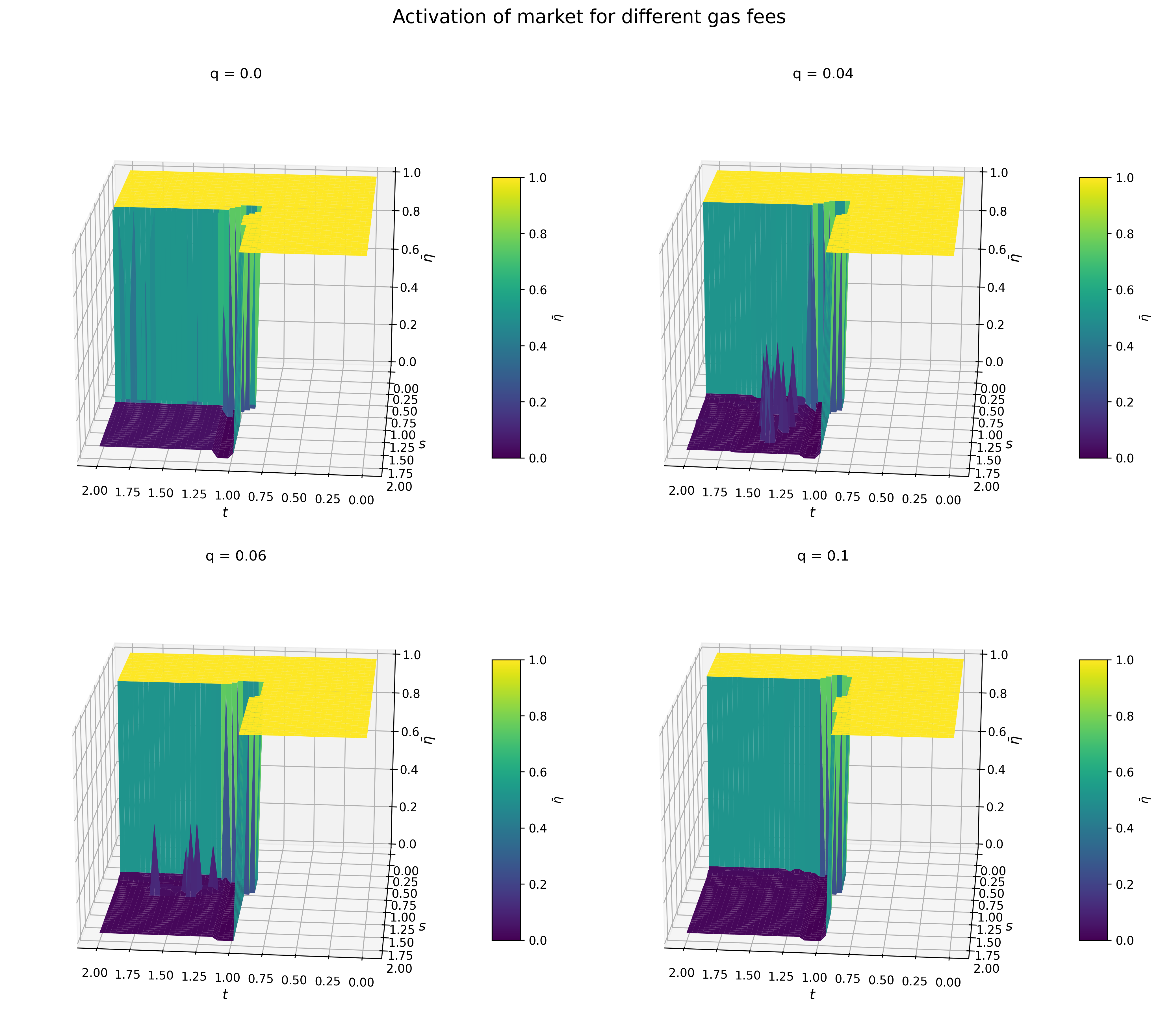}
\caption{Example 1: Activation of the market for $\varphi _{\mathrm{qm}}$.}
\label{fig:Ex1_qm_B}
\end{figure}

\begin{figure}[htbp]
\centering
 \includegraphics[width=0.8\linewidth,height=\linewidth]{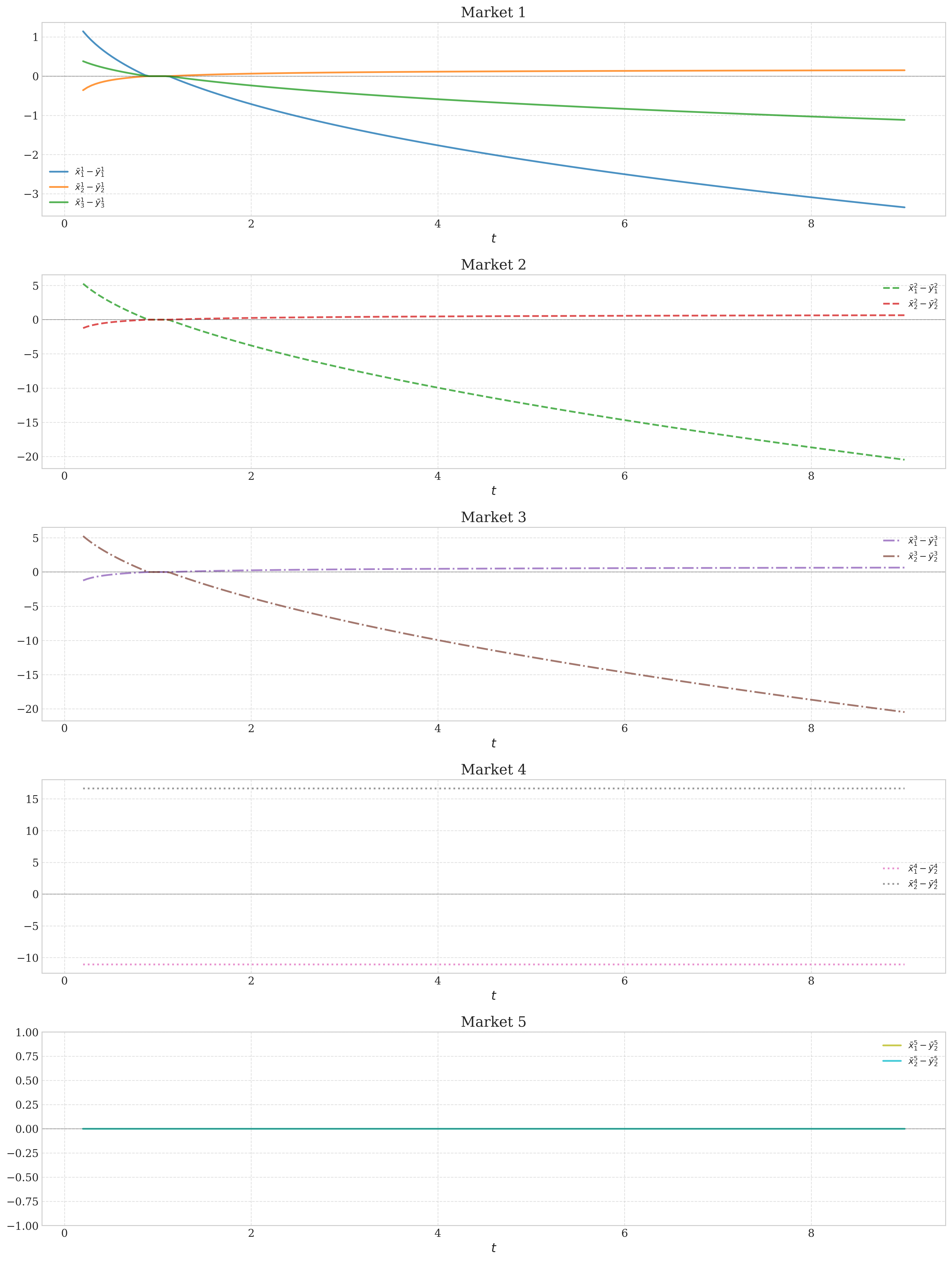}
\caption{Example 2: Optimal trades $(q_{i}=0.01)$.}
\label{fig:Ex2_trades_q001_B}
\end{figure}

\begin{figure}[htbp]
\centering
 \includegraphics[width=\linewidth,height=0.74\linewidth]{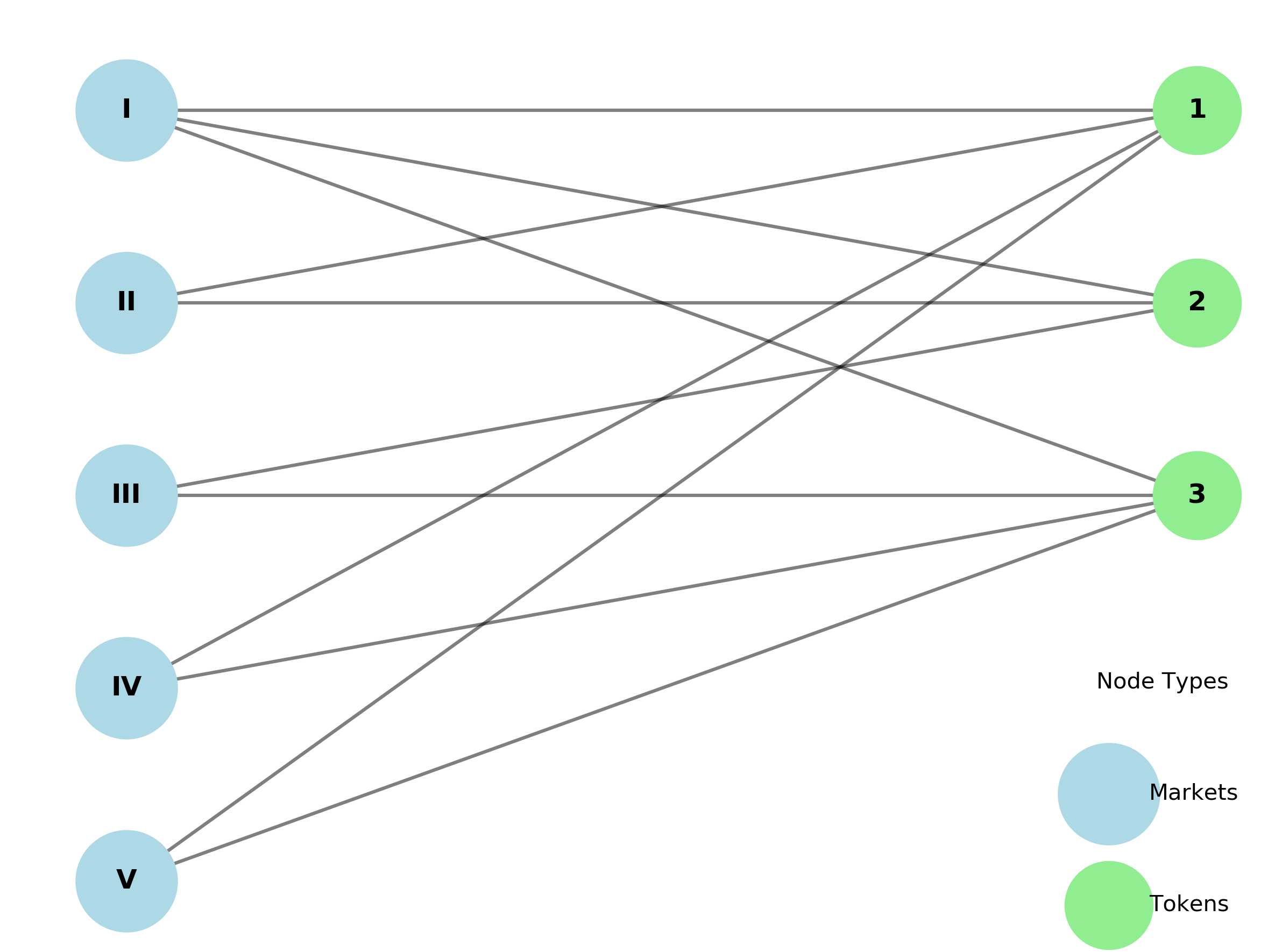}
\caption{Example 2: Market network topology.}
\label{fig:Ex2_topology}
\end{figure}

\begin{figure}[htbp]
\centering
 \includegraphics[width=\linewidth,height=0.6\linewidth]{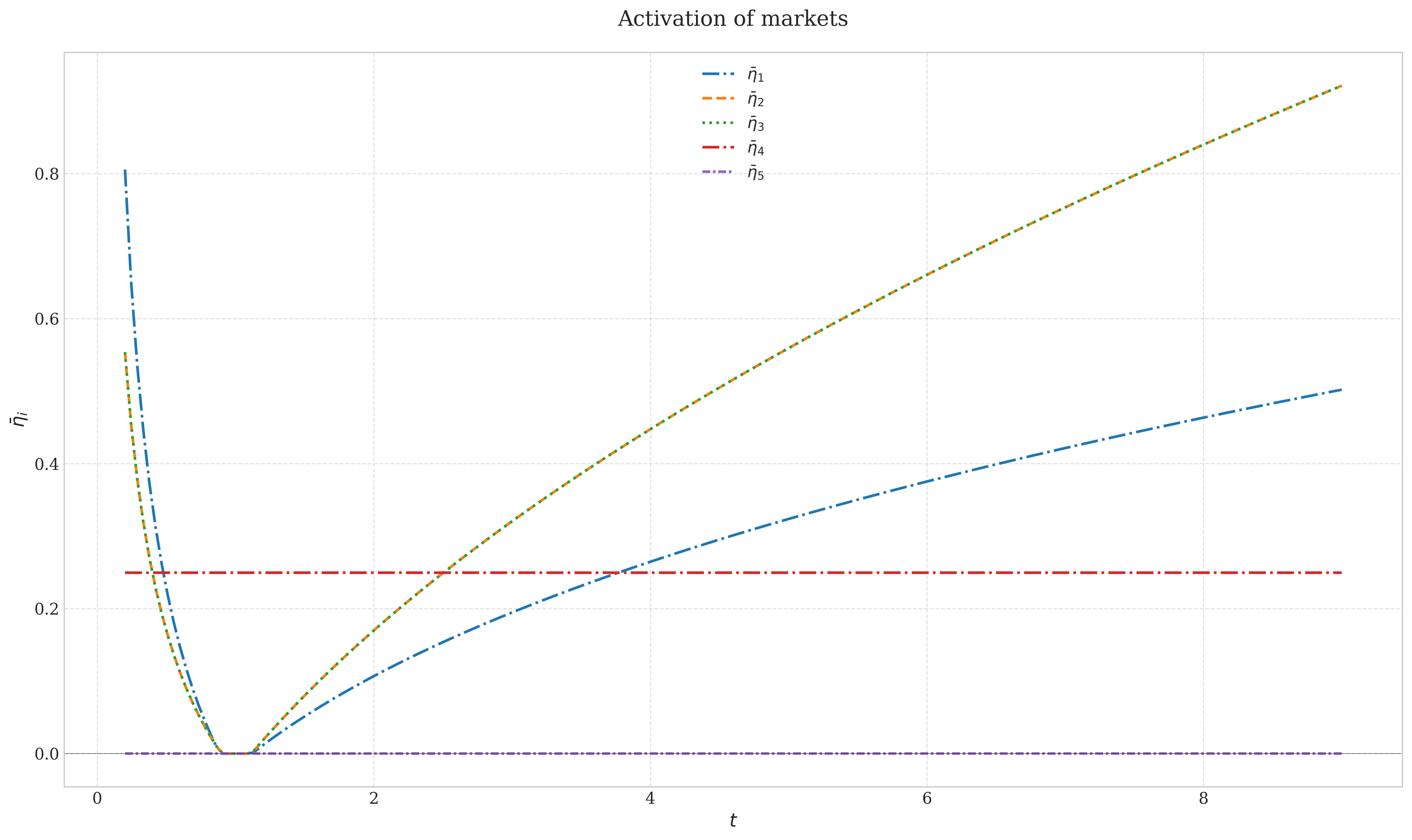}
\caption{Example 2: Activation of markets $(q_{i}=0.01)$.}
\label{fig:Ex2_activation_markets_q001}
\end{figure} 

\begin{figure}[htbp]
\centering
 \includegraphics[width=\linewidth,height=0.94\linewidth]{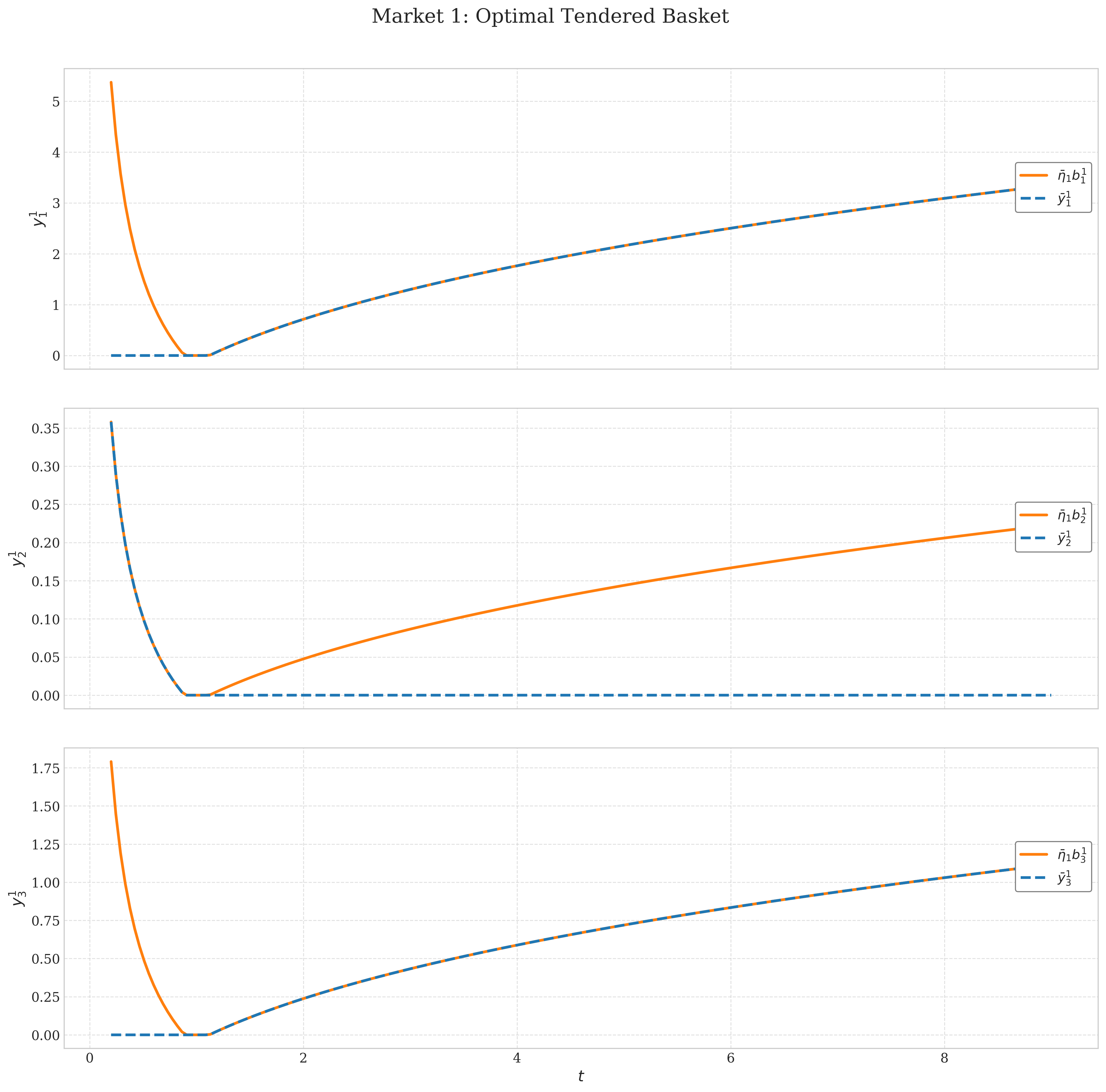}
\caption{Example 2: Optimal tendered basket in Market 1 $(q_{i}=0.01)$.}
\label{fig:Ex2_Market1_OptimalTenderedBasket_q001}
\end{figure}

\begin{figure}[htbp]
\centering
 \includegraphics[width=\linewidth,height=0.45\linewidth]{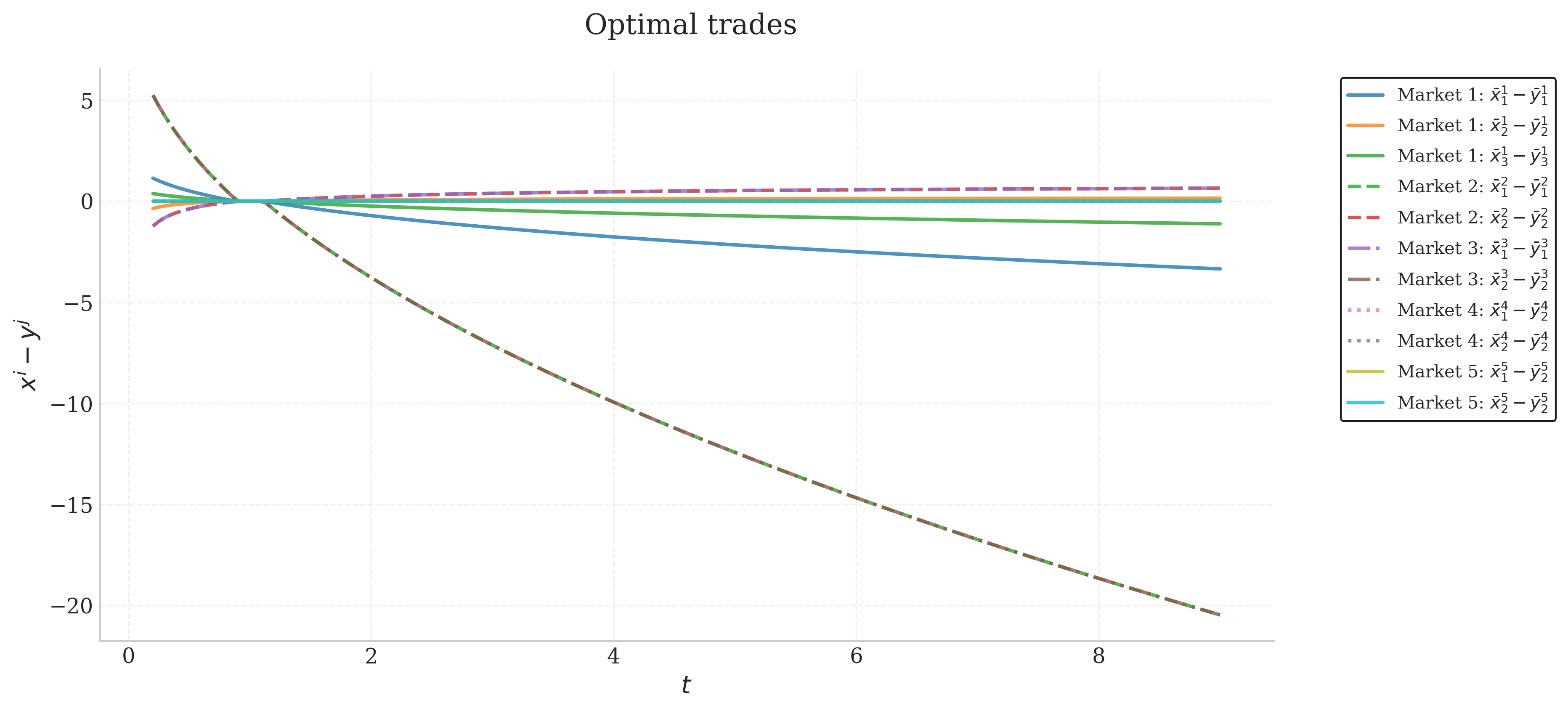}
\caption{Example 2: Optimal trades $(q_{i}=0.01,\ i\neq 4,q_{4}=9.4)$.}
\label{fig:Ex2_trades_q4_9}
\end{figure} 

\begin{figure}[htbp]
\centering
 \includegraphics[width=\linewidth,height=0.6\linewidth]{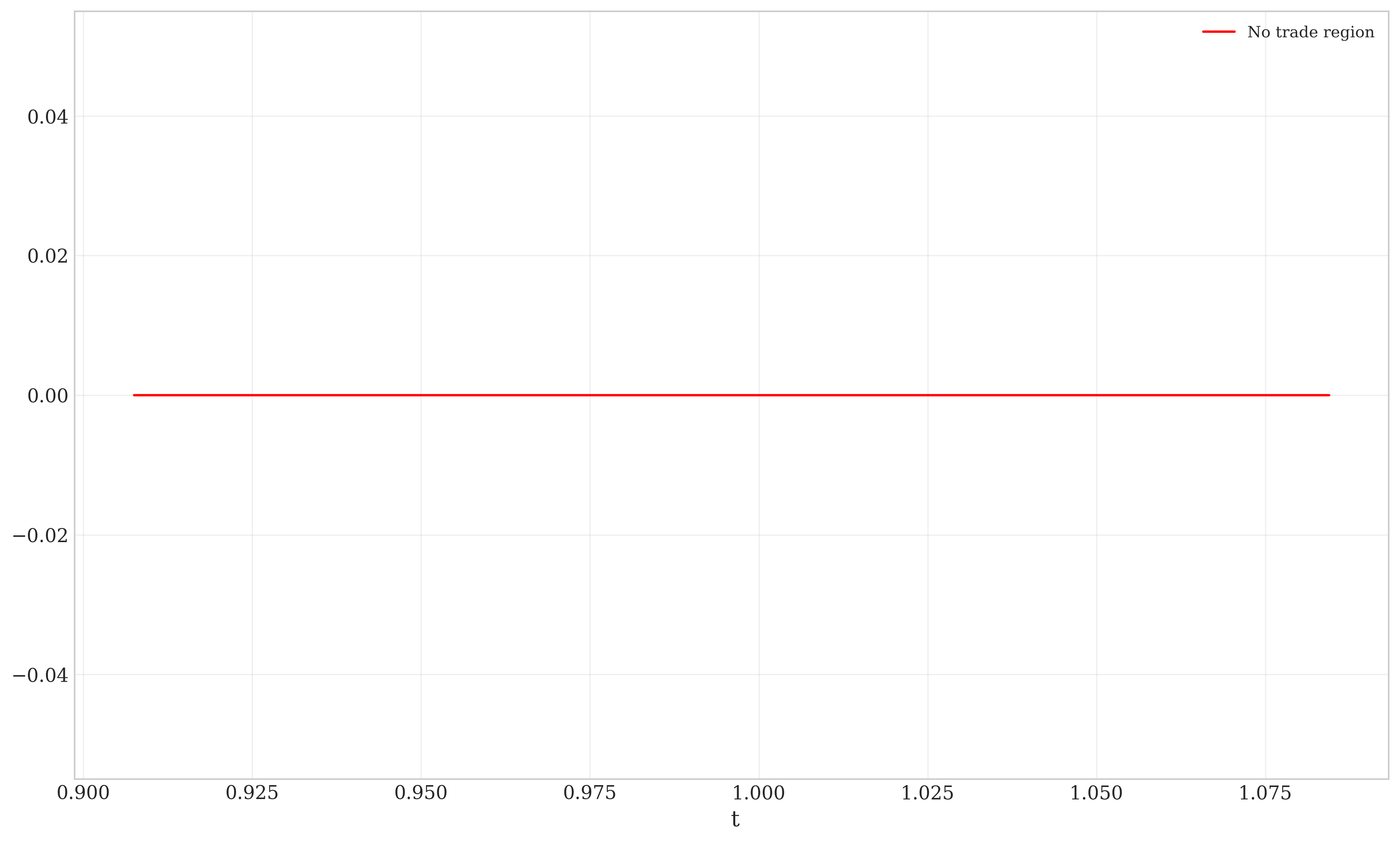}
\caption{Example 2: No-trade region $(q_{i}=0.01,\ i\neq 4;\ q_{4}=9.4)$.}
\label{fig:Ex_2_Notrade_region_q4_94}
\end{figure} 

\begin{table}[htbp]
\centering
\caption{Example 2: Local market data.}
\label{tab:Ex2}
\begin{tabular}{llll}
\hline
CFMM & Trading function $\varphi_{i}$ & Discount rate $\gamma_{i}$ & Local reserves $R^{i}$ \\ 
\hline
$I$   & $\varphi_{1}(R) = \left(R_{1}^{w_{1}}R_{2}^{w_{2}}R_{3}^{w_{3}}\right)^{1/(w_{1}+w_{2}+w_{3})}$ & 0.99 & $(3,0.2,1)$ \\
      & \textit{(weighted geometric mean)}, $w=(3,2,1)$ & & \\[0.5ex]
$II$  & $\varphi_{2}(R) = \sqrt{R_{1}R_{2}}$ & 0.99 & $(10,1)$ \\
      & \textit{(geometric mean)} & & \\[0.5ex]
$III$ & $\varphi_{3}(R) = \sqrt{R_{1}R_{2}}$ & 0.99 & $(1,10)$ \\
      & \textit{(geometric mean)} & & \\[0.5ex]
$IV$  & $\varphi_{4}(R) = \sqrt{R_{1}R_{2}}$ & 0.99 & $(20,50)$ \\
      & \textit{(geometric mean)} & & \\[0.5ex]
$V$   & $\varphi_{5}(R) = R_{1} + R_{2}$ & 0.99 & $(10,10)$ \\
      & \textit{(arithmetic mean)} & & \\
\hline
\end{tabular}
\end{table}

\begin{figure}[htbp]
    \centering
    \begin{subfigure}[b]{0.48\textwidth}
        \centering
        \includegraphics[width=\textwidth,height=0.66\linewidth]{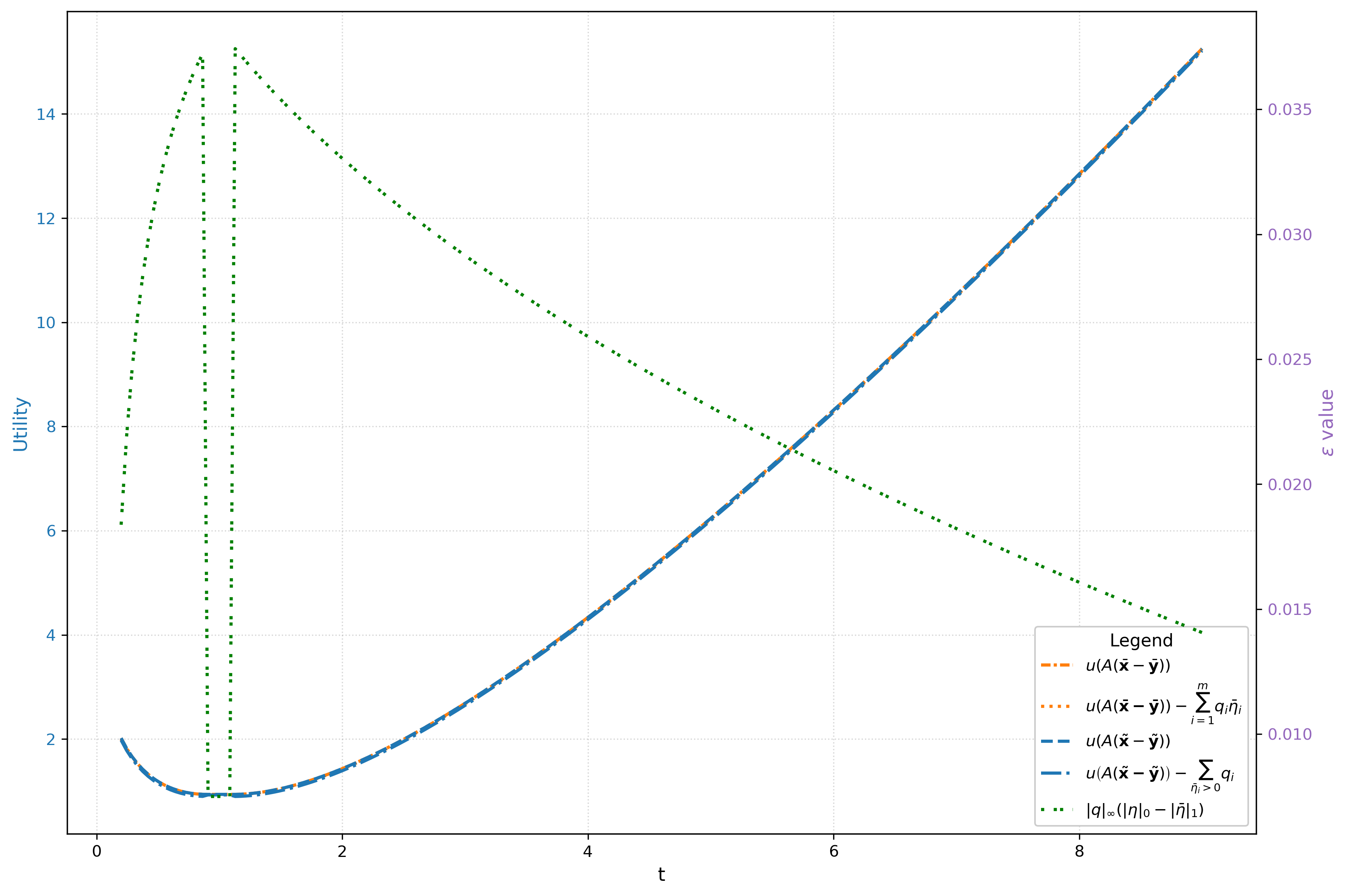}
        \caption{$q_i = 0.01$}
    \end{subfigure}
    \hfill
    \begin{subfigure}[b]{0.48\textwidth}
        \centering
        \includegraphics[width=\textwidth,height=0.66\linewidth]{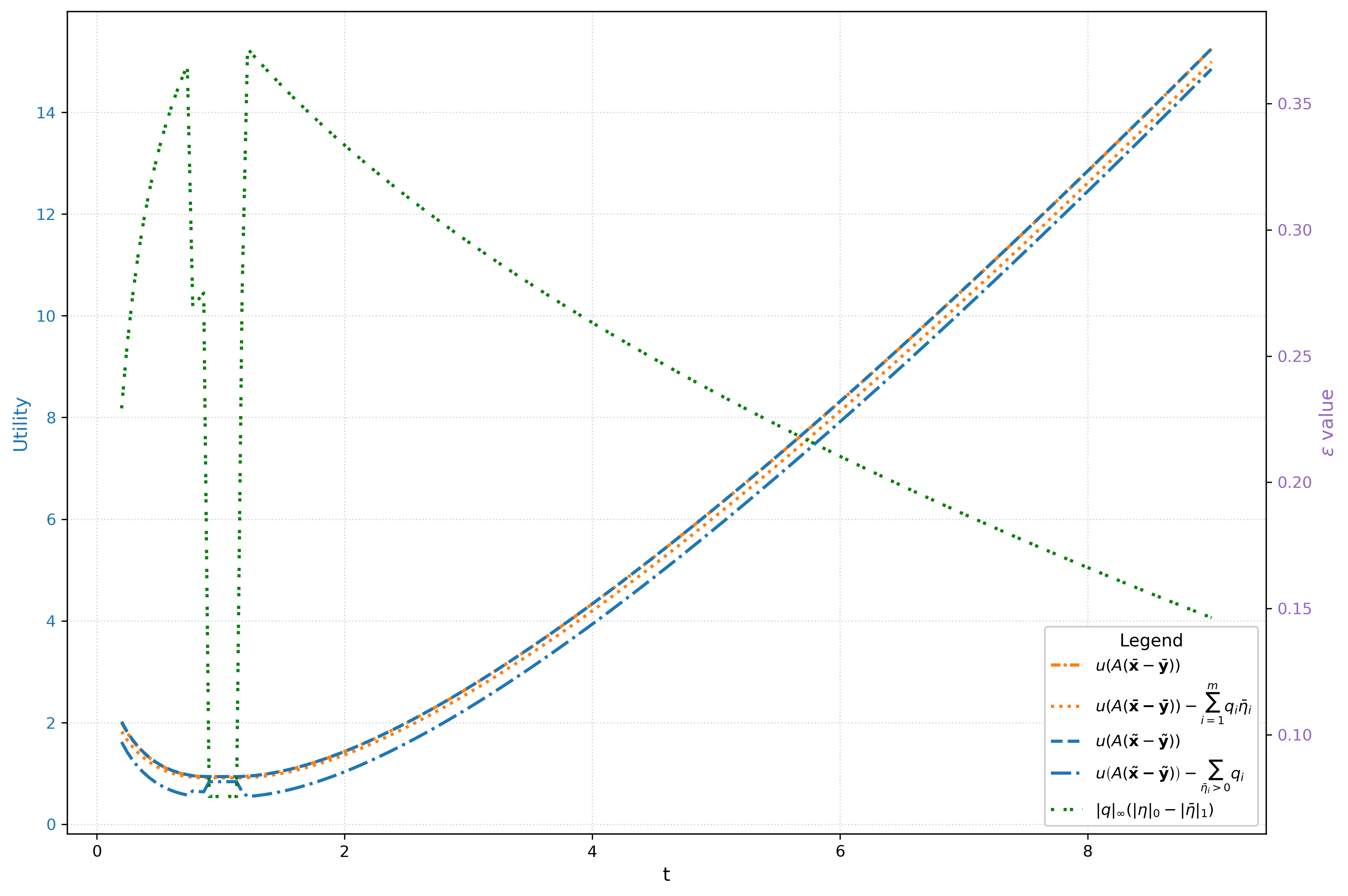}
        \caption{$q_i = 0.1$}
    \end{subfigure}
    
    \vspace{0.5cm} 
    
    \begin{subfigure}[b]{0.48\textwidth}
        \centering
        \includegraphics[width=\textwidth,height=0.66\linewidth ]{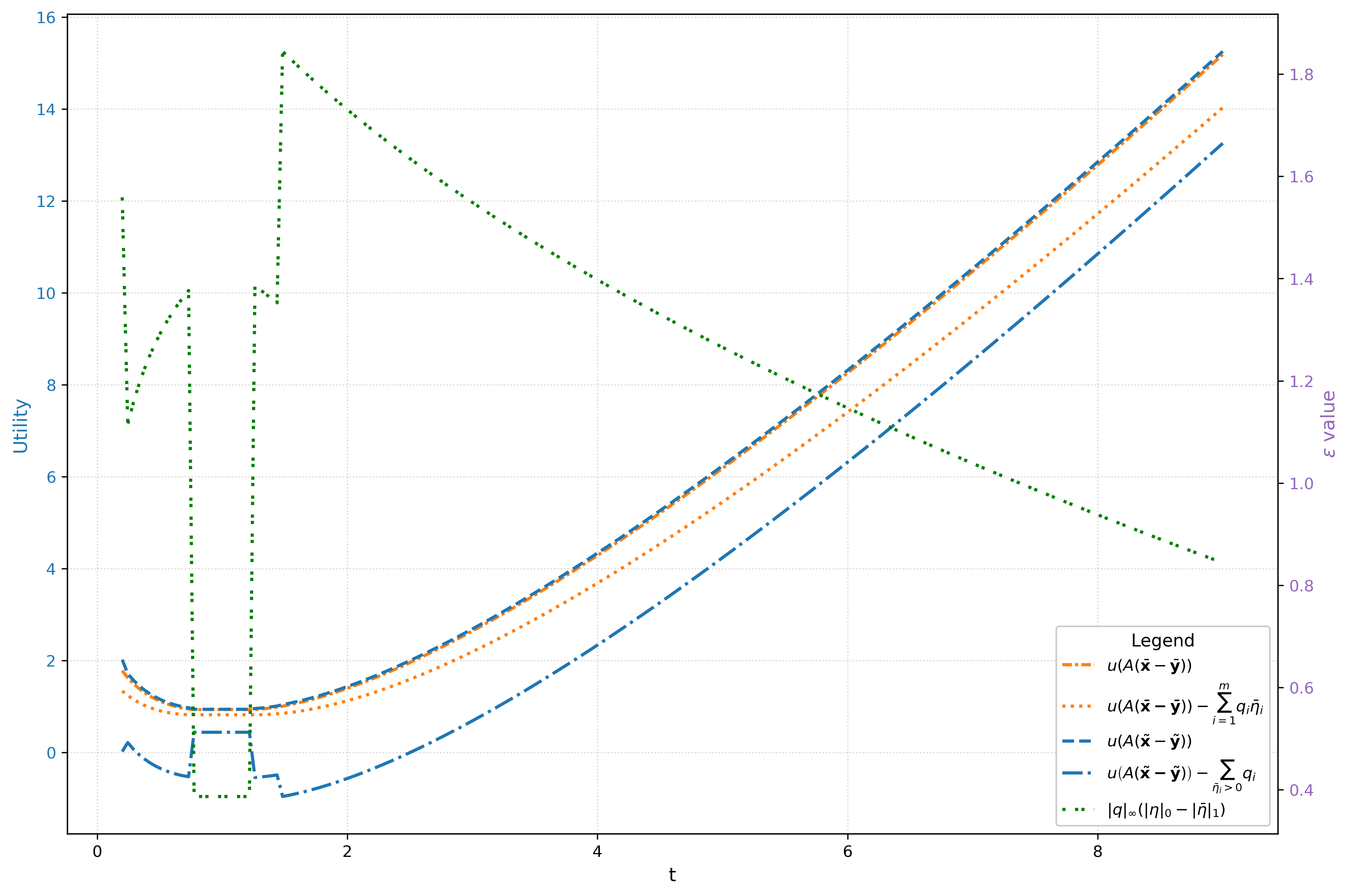}
        \caption{$q_i = 0.5$}
    \end{subfigure}
    \hfill
    \begin{subfigure}[b]{0.48\textwidth}
        \centering
        \includegraphics[width=\textwidth,height=0.66\linewidth]{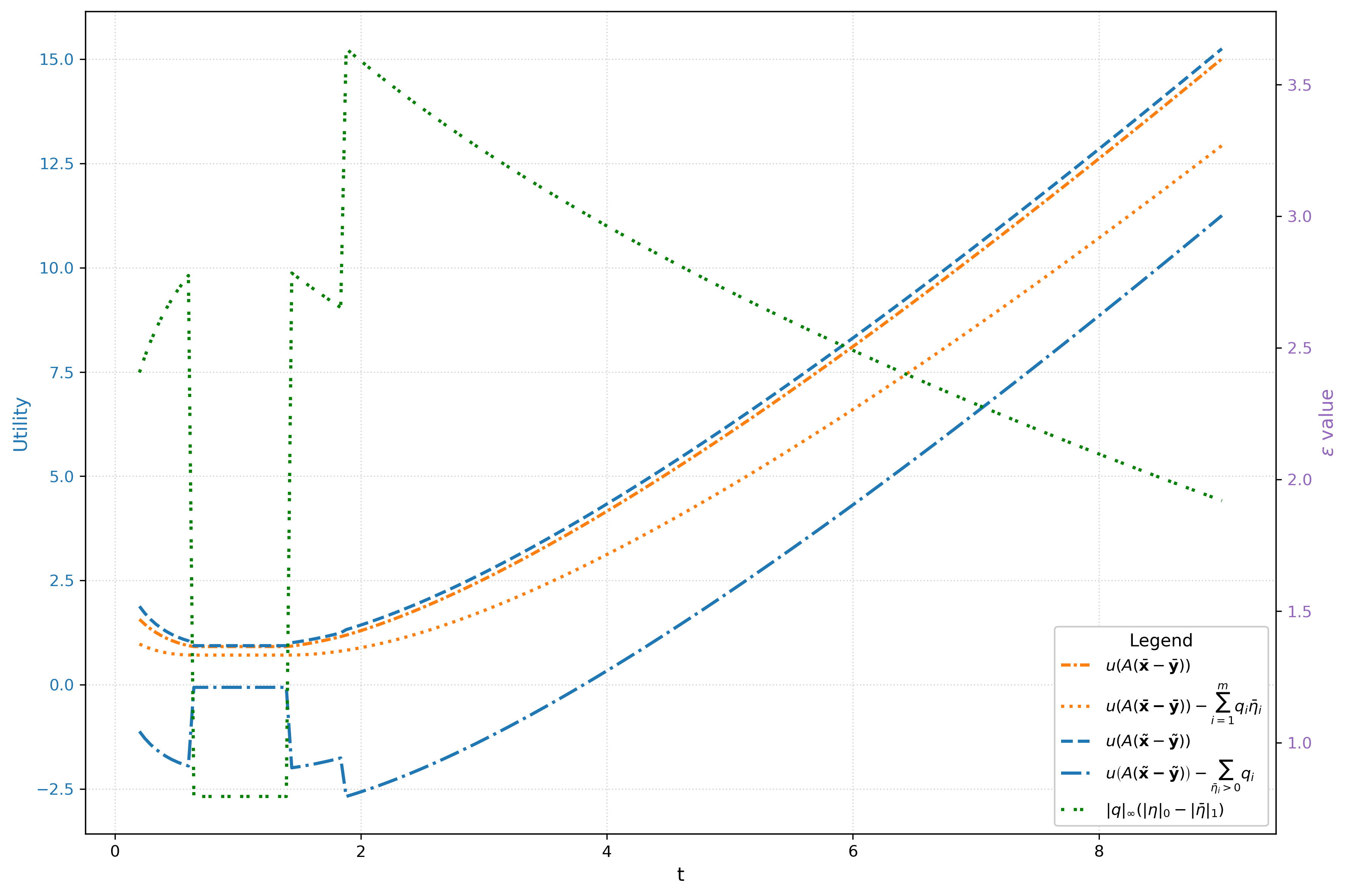}
        \caption{$q_i = 1$}
    \end{subfigure}
    
    \caption{Utility plots for different gas fee $q_i$ values.}
    \label{fig:Ex2_utility_combined}
\end{figure}

\begin{figure}[htbp]
    \centering
    \begin{subfigure}[b]{0.48\textwidth}
        \centering
        \includegraphics[width=\textwidth,height=1.7\linewidth]{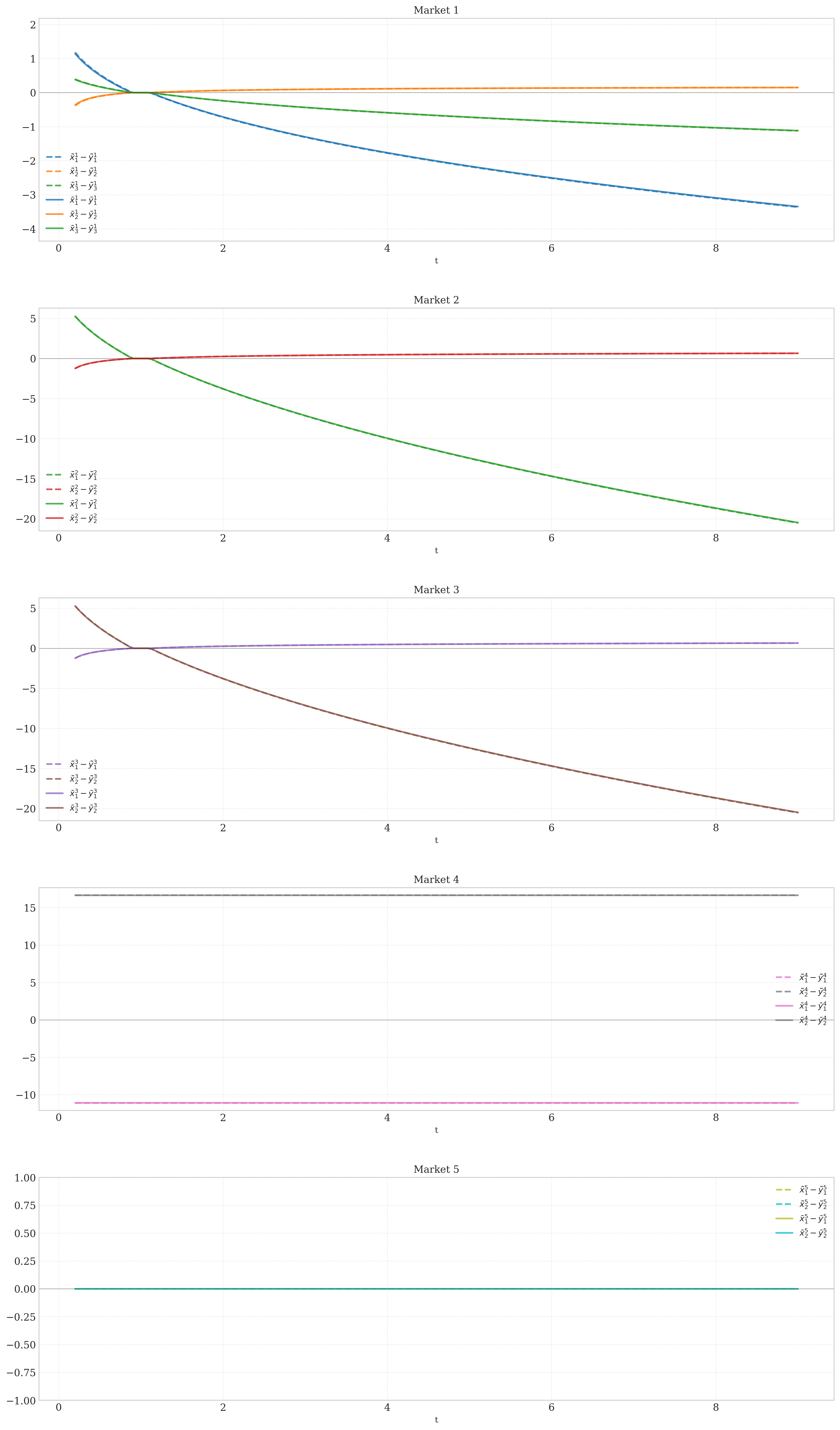}
        \caption{$q_i = 0.01$}
    \end{subfigure}
    \hfill
    \begin{subfigure}[b]{0.48\textwidth}
        \centering
        \includegraphics[width=\textwidth,height=1.7\linewidth]{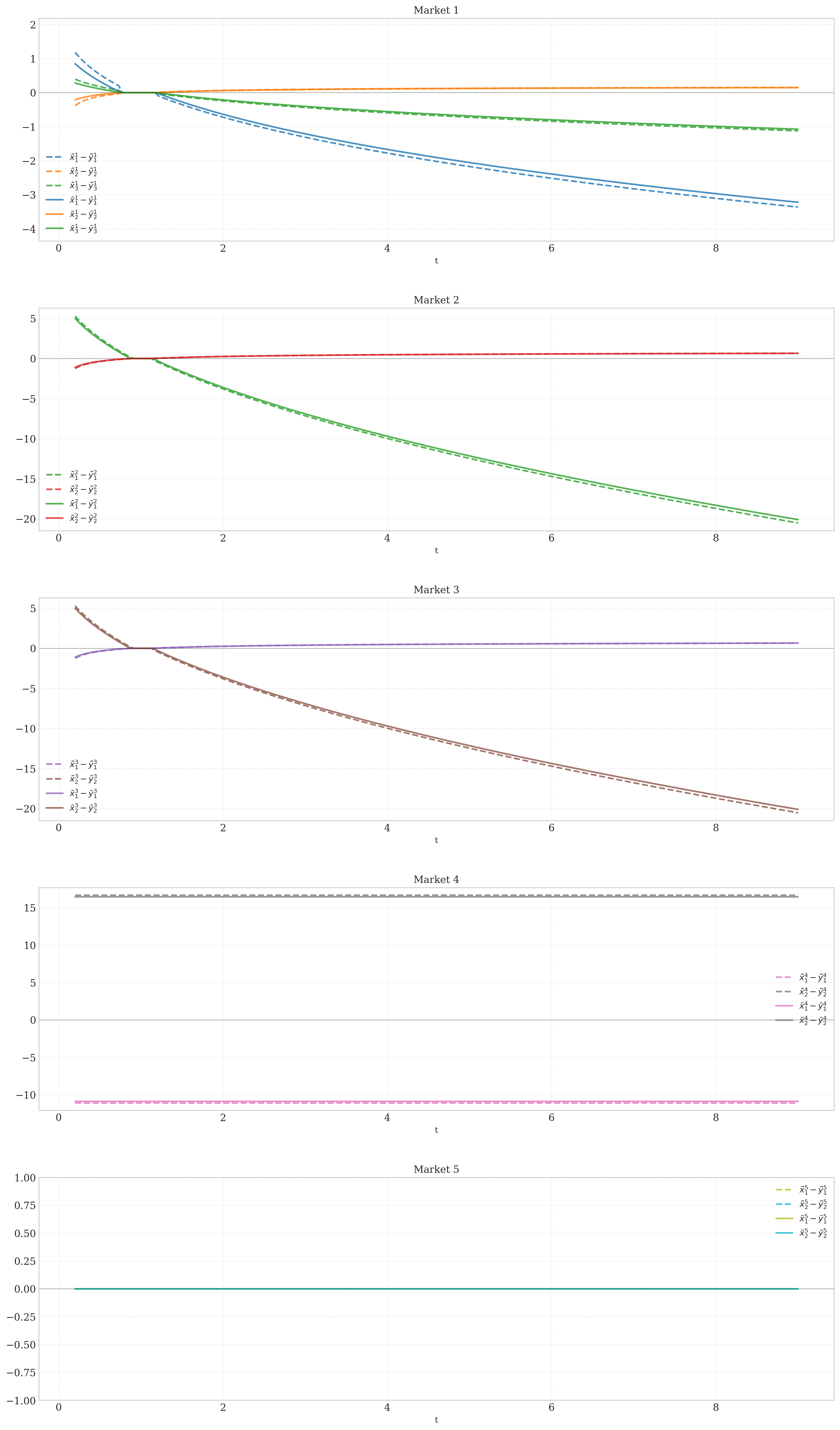}
        \caption{$q_i = 0.1$}
    \end{subfigure}
    
    \vspace{0.5cm} 
    
    \begin{subfigure}[b]{0.48\textwidth}
        \centering
        \includegraphics[width=\textwidth,height=1.7\linewidth]{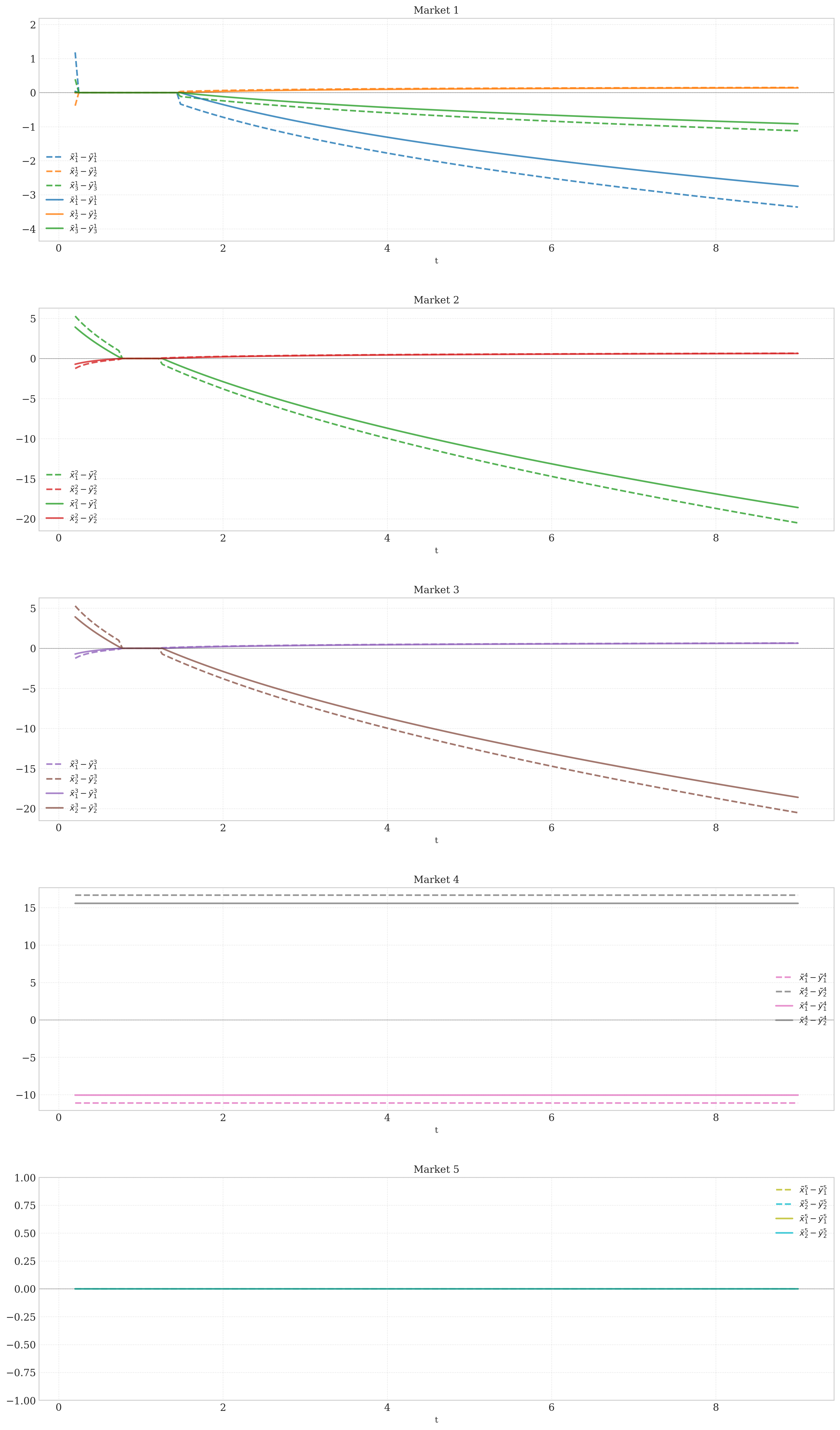}
        \caption{$q_i = 0.5$}
    \end{subfigure}
    \hfill
    \begin{subfigure}[b]{0.48\textwidth}
        \centering
        \includegraphics[width=\textwidth,height=1.7\linewidth]{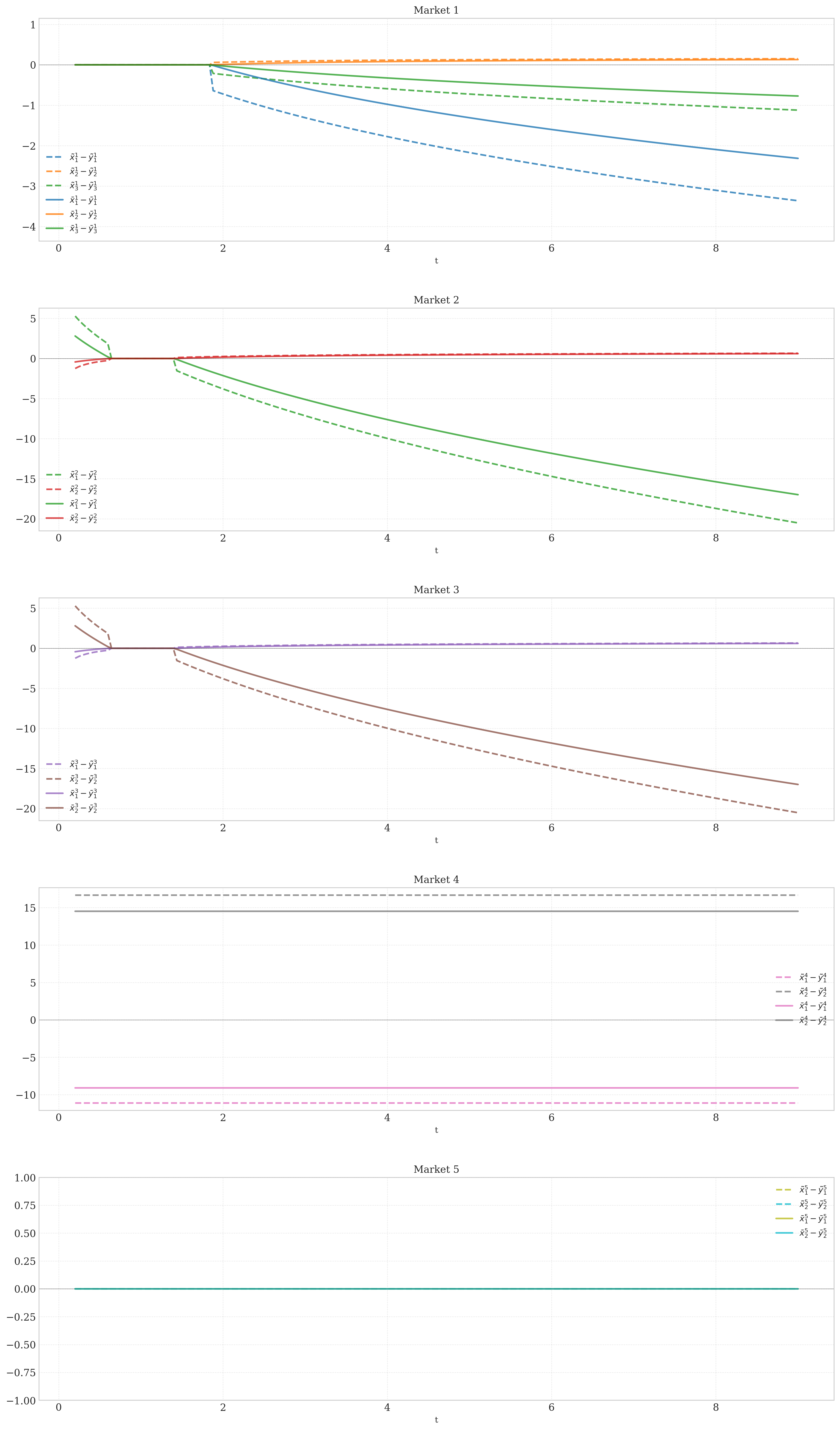}
        \caption{$q_i = 1$}
    \end{subfigure}
    
    \caption{Trade comparisons for different gas fee $q_i$ values.}
    \label{fig:Ex2_trades_comparison}
\end{figure}

\section{Conclusions}

We have studied the optimal routing problem across multiple heterogeneous CFMMs in the presence of fixed execution costs modeling gas fees. The resulting optimization problem departs from classical
convex formulations by combining nonconvex trading constraints with market
activation decisions, which are introduced to enforce the effect of those fees.

Our first contribution has been the derivation of necessary optimality conditions for a relaxation approximation of the routing problem under mild regularity assumptions.
By establishing a KKT system under an appropriate KRZ constraint qualification, we obtained an explicit
characterization of optimal trades linking marginal utilities, pool prices,
fees, and activation variables. This analysis highlights the role of gas fees in
expanding the no-trade region and in shaping optimal routing decisions across
fragmented liquidity pools.

We have further shown that global optimality can be characterized without
imposing global convexity assumptions on the invariant functions. Using tools
from generalized convexity, we established sufficient optimality conditions
under pseudoconcavity of the utilities and quasilinearity of the trade functions. This result extends existing CFMM routing theory beyond convex
models, allowing for a broader class of admissible invariant functions while
preserving optimality.

A rigorous comparison between the relaxed routing problem and the original
mixed-integer formulation with fixed activation costs was also provided.
Explicit approximation bounds were derived, quantifying the utility gap induced
by relaxation and clarifying the conditions under which relaxed solutions offer
meaningful approximations to the original problem. This analysis also provides a
theoretical justification for relaxation-based approaches potentially usable in related questions.

From a financial perspective, our results offer a mathematical interpretation of
how fixed execution costs suppress arbitrage opportunities in decentralized exchanges. The framework developed in this paper thus contributes to a deeper understanding of no-trade conditions and execution frictions in DeFi markets.

All in all, our results contribute to the growing literature
that bridges optimization theory and decentralized market design, showing how
execution frictions fundamentally reshape optimal trading behavior even in
deterministic, on-chain environments.
Several directions for future research naturally arise from them. Extensions to stochastic utility formulations or dynamic routing under uncertainty would further enrich the model. Additionally, the integration of alternative execution constraints or protocol-specific
features may provide further insights into optimal trading and market design in decentralized financial systems.

\bigskip

\noindent \textbf{Acknowledgements:}
Part of this work was carried out when the authors were visiting the Vietnam Institute for Advanced Study in Mathematics (VIASM) throughout March and April 2025; the authors would like to thank VIASM for its hospitality during this period. This research has been partially supported by ANID--Chile under project Fondecyt Regular 1241040 (Lara), by MICIU / AEI / 10.13039 / 501100011033 / and by ERDF, EU (Projects PID2024-156273NA-I00 (Sama) and PID2024-158823NB-I00 (Escudero)), by MICIU / AEI / 10.13039 / 501100011033 / European Union NextGenerationEU / PRTR (Project CPP2024-011557 (Escudero and Sama)), and by COST (European Cooperation in Science and Technology) through COST Action Stochastica CA24104 (Escudero).

\end{document}